\definecolor{MyLinkColor}{rgb}{0,0,0.4}
\newcommand{\vdiv}{\mathop{\rm div}}
\newcommand{\R}{{\mathbb R}}
\newcommand{\RRM}{{\mathbb R}}
\newcommand{\TTM}{{\mathbb T}}
\newcommand{\bA}{{\mathbb A}}
\newcommand{\bB}{\mathbb{B}}
\newcommand{\bD}{\mathbb{D}}
\newcommand{\kB}{\mathcal{B}}
\newcommand{\N}{{\mathbb N}}
\newcommand{\clm}{{\mathcal M}}
\newcommand{\kL}{\mathcal{L}}
\newcommand{\cP}{\mathcal{P}}
\newcommand{\cU}{\mathcal{U}}
\newcommand{\clu}{\mathcal{U}}
\newcommand{\clp}{\mathcal{P}}
\newcommand{\clw}{\mathcal{W}}
\newcommand{\clq}{\mathcal{Q}}
\newcommand{\clz}{\mathcal{Z}}
\newcommand{\oo}{\overline\omega}
\newcommand{\re}{\mathop{\rm Re}\nolimits}
\newcommand{\PV}{\mathop{\rm PV}\nolimits}
\newcommand{\ov}{\overline}
\newcommand{\p}{\partial}
\newcommand{\e}{\varepsilon}
\newcommand{\0}{\Omega}
\newcommand{\G}{\Gamma}
\newcommand{\supp}{\mathop{\rm supp}\nolimits}
\newcommand{\be}{\begin{equation}}
\newcommand{\ee}{\end{equation}}
\newcommand{\rot}{\mathop{\rm rot}\nolimits}
\newtheorem{thm}{Theorem}[section]
\newtheorem{prop}[thm]{Proposition}
\newtheorem{lemma}[thm]{Lemma}
\newtheorem{cor}[thm]{Corollary}
\theoremstyle{remark} 
\newtheorem{rem}[thm]{Remark}
\numberwithin{equation}{section}
\begin{document}

\title[Two-phase Stokes flow]{Two-phase Stokes flow by capillarity in the plane:\\ the case of different viscosities}

\author{Bogdan--Vasile Matioc}
\address{Fakult\"at f\"ur Mathematik, Universit\"at Regensburg,   93040 Regensburg, Deutschland.}
\email{bogdan.matioc@ur.de}
  
\author{Georg Prokert}
\address{ Faculty of Mathematics and Computer Science, Technical
University Eindhoven,   The Netherlands.}
\email{g.prokert@tue.nl}

%%% NEED TO ADD MSC %%%
\subjclass[2010]{35R37; 76D07; 35K55}
\keywords{Stokes problem; Two-phase; Singular integrals; Contour integral formulation.}

\begin{abstract}
We study the two-phase Stokes flow driven by surface tension for two fluids of different viscosities, separated by an asymptotically flat interface 
 representable as graph of a differentiable function.
 The flow is assumed to be two-dimensional with the fluids filling the entire space.
 We prove well-posedness and parabolic smoothing in Sobolev spaces up to critical regularity. The main technical tools are an analysis of nonlinear singular integral operators arising from the hydrodynamic single and double layer potential, spectral results on the corresponding integral operators,  and abstract results on nonlinear parabolic evolution equations. 
\end{abstract}

\maketitle

\section{Introduction}

 In the context of boundary value problems involving elliptic constant-coefficient PDE's like the Laplace equation or the Stokes system, it is often natural to consider two-phase problems in unbounded domains, where the same equation has to be solved on both sides of the  boundary, and the boundary conditions typically are of ``transmission'' type, i.e. they relate limits of the solutions from both sides. The method of layer potentials is a classical technique which is intrinsically suited to such settings. Typically, this method reduces the boundary value problem to a linear, singular integral equation (or system of such equations) on the boundary of the domain, on the basis of well-known jump relations for these potentials across the boundary.

The first applications of layer potentials in the analysis of moving boundary problems of the type described above are from the 1980s, 
for problems of Hele-Shaw or Muskat type~\cite{DR84} (see also the recent surveys~\cite{G17, GL20} on further developments) as well as for Stokes flow problems~\cite{BaDu98}. In these applications, the interfaces are represented as graphs of a time dependent function~${[f\mapsto f(t)]}$, with~${f(t)\in{\rm C}(\R)},$ for which an evolution equation can be derived. 
This equation involves singular integral operators originating from the layer potential, depending nonlinearly and nonlocally on~$f(t)$. 
However, in suitable geometries this nonlinearity can be described rather explicitly, and technicalities resulting from transforming the problem to a fixed reference domain can be avoided. More precisely, the operators determining the evolution belong to a class discussed in Section \ref{Sec:10} below, and results are available concerning mapping properties, smoothness, localization etc. of the operators in this class.

  After reducing the moving boundary problem to an evolution equation for $f$, this equation has to be analyzed. 
   Initially, various approaches  have been used that necessitated rather restrictive assumptions on the initial data.
 Recently, however, more general, in some sense optimal existence, uniqueness, and smoothness results have been obtained. One of the crucial tools for this has been the meanwhile well-developed and versatile abstract theory of nonlinear parabolic evolution equations, cf. \cite{L95, Am93, PS16}.

This paper discusses, along the lines sketched above, the moving boundary problem of two-phase Stokes flow in full 2D space driven by surface tension forces on the interface between the two phases. 
More precisely, we seek a moving interface $[t\mapsto\Gamma(t)]$ between two liquid phases~$\Omega^\pm(t)$, and corresponding functions 
\begin{align*}
    v^\pm(t):\Omega^\pm(t)\longrightarrow\RRM^2\qquad\text{and}\qquad p^\pm(t):\Omega^\pm(t)\longrightarrow\RRM,
\end{align*}
representing the velocity and pressure fields in $\Omega^\pm(t)$, respectively, such that the following equations are satisfied:
\begin{subequations}\label{STOKES}
\begin{equation}\label{probint}
\left.
\begin{array}{rclll}
\mu^\pm\Delta v^\pm-\nabla p^\pm&=&0&\mbox{in $\Omega^\pm(t)$,}\\
\vdiv v^\pm&=&0&\mbox{in $\Omega^\pm(t)$,}\\{}
[v]&=&0&\mbox{on $\Gamma(t)$,}\\{}
[T_\mu(v, p)]\tilde\nu&=&-\sigma\tilde\kappa\tilde\nu&\mbox{on $\Gamma(t)$,}\\
(v^\pm, p^\pm)(x)&\to&0&\mbox{for $|x|\to\infty$, $x\in\Omega^\pm(t)$}\\
V_n&=& v^\pm\cdot\tilde \nu&\mbox{on $\Gamma(t)$.}
\end{array}\right\}
\end{equation}
Here $\tilde\nu$ is the unit exterior normal to~$\p\Omega^-(t)$ and $\tilde\kappa$ denotes the curvature of the interface.  
Moreover, $T_\mu(v,p)=(T_{\mu,ij}(v,p))_{1\leq i,\, j\leq 2}$ denotes the stress tensor  that is given by
\begin{align}\label{defT}
T_{\mu,ij}(v, p):=- p\delta_{ij}+\mu(\partial_iv_j+\partial_j v_i),
\end{align}
 and $[v]$ (respectively $[T_\mu(v, p)]$) is the jump of the velocity (respectively stress tensor) across the moving interface, see \eqref{defjump}  below.
 The positive constants~$\mu^\pm$ and~$\sigma$  denote the viscosity of the liquids in the two phases and the surface tension coefficient of the interface, respectively. 
 We assume that 
 $$ \Gamma(t)=\partial\Omega^+(t)=\p\Omega^-(t), \qquad\Omega^+(t)\cup\Omega^-(t)\cup \Gamma(t)=\mathbb{R}^2, \qquad
 \Gamma(t)=\mathop{\rm graph} f(t)$$
 so that $\Gamma(t)$ is a graph over  the real line. 
 Equation \eqref{probint}$_6$  determines the motion of the interface by prescribing  its normal velocity $V_n$ as coinciding with the
  normal component of the velocity at $\Gamma(t)$, i.e. the interface is transported by the liquid flow. 
 The interface $\Gamma(t)$ is assumed to be known at time $t=0$:
 \begin{align}\label{IC}
 f(0)=f_0.
 \end{align}
 \end{subequations}
 
 In the previous paper \cite{MP2021}, the authors considered Problem \eqref{probint} in the case of equal viscosities $\mu^\pm=\mu$.
  In that case, the solution to the fixed-time problem \eqref{probint}$_1$--\eqref{probint}$_5$ can be directly represented as a 
  hydrodynamic single-layer potential \cite{Lad63} with density $-\sigma\tilde\kappa\tilde\nu$, and the 
  resulting evolution equation represents the time derivative of $f$ as a nonlinear singular integral operator acting on $f$. 
 
 If $\mu^+\neq\mu^-$ this is not feasible. Instead, we first transform the unknowns such that the same equation holds in both phases, introducing thereby 
 a jump across the interface for the transformed velocity field.  
 In Proposition \ref{P:STOsol}, we show that the corresponding fixed-time Stokes problem is uniquely solvable, and we represent the solution
 by  a sum of a hydrodynamic single layer and a double layer potential.
 While the single layer potential is generated by the same density as in the case of equal viscosities, the density $\beta$ for the double layer potential is
   found from solving a linear, singular integral equation of the second kind,  cf. \eqref{TOSO}.
   As~${\Gamma(t)}$ is unbounded we cannot rely on compactness arguments to show the solvability of this equation. 
  Instead, we modify arguments from \cite{FKV88,CP09} to obtain the necessary information on the spectrum of the corresponding integral operator via a Rellich identity.
   Moreover, we also rely on  a further Rellich identity used in \cite{MBV18} in the study of the Muskat problem.
   
 The solution to the fixed-time problem is then used in the formulation of an evolution equation for $f$, (cf. \eqref{evol0}, \eqref{NNEP}, \eqref{PHI})
   \[\frac{df}{dt}(t)=\Phi(f(t)),\quad t\geq 0,\quad f(0)=f_0,\]
   whose investigation will yield the following main result. 
 Here and further,~${H^s(\R):=W^{s}_2(\R)}$ denotes the usual Sobolev spaces of integer or noninteger order.
 
\begin{thm}\label{MT1} Let  $s\in(3/2,2) $ be given.
Then, the following  statements hold true:
\begin{itemize}
\item[(i)]  {\em (Well-posedness)}  Given $f_0\in H^{s}(\mathbb{R})$, there exists a unique maximal solution~$(f,v^\pm,p^\pm)$   to \eqref{STOKES} such that
\begin{itemize}
\item[$\bullet$] $f=f(\cdot;f_0)\in {\rm C}([0,T_+),  H^{s}(\mathbb{R}))\cap {\rm C}^1([0,T_+), H^{s-1}(\mathbb{R})),$\\[-2ex]
\item[$\bullet$] $v^\pm(t)\in {\rm C}^2(\Omega^\pm(t))\cap {\rm C}^1(\overline{\Omega^\pm(t)})$, $p^\pm(t)\in {\rm C}^1(\Omega^\pm(t))\cap {\rm C}(\overline{\Omega^\pm(t)})$ for all ${t\in(0,T_+)}$,\\[-2ex]
\item[$\bullet$] $ v(t)^\pm|_{\G(t)}\circ\Xi(f(t))\in H^2(\R)^2$ for all $t\in(0,T_+)$,\\[-2ex]
\end{itemize}
where $T_+=T_+(f_0)\in (0,\infty]$ and $\Xi(f(t))(\xi):=(\xi, f(t)(\xi))$, $\xi\in\R.$\\ 
 Moreover, the set $$\clm:=\{(t,f_0)\,|\,f_0\in H^s(\RRM),\,0< t<T_+(f_0)\}$$ is open in 
$(0,\infty)\times H^s(\R)$, and $[(t,f_0)\mapsto f(t;f_0)]$  is a semiflow on $H^s(\R)$ which is smooth in $\clm$.\\[-2.2ex]
\item[(ii)]  {\em (Parabolic smoothing)} \\[-2ex]
\begin{itemize}
\item[(iia)]  The map $[(t,\xi)\mapsto  f(t)(\xi)]:(0,T_+)\times\mathbb{R}\longrightarrow\mathbb{R}$ is a ${\rm C}^\infty$-function. \\[-2ex]
\item[(iib)] For any $k\in\N$, we have $f\in {\rm C}^\infty ((0,T_+), H^k(\mathbb{R})).$\\[-2ex]
\end{itemize} 
\item[(iii)]  {\em (Global existence)} If 
$$\sup_{[0,T]\cap [0,T_+(f_0))} \|f(t)\|_{H^s}<\infty$$
for each $T>0$, then $T_+(f_0)=\infty.$
\end{itemize} 
\end{thm}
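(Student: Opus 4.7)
The plan is to recast \eqref{STOKES} as the abstract quasilinear Cauchy problem $\dot f=\Phi(f)$, $f(0)=f_0$, on the pair $(H^{s-1}(\R),H^s(\R))$ and apply the general theory of nonlinear parabolic evolution equations from \cite{L95, Am93, PS16}. The starting point is Proposition \ref{P:STOsol}, together with the spectral information on the double-layer operator obtained via the Rellich identities, which together identify $\Phi(f)$ with the normal component of the single/double layer solution evaluated at the interface. The first step is to show that $\Phi\in\mathrm{C}^\omega(H^s,H^{s-1})$; this is reduced to smoothness of the nonlinear singular integral operators of the class discussed in Section \ref{Sec:10}, applied to $f$, its derivatives, and to the density $\beta=\beta(f)$, whose analyticity in $f$ follows from the implicit function theorem using the invertibility of the second-kind integral operator from \eqref{TOSO}.

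The second and crucial step is to verify the generator property $\partial\Phi(f_0)\in\kH(H^s(\R),H^{s-1}(\R))$ for every $f_0\in H^s(\R)$. The strategy I would follow is the \emph{localization/freezing-coefficients} technique familiar from the equal-viscosity paper \cite{MP2021}. Namely, using a finite partition of unity $\{\pi_j\}$ adapted to a small length scale $\e$, I would compare $\partial\Phi(f_0)$ with a finite family of Fourier multipliers $\bA_j$ obtained by freezing the coefficients of $\partial\Phi(f_0)$ (essentially $f_0'$ at a base point) and show
\begin{equation*}
 \|\partial\Phi(f_0)[h]-\textstyle\sum_j \pi_j \bA_j[\pi_j h]\|_{H^{s-1}} \leq \eta\|h\|_{H^s}+C(\eta)\|h\|_{H^{s-1}}
\end{equation*}
for arbitrary $\eta>0$. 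Each frozen symbol $\bA_j$ should be seen to correspond to a first-order, negative, translation-invariant operator, hence to generate an analytic semigroup on $H^{s-1}(\R)$ with domain $H^s(\R)$; the expected symbol is a positive multiple of $-|\xi|$, consistent with the parabolic scaling of surface-tension-driven Stokes flow. Combining these local estimates with a standard perturbation argument yields the generator property on the whole line. This identification of the principal symbol, and in particular showing that the viscosity jump does not destroy parabolicity, is where I expect the hardest technical work to lie, since it requires a careful commutator analysis for the singular integral operators and an inversion of $\mathrm{id}+$(frozen double-layer) at the symbol level.

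Once $\Phi\in\mathrm{C}^\omega$ and $\partial\Phi(f_0)\in\kH(H^s,H^{s-1})$ are established, part (i) follows directly from the abstract well-posedness theorem for quasilinear parabolic problems (e.g.\ Theorem 8.4.1 in \cite{L95}), which simultaneously delivers the maximal time $T_+(f_0)$, the openness of $\clm$, and the smoothness of the semiflow on $\clm$. The regularity of $v^\pm, p^\pm$ stated in (i) is a consequence of elliptic regularity for the Stokes system applied to the layer-potential representation of Proposition \ref{P:STOsol}, using that $f(t)\in H^s$ and $v^\pm(t)|_{\G(t)}\circ\Xi(f(t))\in H^2(\R)^2$ via the density $\beta(f(t))$.

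For part (ii), I would use the translation-invariance trick: introduce the parameter family $f_\lambda(t)(\xi):=f(t)(\xi+\lambda)$, observe that $f_\lambda$ solves the same evolution equation with datum $f_0(\cdot+\lambda)$, and, using an implicit-function-theorem argument on the associated parameter-dependent abstract equation in $\clm$, conclude that $\lambda\mapsto f_\lambda$ is smooth into $\mathrm{C}([\e,T],H^s)\cap \mathrm{C}^1([\e,T],H^{s-1})$ for each $0<\e<T<T_+$. Evaluating at $\lambda=0$ gives arbitrarily many spatial derivatives in $H^{s-1}$, hence $f\in \mathrm{C}^\infty((0,T_+),H^k)$ for every $k$ by Sobolev embedding and bootstrap, proving (iia) and (iib). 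Finally, (iii) is the standard continuation criterion for maximal solutions of the quasilinear problem: if the $H^s$-norm does not blow up on any finite interval, then the solution extends beyond, contradicting maximality unless $T_+=\infty$; this is a direct output of the abstract framework used in (i).
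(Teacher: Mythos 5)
Your proposal follows essentially the same architecture as the paper: contour-integral reformulation $\dot f=\Phi(f)$ via Proposition \ref{P:STOsol}, generator property by localization/freezing of coefficients, abstract parabolic theory from \cite{L95}, a parameter trick for smoothing, and the standard continuation criterion for (iii). A few implementation points differ from what the paper actually does and are worth flagging. First, the problem is treated in the \emph{fully nonlinear} framework of \cite[Ch.~8]{L95} (Theorem 8.1.1), not as a quasilinear problem; this is why the extra step of improving the uniqueness class (removing the weighted H\"older requirement, via interpolation in $H^{s'}$ with $s'\in(3/2,s)$) is needed and appears explicitly in the paper. Second, the paper does \emph{not} invert ``$\mathrm{id}+$frozen double-layer'' at the symbol level as you suggest; instead it builds a homotopy $\Psi(\tau)$ of operators, $\tau\in[0,1]$, with $\Psi(1)=\p\Phi(f_0)$ and $\Psi(0)$ an explicit Fourier multiplier, proves the localization estimate \eqref{D1} uniformly in $\tau$, and then closes the argument by the method of continuity; this neatly avoids the symbol-level inversion you anticipated as the hard part. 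Third, the frozen multipliers $\bA_{j,\tau}$ in \eqref{defAjtau} are not pure multiples of $-(-d^2/d\xi^2)^{1/2}$: they carry a first-order drift $\beta_\tau(\xi_j^\e)\,d/d\xi$ coming from the $\beta_0^1 f'$ term in \eqref{pPhi}, so the symbol is $-\alpha_\tau|\xi|+i\beta_\tau\xi$ with $\alpha_\tau>0$, not just ``a positive multiple of $-|\xi|$''. Finally, for parabolic smoothing (ii), the pure spatial-translation parameter you describe gives spatial gain only; the references the paper leans on combine translation with a time-rescaling parameter, and both are needed for (iia) and (iib). These are refinements rather than obstructions, but they reflect real technical work in the paper that your outline glosses over.
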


\begin{rem}  Observe that the complete problem \eqref{STOKES} is encoded in the time evolution of $f$.
Besides, if $f$ is a solution to \eqref{STOKES}, then, given $\lambda>0$, also $[t\mapsto f_\lambda(t)]$ given by
\[
f_\lambda(t)(\xi):=\lambda^{-1}f(\lambda t)(\lambda\xi),
\]
is a solution to \eqref{STOKES}. 
This identifies~$H^{3/2}(\mathbb{R})$ as a critical space for the evolution problem~\eqref{STOKES}. 
Hence, Theorem~\ref{MT1}   covers all subcritical spaces. 
\end{rem}

\subsection{\label{subsec01}Outline}
The paper is structured as follows: In Section \ref{sec:aux} we discuss a two-phase Stokes problem with equal viscosities in both phases where the normal stresses are continuous across the interface and the velocity has a prescribed jump there. In fact, the problem is solved by the hydrodynamic double layer potential generated by that jump. Although the boundary behavior of this potential is well-known, we prove the results on this in Appendix \ref{neargamma} as they do not seem directly available in the literature for our unbounded geometry. 

As we rely on the solvability of singular integral equations of the second kind arising from the hydrodynamic double-layer  potential,
 the spectrum of the corresponding operator is investigated in Sections \ref{Sec:10} and \ref{Sec:11}, first in $L_2(\R)^2$ and 
 then in~ $H^s(\R)^2,$ with~$s\in(3/2,2)$, and~$H^2(\R)^2$. 
 The main technical  tools in the latter  cases are shift invariances and commutator properties for singular integral operators of the type discussed here. 
 In Section \ref{Sec:13} we reformulate the moving boundary problem~\eqref{STOKES} as a nonlinear and nonlocal evolution equation problem, cf.~\eqref{NNEP}. 
 Finally, in Section \ref{Sec:linfinal} we carry out the linearization of \eqref{NNEP}  and locally approximate the linearization  by Fourier multipliers. 
This enables us to identify the parabolic character of the evolution equation and to prove our main result by invoking abstract results on equations of that type from~\cite{L95}.

\subsection{\label{subsec02}Notation}
 Slightly deviating from the usual notation, if $E_1,\ldots,E_k,\,F$, $k\in\N$, are Banach spaces, we write $\kL^k(E_1,\ldots,E_k;F)$ for the Banach space of bounded $k$-linear maps from $\prod_i E_i$ to $F$.  Given Banach spaces $X$ and $Y$, we let $\kL^k_{\rm sym}(X,Y)\subset\kL^k(X,\ldots,X;Y)$ denote the space of $k$-linear, bounded symmetric maps $A:\;X^k\to Y$.
 Moreover, ${\rm C}^{-1}(E,F)$ will denote the space of locally Lipschitz continuous maps from a Banach space~$E$ to a Banach space~$F$.
Given $k\in\N$, we further let~${\rm C}^k(\R)$ denote the Banach space of functions with bounded and continuous derivatives up to order~$k$ and ${\rm C}^{k+\alpha}(\R)$, $\alpha\in(0,1)$, 
is its subspace consisting of functions  with $\alpha$-H\"older continuous $k$th derivative  whose $\alpha$-H\"older modulus is bounded.

\section{\label{sec:aux} An auxiliary fixed-time problem}
 As a preparation for solving the boundary value problem ~$\eqref{probint}_1-\eqref{probint}_5$ for fixed time, in this section we consider the related Stokes problem \eqref{bvpaux} with equal viscosities normed to~$1$.
  The unique solvability of \eqref{bvpaux} is established  in  Proposition~\ref{auxpropbeta} below and in Appendix~\ref{neargamma}.

In this section, $f\in H^3(\RRM)$ is fixed. We introduce the following notation:
\[\Omega^\pm:=\Omega_f^\pm:=\{(x_1,x_2)\in\mathbb{R}^2\,|\,x_2\gtrless f(x_1)\},
 \qquad \Gamma:=\Gamma_f:=\partial\Omega^\pm= \{(\xi,f(\xi))\,|\,\xi\in\mathbb{R}\}.\]
 Note that $\Gamma$ is the image of $\R$ under the diffeomorphism 
 \[\Xi:=\Xi_f:=({\rm id}_\mathbb{R},f).\]
 Further, let $\nu$ and $\tau$ be the componentwise pull-back under~$\Xi$ of the unit normal~$\tilde\nu$ on~$\Gamma$ exterior to~$\Omega^-$ and of the unit tangent vector $\tilde\tau$ to~$\Gamma$,  that is
 \begin{align}\label{nutau}
\nu:=\tfrac{1}{\omega}(-f',1)^\top,\qquad\tau:=\tfrac{1}{\omega}(1,f')^\top,\qquad \omega:=\omega(f):=(1+f'^2)^{1/2}.
 \end{align}
 For any function $z$ defined on $\RRM^2\setminus\Gamma$ we set $z^\pm:=z|_{\Omega^\pm}$ and if $z^\pm$ have limits at some point~$(\xi,f(\xi))\in\Gamma$ we will write $z^{\pm}(\xi,f(\xi))$ for the limits, 
 and we set
 \be\label{defjump}
 [z] (\xi,f(\xi)):=z^+(\xi,f(\xi))-z^-(\xi,f(\xi)).
\ee
For notational brevity we introduce the function space
\begin{align*}
X:=X_f:=\left\{(w,q):\RRM^2\setminus\Gamma\longrightarrow\RRM^2\times\RRM\,\left|\,
\begin{aligned}
&w^\pm\in {\rm C}^2(\Omega^\pm,\RRM^2)\cap {\rm C}^1(\overline{\Omega^\pm},\RRM^2)\\
&q^\pm\in {\rm C}^1(\Omega^\pm)\cap {\rm C}(\overline{\Omega^\pm})
\end{aligned}\right.\right\}.
\end{align*}
For given $\beta=(\beta_1,\beta_2)^\top\in H^2(\RRM)^2$ we seek solutions $(w,q)\in X$ to the Stokes problem 
\be\label{bvpaux}
\left.\begin{array}{rcll}
\Delta w^\pm-\nabla q^\pm&=&0&\mbox{in $\Omega^\pm$,}\\
\vdiv w^\pm&=&0&\mbox{in $\Omega^\pm$,}\\{}
[w]&=&\beta\circ\Xi^{-1}&\mbox{on $\Gamma$,}\\
{}[T_1(w,q)](\nu\circ \Xi^{-1})&=&0&\mbox{on $\Gamma$,}\\
(w^\pm,q^\pm)(x)&\to&0&\mbox{for $|x|\to\infty$.}
\end{array}\right\}
\ee

For the construction of the solution to~\eqref{bvpaux}, let us first point out that for any smooth solution~${(U,P):E\longrightarrow\RRM^2\times\RRM}$ 
to the homogeneous Stokes system 
\begin{equation}\label{inhstosy}
 \left.\begin{array}{rllll}
\Delta U-\nabla P&=&0,\\[1ex]
\vdiv U&=&0
\end{array}\right\}\qquad\text{in  $E$},
\end{equation}
where $E$ is a domain in $\RRM^2$, the functions $(W^i,Q^i):E\longrightarrow\RRM^2\times \RRM$, $i=1,2$, given by
\[W^i_j:=T_{1,ij}(U,P)=-P\delta_{ij}+\p_iU_j+\p_jU_i,\quad j=1,\,2,\qquad\text{and}\qquad Q^i=2\p_i P\]
are solutions to \eqref{inhstosy} as well. 
In particular, if  $E=\RRM^2\setminus\{0\}$ and
\[(U,P)=(\clu^k,\clp^k):\mathbb{R}^2\setminus\{0\}\longrightarrow\mathbb{R}^2\times\mathbb{R},\qquad  k=1,2,\]
are the fundamental solutions to the Stokes equations \eqref{inhstosy}, given by 
\begin{equation}\label{fundUP}
\clu_j^k(y)=-\frac{1}{4\pi}\left(\delta_{jk}\ln\frac{1}{|y|}+\frac{y_jy_k}{|y|^2}\right),\quad j=1,\,2,\qquad\text{and}\qquad \clp^k(y)=-\frac{1}{2\pi}\frac{y_k}{|y|^2}
\end{equation}
for $y=(y_1,y_2)\in\RRM^2\setminus\{0\},$ we obtain a system $(\clw^{i,k},\clq^{i,k}):\RRM^2\setminus\{0\}
\longrightarrow\RRM^2\times\RRM$, $i,k=1,2$, of solutions to the homogeneous Stokes equations given by
\begin{align*}
    \clw^{i,k}_j(y)&:=(-\clp^k\delta_{ij}+\p_i\clu_j^k+\p_j\clu_i^k)(y)=\frac{1}{\pi}\frac{y_iy_jy_k}{|y|^4},\quad j=1,\,2,\\[1ex]
    \clq^{i,k}(y)&:=2\p_i \clp^k(y)=\frac{1}{\pi}\left(-\frac{\delta_{ik}}{|y|^2}+2\frac{y_iy_k}{|y|^4}\right),\qquad y\in\RRM^2\setminus\{0\}.
\end{align*}
We are going to show that $(w,q):=(w,q)[\beta]$ given by
\begin{align}
    w_j(x)&:=\int_\Gamma\clw^{i,k}_j(x-y)\tilde\nu_i(y)\beta_k(y_1)\,d\Gamma_y:=\int_\RRM\clw^{i,k}_j(r)\nu_i(s)\beta_k(s)\omega(s)\,ds,\quad j=1,\,2,\label{defw1}\\
    q(x)&:=\int_\Gamma\clq^{i,k}(x-y)\tilde\nu_i(y)\beta_k(y_1)\,d\Gamma_y:=\int_\RRM\clq^{i,k}(r)\nu_i(s)\beta_k(s)\,\omega(s)ds\label{defq1}
\end{align}
for  $x\in\RRM^2\setminus\Gamma$ and with $r:=r(x,s):=x-(s,f(s))$
 solves \eqref{bvpaux}. 
 Here and further, we sum over indices appearing twice in a product.
We write this more explicitly as
\begin{equation}\label{defw2}
\begin{aligned}
    w(x)&=\frac{1}{\pi}\int_\RRM\frac{-f'r_1+r_2}{|r|^4}
    \begin{pmatrix}
     r_1^2&r_1r_2\\
     r_1r_2&r_2^2
     \end{pmatrix}\beta\,ds,\\
    q(x)&=\frac{1}{\pi}\int_\RRM\frac{1}{|r|^4}\raisebox{1.1ex}{$(-f'\;\;1)$}
    \begin{pmatrix}
    r_1^2-r_2^2&2r_1r_2\\
    2r_1r_2&r_2^2-r_1^2
    \end{pmatrix}\beta\,ds.
\end{aligned}
\end{equation}
The solution $(w,q)$ is the so-called hydrodynamic double-layer potential generated by the density $\beta\circ\Xi^{-1}$ on $\Gamma$, see \cite{Lad63}.
\begin{prop}\label{auxpropbeta}
The boundary value problem \eqref{bvpaux} has precisely one solution  $(w,q)\in X$. It is given by \eqref{defw1}, \eqref{defq1}.   Moreover, $w^\pm|_{\G}\circ\Xi\in H^2(\R)^2$.
\end{prop}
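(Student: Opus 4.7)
For \emph{existence}, I would check directly that the pair $(w,q)$ defined by \eqref{defw1}, \eqref{defq1} lies in $X$ and solves \eqref{bvpaux}. Away from $\Gamma$, smoothness and the Stokes equations $\Delta w^\pm - \nabla q^\pm = 0$, $\vdiv w^\pm = 0$ follow from the fact that, for each fixed $y$, the kernels $(\clw^{i,k},\clq^{i,k})(\cdot - y)$ solve the homogeneous Stokes system; differentiation under the integral is justified by the $|r|^{-1}$ decay of $\clw^{i,k}$, the $|r|^{-2}$ decay of $\clq^{i,k}$, together with $\beta\in H^2(\R)^2$ and $f\in H^3(\R)$. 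The decay condition $(w^\pm,q^\pm)(x)\to 0$ as $|x|\to\infty$ follows by dominated convergence. The substantive parts of existence are the jump relations $[w]=\beta\circ\Xi^{-1}$ and $[T_1(w,q)](\nu\circ\Xi^{-1})=0$ on $\Gamma$, the boundary regularity $w^\pm|_\Gamma\circ\Xi\in H^2(\R)^2$, and the continuity of $(w^\pm,q^\pm)$ up to $\Gamma$ required for membership in $X$. These are the classical jump properties of the hydrodynamic double layer potential; because $\Gamma$ is unbounded they are not directly available in the literature and would be proved in Appendix~\ref{neargamma}, as announced in the introduction.

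For \emph{uniqueness}, let $(w,q)\in X$ solve \eqref{bvpaux} with $\beta=0$. Combining the Stokes equations with $\vdiv w^\pm=0$ and the symmetry of $T_1$ yields the pointwise identity
\begin{align*}
\p_i\bigl(T_{1,ij}(w^\pm,q^\pm)\,w_j^\pm\bigr) = 2|Dw^\pm|^2, \qquad Dw:=\tfrac{1}{2}(\nabla w + \nabla w^\top),
\end{align*}
on $\Omega^\pm$. Integrating over $B_R\cap\Omega^\pm$ and summing the two contributions, the interface boundary terms rearrange to
\begin{align*}
-\int_{\Gamma\cap B_R}[T_1(w,q)]\tilde\nu\cdot w^+\,d\Gamma \;-\; \int_{\Gamma\cap B_R} T_1(w^-,q^-)\tilde\nu\cdot[w]\,d\Gamma,
\end{align*}
and both of these vanish under the homogeneous interface conditions $[T_1(w,q)]\tilde\nu = 0$, $[w]=0$. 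What remains is
\begin{align*}
2\int_{B_R\setminus\Gamma}|Dw|^2\,dx = \int_{\p B_R}\bigl(T_1(w,q)\,\nu_R\bigr)\cdot w\,dS_R,
\end{align*}
with $\nu_R$ the outward unit normal on $\p B_R$. Passing to the limit $R\to\infty$ (along a suitable subsequence if needed) kills the right-hand side, so $Dw\equiv 0$ on $\R^2\setminus\Gamma$. Then $w^\pm$ is an affine rigid motion in each phase, and decay at infinity forces $w^\pm\equiv 0$; consequently $\nabla q^\pm = \Delta w^\pm = 0$, and decay again gives $q^\pm\equiv 0$.

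The main obstacle is making the limit $R\to\infty$ rigorous: mere pointwise decay inherent in $(w,q)\in X$ is not obviously enough to control the $\p B_R$-integral. To close this gap I would upgrade the decay of any homogeneous solution by exploiting that $\Gamma$ is asymptotically flat (since $f\in H^3(\R)$ forces $f,f'\to 0$ at infinity), so that an exterior expansion of $(w,q)$ in terms of the Stokes fundamental solutions $(\clu^k,\clp^k)$ yields the rates $|w|=O(|x|^{-1})$ and $|\nabla w|+|q|=O(|x|^{-2})$, whence the $\p B_R$-integral is $O(R^{-2})\to 0$. A secondary technical point is that the divergence-theorem step must accommodate only $w^\pm\in{\rm C}^1(\overline{\Omega^\pm})$, $q^\pm\in{\rm C}(\overline{\Omega^\pm})$; this is precisely the regularity encoded in the definition of $X$, so it suffices to apply the divergence theorem on $\Omega^\pm\cap B_R$ after a standard approximation by smooth cut-offs.
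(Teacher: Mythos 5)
Your overall architecture matches the paper's: verify that the explicit layer potential solves the PDE and decays, defer the jump relations, boundary regularity, and continuity up to $\Gamma$ to Appendix~\ref{neargamma}, and establish uniqueness by an energy/Rellich-type identity plus an exhaustion argument over $B_R$. The algebra in your uniqueness step — the pointwise identity $\p_i(T_{1,ij}w_j)=2|Dw|^2$, the rearrangement of the $\Gamma$-boundary terms into $[T_1]\tilde\nu\cdot w^+$ and $T_1(w^-,q^-)\tilde\nu\cdot[w]$, and the conclusion $Dw\equiv 0 \Rightarrow w$ rigid $\Rightarrow w\equiv 0$ from decay — is correct.

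There are, however, two places where you wave at nontrivial facts. First, the decay of $(w,q)$ at infinity does not ``follow by dominated convergence'': the kernels only decay like $s^{-1}$ in $s$ locally uniformly in $x$, but they do \emph{not} decay uniformly as $|x|\to\infty$ along directions roughly parallel to $\Gamma$ (for $x=(x_1,c)$ with $x_1\to\infty$ and $s\approx x_1$, $|r|$ stays bounded). The paper instead integrates by parts in $s$ to rewrite $q$ and $w$ as potentials acting on $\beta'$, and then invokes the quantitative decay lemmas [MBV18, Lemma~2.1] and [MP2021, Lemma~B.2] for exactly such integrals with $L_2$ densities; your argument omits this step. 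Second, for uniqueness you correctly identify the obstacle — the $\partial B_R$ term is not controlled by the qualitative decay built into $X$ — but your proposed remedy (an exterior multipole expansion yielding $w=O(|x|^{-1})$, $\nabla w,\,q=O(|x|^{-2})$) is left as a plan rather than a proof, and establishing such an expansion for an \emph{arbitrary} $X$-solution of the homogeneous problem is essentially the whole content of the uniqueness claim. The paper sidesteps this by reducing uniqueness to [MP2021, Theorem~2.1], where this machinery is developed. So: right strategy, but the two decay statements you treat as routine are precisely the ones that require the cited lemmas.
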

\begin{proof}
The uniqueness of the solution can be shown as in the proof of \cite[Theorem 2.1]{MP2021}. 
Observe that $w$ and $q$ are defined by integrals of the form
\[(w,q)(x)=\int_\RRM K(x,s)\beta(s)\,ds\]
where for every $\alpha\in\N^2$ we have $\partial^\alpha_x K(x,s)=O(s^{-1})$ for $|s|\to\infty$ and locally uniformly in~${x\in \RRM^2\setminus\Gamma}$.
 This shows that $w$ and $q$ are well-defined by \eqref{defw1} and \eqref{defq1}, and that integration and differentiation with respect to $x$ may be interchanged. As $(\clw^{i,k},\clq^{i,k})$ solve the homogeneous Stokes equations, this also holds for $(w,q)$.

To show the decay of $q$ at infinity we obtain  from the matrix equality
\begin{align*}
    &\frac{1}{\pi|r|^4}\raisebox{1.1ex}{$(-f'\;\;1)$}
  \begin{pmatrix}
    r_1^2-r_2^2&2r_1r_2\\
    2r_1r_2&r_2^2-r_1^2
    \end{pmatrix} =-2\p_s (\clp^2(r)\;\;-\clp^1(r)),
\end{align*}
 via integration by parts
\[q(x)=2\int_\RRM(\clp^2\;\;-\clp^1)(r)\beta'\,ds
=\frac{1}{\pi}\int_\RRM\frac{1}{|r|^2}
(-r_2\;\;r_1)\beta'\,ds.\]
 In view of this representation, \cite[Lemma 2.1]{MBV18} implies $q(x)\to 0$ as $|x|\to\infty$.
 
 In order to prove the decay of $w$ we rewrite
 \begin{align*}
     w(x)&=\frac{1}{2\pi}\int_\RRM\frac{-f'r_1+r_2}{|r|^2}\left(
     I+\frac{1}{|r|^2}\begin{pmatrix}
     r_1^2-r_2^2&2r_1r_2\\
     2r_1r_2&r_2^2-r_1^2
     \end{pmatrix}\right)\beta\,ds\\
     &=\frac{1}{2\pi}\int_\RRM\left(\frac{-f'r_1+r_2}{|r|^2}I
     +\p_s\left[\frac{1}{|r|^2}
     \begin{pmatrix}
     r_1r_2& r_2^2\\
     r_2^2&-r_1r_2
     \end{pmatrix}\right]\right)\beta\,ds\\
     &=\frac{1}{2\pi}\int_\RRM\left(
     \frac{-f'r_1+r_2}{|r|^2}\beta-
     \frac{1}{|r|^2}
     \begin{pmatrix}
     r_1r_2& r_2^2\\
     r_2^2&-r_1r_2
     \end{pmatrix}\beta'\right)\,ds,
\end{align*}
where $I\in\R^{2\times 2}$ is the identity matrix.
 Now  \cite[Lemma 2.1]{MBV18} and \cite[Lemma B.2]{MP2021} imply that indeed~$w(x)\to 0$ for $|x|\to\infty$.

The boundary conditions \eqref{bvpaux}$_3$ and \eqref{bvpaux}$_4$  together with the properties  that $(w,q)\in X$ and~${w^\pm|_{\G}\circ\Xi\in H^2(\R)^2}$ are shown in Appendix \ref{neargamma}.
 \end{proof}

           %%%%%%%%%%%%%%%%%%%%%%%%%%%%%%%%%%%%%%%%%%%%%%%%%%
%%%%%%%%%%%%%%%%%%%%%%%%%%%%%%%%%%%%%%%%%%%%%%%%%%
%%%%%%%%%%%%%%%%%%%%%%%%%%%%%%%%%%%%%%%%%%%%%%%%%%
%%%%%%%%%%%%%%%%%%%%%%%%%%%%%%%%%%%%%%%%%%%%%%%%%%
\section{The $L_2$-resolvent of the hydrodynamic double-layer potential operator}\label {Sec:10}
%%%%%%%%%%%%%%%%%%%%%%%%%%%%%%%%%%%%%%%%%%%%%%%%%%
%%%%%%%%%%%%%%%%%%%%%%%%%%%%%%%%%%%%%%%%%%%%%%%%%%
%%%%%%%%%%%%%%%%%%%%%%%%%%%%%%%%%%%%%%%%%%%%%%%%%%
%%%%%%%%%%%%%%%%%%%%%%%%%%%%%%%%%%%%%%%%%%%%%%%%%%

 In this section we study the  resolvent set  of the hydrodynamic double-layer potential operator $\bD(f),$  with $f\in {\rm C}^1(\R)$, introduced in \eqref{DFB} below,
 which we view in this section as an element of~$\kL(L_2(\R)^2)$.
The main result of this section  is  Theorem~\ref{L2spec} below which provides in particular the invertibility of $\lambda-\bD(f)$ for $\lambda\in\R$ with $|\lambda|>1/2$.
 
To begin,  we introduce a general class of singular integral  operators  suited to our approach via layer potentials, cf. \cite{MBV19,MP2021}. 
Given~${n,\,m\in\N}$ and  Lipschitz continuous  functions~${a_1,\ldots, a_{m},\, b_1, \ldots, b_n:\mathbb{R}\longrightarrow\mathbb{R}}$,  we let~$B_{n,m}$ denote  the singular integral   operator
\begin{equation}\label{BNM}
 B_{n,m}(a_1,\ldots, a_m)[b_1,\ldots,b_n,h](\xi):=\PV\int_\mathbb{R}  \frac{h(\xi-\eta)}{\eta}\cfrac{\prod_{i=1}^{n}\big(\delta_{[\xi,\eta]} b_i /\eta\big)}{\prod_{i=1}^{m}\big[1+\big(\delta_{[\xi,\eta]}  a_i /\eta\big)^2\big]}\, d\eta,
\end{equation}
where  $\PV\int_\R$ denotes the principal value integral and $\delta_{[\xi,\eta]}u:=u(\xi)-u(\xi-\eta)$. 
 For brevity we set
\begin{equation}\label{defB0}
B^0_{n,m}(f):=B_{n,m}(f,\ldots f)[f,\ldots,f,\cdot].
\end{equation}

In   this section we several times use the following result.
\begin{lemma}\label{L:MP0'}
There exists a constant~$C$ depending only 
on $n,\, m$, and $\max_{i=1,\ldots, m}\|a_i'\|_{\infty}$ with
 $$\|B_{n,m}(a_1,\ldots, a_m)[b_1,\ldots,b_n,\,\cdot\,]\|_{\kL(L_2(\mathbb{R}))}\leq C\prod_{i=1}^{n} \|b_i'\|_{\infty}.$$ 
 Moreover,   $B_{n,m}\in {\rm C}^{1-}(W^1_\infty(\mathbb{R})^{m},\kL_{{\rm sym}}^n(W^1_\infty(\mathbb{R}) ,\kL(L_2(\mathbb{R})))).$
\end{lemma}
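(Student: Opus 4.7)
The plan is to prove the two assertions in sequence: first the $L_2$-operator bound, then the local Lipschitz continuity in the parameters $a_1,\ldots,a_m$. The second follows from the first by a telescoping argument.

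For the operator bound, I regard the kernel
\[
K(\xi,\eta):=\frac{1}{\eta}\cdot\frac{\prod_{i=1}^{n}(\delta_{[\xi,\eta]}b_i/\eta)}{\prod_{i=1}^{m}[1+(\delta_{[\xi,\eta]}a_i/\eta)^2]}
\]
as a multilinear Calder\'on--Zygmund kernel with size and regularity constants depending only on $\max_i\|a_i'\|_\infty$ and on $n,m$. For $m=0$, $B_{n,0}$ is, up to constants, the Calder\'on commutator of order $n$, whose $L_2$-boundedness with norm $\le C\prod\|b_i'\|_\infty$ is the Coifman--McIntosh--Meyer theorem. To incorporate the denominator factors for general $m$ I would use the Fourier representation
\[
\frac{1}{1+x^2}=\frac{1}{2}\int_\R e^{-|\zeta|}\,e^{ix\zeta}\,d\zeta
\]
to express each denominator as an average of complex exponentials, so that
\[
\prod_{i=1}^m\frac{1}{1+(\delta_{[\xi,\eta]}a_i/\eta)^2}
=\frac{1}{2^m}\int_{\R^m}\Bigl(\prod_i e^{-|\zeta_i|}\Bigr)\exp\!\Bigl(i\frac{\delta_{[\xi,\eta]}A_\zeta}{\eta}\Bigr)d\zeta,
\qquad A_\zeta:=\sum_i\zeta_ia_i.
\]
Since $\|A_\zeta'\|_\infty\le(\sum_i|\zeta_i|)\max_i\|a_i'\|_\infty$, each inner operator reduces after expansion of the exponential to Cauchy-type integrals on complex Lipschitz graphs and Calder\'on commutators, all controlled by Coifman--McIntosh--Meyer, and the weight $\prod e^{-|\zeta_i|}$ guarantees absolute convergence in operator norm after integration in $\zeta$. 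This is essentially the computation carried out in the authors' earlier works \cite{MBV19,MP2021} for the same class of operators.

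For the local Lipschitz continuity, restrict to parameters with $\|a_i'\|_\infty,\|\tilde a_i'\|_\infty\le R$ and telescope $B_{n,m}(a_1,\dots,a_m)-B_{n,m}(\tilde a_1,\dots,\tilde a_m)$ over its $m$ arguments. The $k$-th summand involves only the single difference in the $k$-th denominator factor, to which I apply the elementary identity
\[
\frac{1}{1+(\delta a_k/\eta)^2}-\frac{1}{1+(\delta\tilde a_k/\eta)^2}
=\frac{\delta(\tilde a_k-a_k)}{\eta}\cdot\frac{\delta(\tilde a_k+a_k)/\eta}{[1+(\delta a_k/\eta)^2][1+(\delta\tilde a_k/\eta)^2]}.
\]
The resulting integrand has exactly the form \eqref{BNM}, but now as a $B_{n+2,m+1}$-operator: one extra numerator factor $\delta(\tilde a_k-a_k)/\eta$, one extra generic numerator factor $\delta(\tilde a_k+a_k)/\eta$, and one extra denominator factor $1+(\delta\tilde a_k/\eta)^2$. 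Applying the already proved first part to this enlarged operator, together with $\|(\tilde a_k+a_k)'\|_\infty\le 2R$, bounds the $k$-th summand by $C(R,n,m)\|a_k-\tilde a_k\|_{W^1_\infty}\prod_i\|b_i'\|_\infty$, and summation in $k$ yields the required local Lipschitz estimate in $(a_1,\ldots,a_m)$. The $n$-linearity and symmetry of $B_{n,m}$ in $(b_1,\ldots,b_n)$ are immediate from \eqref{BNM}.

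The principal obstacle is the first step, i.e.\ the $L_2$-boundedness itself: combining $n$ linear numerator factors with $m$ nonlinear denominator factors requires either the reduction to Coifman--McIntosh--Meyer indicated above, or a direct $T(1)$ verification, with constants kept uniform in the derivatives of the $a_i$. Once this is in hand, the Lipschitz step is routine though somewhat tedious bookkeeping, the only subtle point being the identification of the difference as an operator in the same class \eqref{BNM}.
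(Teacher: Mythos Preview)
The paper's own proof is a bare citation to \cite[Remark~3.3]{MBV19}, so there is no detailed argument in the paper to compare against; your sketch already goes further than what the paper supplies. Your treatment of the local Lipschitz continuity is correct and is precisely the mechanism the paper itself records later as identity~\eqref{diffid}: replacing one $a_k$ by $\tilde a_k$ produces a $B_{n+2,m+1}$-operator, to which the first assertion applies.

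The $L_2$-boundedness step, however, has a gap as written. After the Fourier expansion of the $m$ denominator factors you are left with operators $T_\zeta$ whose kernels carry the factor $e^{i\delta_{[\xi,\eta]}A_\zeta/\eta}$, with $\|A_\zeta'\|_\infty\le M\sum_i|\zeta_i|$ and $M:=\max_i\|a_i'\|_\infty$. If you ``expand the exponential'' and bound each term by the Coifman--McIntosh--Meyer polynomial estimate $\|C_k\|\le C(1+k)^p\|A'\|_\infty^k$ for Calder\'on commutators, the resulting bound on $\|T_\zeta\|$ is of order $e^{M\sum_i|\zeta_i|}$, and the integral against the weight $\prod_i e^{-|\zeta_i|}$ diverges as soon as $M\ge 1$. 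So the claimed absolute convergence is unjustified for large Lipschitz constants. To salvage the Fourier route you would need a \emph{polynomial} bound on $\|T_\zeta\|$ in $\|A_\zeta'\|_\infty$ (which is true, e.g.\ via a direct $T(1)$ verification for the kernel $\eta^{-1}\prod_j(\delta b_j/\eta)\,e^{i\delta A/\eta}$), but that is a substantial result in its own right and you have not supplied it; the phrase ``Cauchy-type integrals on complex Lipschitz graphs'' does not by itself indicate such a reduction.
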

\begin{proof}
See  \cite[Remark 3.3]{MBV19}.
\end{proof}

Given $f\in  {\rm C^1}(\R)$, we introduce the linear operators $\bD(f)$ and $\bD(f)^\ast$ defined  by 
\begin{equation}\label{defD}
\begin{aligned}
    \bD(f)[\beta](\xi)&:=\frac{1}{\pi}\PV\int_\RRM
    \frac{r_1 f'- r_2}{ |r|^4}
   \begin{pmatrix}
r_1^2&r_1 r_2\\
r_1 r_2& r_2^2
\end{pmatrix}\beta\,ds,\\
\bD(f)^\ast[\beta](\xi)&:=\frac{1}{\pi}\PV\int_\RRM
    \frac{-r_1 f'(\xi)+ r_2}{|r|^4}
\begin{pmatrix}
r_1^2&r_1 r_2\\
 r_1r_2&r_2^2
\end{pmatrix}\beta\,ds,
\end{aligned}
\end{equation}
where $\xi\in\RRM$ and  $\beta\in L_2(\RRM)^2$. Throughout this section $r:=(r^1,r^2)$ is given by
\begin{align}\label{rxs}
r:=r(\xi,s):=(\xi-s,f(\xi)-f(s)).
\end{align}
 We note that $\bD(f)$ is related to the  $B_{n,m}$  via
\begin{align}\label{DFB}
\bD(f)[\beta] 
=\frac{1}{\pi}\begin{pmatrix}
B_{0,2}^0(f)&B_{1,2}^0(f)\\[1ex]
B_{1,2}^0(f)&B_{2,2}^0(f)
\end{pmatrix}
\begin{pmatrix}
f'\beta_1\\[1ex]
f'\beta_2
\end{pmatrix}
-\frac{1}{\pi}\begin{pmatrix}
B_{1,2}^0(f)&B_{2,2}^0(f)\\[1ex]
B_{2,2}^0(f)&B_{3,2}^0(f)
\end{pmatrix}
\begin{pmatrix}
\beta_1\\[1ex]
\beta_2
\end{pmatrix}
\end{align} 
for $\beta=(\beta_1,\,\beta_2)^\top$. 
Therefore, as a direct consequence of Lemma~\ref{L:MP0'},~$\bD(f)$ is bounded on~$L_2(\RRM)^2$.
Moreover, up to the sign and the push-forward via~$\Xi$, $\bD(f)[\beta](\xi)$ is the ``direct value'' of 
the hydrodynamic double-layer potential~$w$ generated by $\beta$  in $(\xi,f(\xi))\in\Gamma$, cf.~$\eqref{defw2}_1$. 
One may also check that $\bD(f)^*$ is the $L_2$-adjoint~of~$\bD(f)$.

Using the same notation, we define the  singular integral operators~$\bB_1(f)$ and~$\bB_2(f)$ by
\begin{align*}
    \bB_1(f)[\theta](\xi):=\frac{1}{\pi}\PV\int_\RRM
    \frac{- r_1 f'+ r_2}{ |r|^2}\,\theta\,ds\quad\text{and}\quad
    \bB_2(f)[\theta](\xi):=\frac{1}{\pi}\PV\int_\RRM
    \frac{r_1 + r_2 f'}{|r|^2}\,\theta\,ds,
\end{align*}
where  $\theta\in L_2(\RRM)$.
The operators $\bB_i(f)$, $i=1,\, 2$, play an important role also in the study of the Muskat problem, cf.~\cite{MBV18}.
Lemma~\ref{L:MP0'} implies that  also  $\bB_i(f)$ $i=1,\, 2$, is    bounded on~$L_2(\RRM)$.
 Moreover, 
  $\bB_1(f)[\theta](\xi)$ is the direct value of the double layer potential for the Laplacian corresponding to the density $\theta$ in $(\xi,f(\xi))\in\Gamma$.

We are going to prove in Theorem~\ref{L2spec} below that the resolvent sets of $\bD(f)$ and $\bD(f)^\ast$ contain all real~$\lambda$ with~$|\lambda|>1/2$, 
with a bound on the resolvent that is uniform in $\lambda$ away from $\pm1/2$, and in $f$ as long as $\|f'\|_\infty$ is bounded. 

 Oriented at \cite{FKV88,CP09}, we obtain this property  on the basis of a Rellich identity for the Stokes operator.
While eventually the result for $\bD(f)$ is needed, it is helpful to consider $\bD(f)^\ast$, as this operator naturally arises from the jump relations for the single-layer hydrodynamic potential generated by $\beta$,  cf.~\eqref{stressbdry} below.

 We next derive the Rellich identity~\eqref{rellich}, and based on it we establish  an estimate that relates the operator $\bD(f)^\ast$ to the operators 
$\bB_1(f)$ and $\bB_2(f)$ introduced above.
\begin{lemma}\label{lemrel}
Given  $K>0$,  there exists a positive constant $C$,  that depends only on $K$, such that for all~${\beta\in L_2(\RRM)^2}$, $\lambda\in[-K,K]$,  
and~$f\in  {\rm C}^1(\RRM)$ with  $\|f'\|_\infty<K$ we have
\be\label{lemma-rel}
\begin{aligned}
C\|(\lambda-\bD(f)^*)[\beta]\|_2\|\beta\|_2&\geq\|(\lambda-\tfrac{1}{2}\bB_1(f))
[\omega^{-1}\beta\cdot\nu]-\tfrac{1}{2}\bB_2(f)[\omega^{-1}\beta\cdot\tau]\|^2_2\\[1ex]
&\hspace{0.45cm}+m(\lambda)\|\omega^{-1}\beta\cdot\tau\|^2_2,
\end{aligned}
\ee
where $\omega$, $\nu$, and $\tau$ are defined in \eqref{nutau}, and with
\be\label{defmlam}
m(\lambda):=\max\left\{
\left(\lambda+\tfrac{1}{2}\right)\left(\lambda-\tfrac{3}{2}\right),
\left(\lambda-\tfrac{1}{2}\right)\left(\lambda+\tfrac{3}{2}\right)
\right\}.
\ee
\end{lemma}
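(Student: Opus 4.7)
The strategy is to relate both sides of~\eqref{lemma-rel} through the hydrodynamic single-layer potential $(v,p)$ associated with the density $\beta$, i.e.\ the analogue of~\eqref{defw1}--\eqref{defq1} built from the fundamental solution $(\clu^k,\clp^k)$ of~\eqref{fundUP} in place of $(\clw^{i,k},\clq^{i,k})$. Jump relations for this potential, whose derivation in the unbounded graph setting parallels Appendix~\ref{neargamma}, yield $(v^\pm,p^\pm)\in X_f$ solving the Stokes system in $\Omega^\pm$, a continuous velocity trace $\psi:=v|_\Gamma\circ\Xi$, and normal-stress boundary values
\[
t^\pm:=T_1(v^\pm,p^\pm)\tilde\nu|_\Gamma\circ\Xi=\bigl(\mp\tfrac12\,\id+\bD(f)^*\bigr)[\beta].
\]
Consequently $\lambda\beta-\bD(f)^*[\beta]=(\lambda\pm\tfrac12)\beta-t^\pm$, and by Cauchy--Schwarz the left-hand side of~\eqref{lemma-rel} bounds $\bigl|\langle(\lambda\pm\tfrac12)\beta-t^\pm,\beta\rangle_{L_2}\bigr|$ from above. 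Producing the right combination of such inner products via a Rellich identity will reconstruct the right-hand side of~\eqref{lemma-rel}.

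The central algebraic tool is a Rellich identity for the Stokes equations in the style of~\cite{FKV88, CP09, MBV18}. Testing $\Delta v^\pm-\nabla p^\pm=0$ against $(e_2\cdot\nabla)v^\pm$, integrating by parts in $\Omega^\pm$ with a cut-off at infinity (decay of $(v^\pm,p^\pm)$ being justified as in Proposition~\ref{auxpropbeta} via~\cite[Lemma~2.1]{MBV18}), and exploiting $\vdiv v^\pm=0$ together with $(e_2\cdot\tilde\nu)\,d\Gamma=d\xi$, one obtains a boundary identity of the schematic form
\[
\int_\R|t^\pm|^2\,d\xi=\int_\R Q^\pm\bigl(p^\pm|_\Gamma\circ\Xi,\,\psi,\,\partial_\xi\psi\bigr)\,d\xi,
\]
where $Q^\pm$ is an explicit quadratic form in the boundary traces. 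Expanding $|t^\pm|^2$ using $t^\pm=(\lambda\pm\tfrac12)\beta-(\lambda\beta-\bD(f)^*\beta)$ and applying Cauchy--Schwarz to the resulting mixed term $2(\lambda\pm\tfrac12)\bigl\langle\beta,\lambda\beta-\bD(f)^*\beta\bigr\rangle_{L_2}$ reshapes the identity into an inequality whose left-hand side is, up to constants, $\|(\lambda-\bD(f)^*)\beta\|_2\|\beta\|_2$.

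It then remains to recognize the right-hand side of~\eqref{lemma-rel} inside $Q^\pm$. A direct computation from the integral formulas for $p$ and $\partial_\xi\psi$, combined with the Plemelj-type jump relation $p^\pm|_\Gamma=\mp\tfrac12\,\omega^{-1}\beta\cdot\nu+(\text{direct value})$, expresses the averaged pressure trace as $-\tfrac12\bB_1(f)[\omega^{-1}\beta\cdot\nu]-\tfrac12\bB_2(f)[\omega^{-1}\beta\cdot\tau]$, and the relevant tangential-derivative components of $\psi$ as further linear combinations of $\bB_1(f)[\omega^{-1}\beta\cdot\nu]$, $\bB_2(f)[\omega^{-1}\beta\cdot\tau]$, $\omega^{-1}\beta\cdot\nu$, and $\omega^{-1}\beta\cdot\tau$. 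Substituting into $Q^\pm$ causes the combination $(\lambda-\tfrac12\bB_1(f))[\omega^{-1}\beta\cdot\nu]-\tfrac12\bB_2(f)[\omega^{-1}\beta\cdot\tau]$ to emerge as the dominant squared term, while the residual pure $\|\omega^{-1}\beta\cdot\tau\|_2^2$ contribution carries the coefficient $(\lambda+\tfrac12)(\lambda-\tfrac32)$ or $(\lambda-\tfrac12)(\lambda+\tfrac32)$ depending on whether the $\Omega^+$- or the $\Omega^-$-version of the Rellich identity is used; keeping whichever sign gives the stronger bound produces $m(\lambda)$. The main obstacle is precisely this algebraic bookkeeping: the cancellation that collapses $Q^\pm$ into the clean right-hand side of~\eqref{lemma-rel} is delicate and requires the symmetry of $T_1$ together with $\vdiv v^\pm=0$ in order to link $\partial_{\tilde\tau}v\cdot\tilde\nu$ with $\partial_{\tilde\nu}v\cdot\tilde\tau$ and to trade normal derivatives against tangential ones. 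Once the identity has been assembled, uniformity in $\lambda\in[-K,K]$ and in $f$ with $\|f'\|_\infty<K$ follows automatically from the $L_2$-bounds of Lemma~\ref{L:MP0'} applied to all the $B_{n,m}$-operators that appear.
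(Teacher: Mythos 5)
Your plan follows the paper's route quite closely in outline — single-layer hydrodynamic potential, Rellich identity obtained by testing the Stokes system against $e_2\cdot\nabla v^\pm$, jump relations for the normal stress and the pressure, the operators $\bB_1,\bB_2$ appearing in the pressure trace, $m(\lambda)$ arising from choosing the better of the $\Omega^+$- and $\Omega^-$-inequalities, and $L_2$-boundedness of the $B_{n,m}$-class for uniformity. But there is one ingredient that you wave away as ``delicate algebraic bookkeeping'' that in fact carries the whole argument, and your attribution of the cancellation to ``the symmetry of $T_1$ together with $\vdiv v^\pm=0$'' is wrong. The boundary term $\partial_2 u^\pm\circ\Xi$ splits as $\TTM(f)[\beta]\mp\tfrac{(\beta\cdot\tau)\tau}{2\omega^2}$, where $\TTM(f)$ is a specific singular integral operator, and pairing this with the normal stress $(\lambda-\bD(f)^*)[\beta]-(\lambda\pm\tfrac12)\beta$ produces the term $(\lambda\pm\tfrac12)\langle\beta,\TTM(f)[\beta]\rangle_2$. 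This is of order $\|\beta\|_2^2$ and cannot be absorbed into $C\|(\lambda-\bD(f)^*)[\beta]\|_2\|\beta\|_2$. The proof survives only because $\TTM(f)$ is \emph{skew-adjoint} on $L_2(\R)^2$ (its kernel is an odd, symmetric matrix function of $r(\xi,s)$), so this pairing vanishes identically. That is a separate structural observation about the layer-potential kernel, not a consequence of the symmetry of the stress tensor or of incompressibility, and without it the estimate does not close.

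A second, more minor, imprecision: your schematic Rellich identity $\int_\R|t^\pm|^2\,d\xi=\int_\R Q^\pm\,d\xi$ is not what one actually gets. The identity reads
\begin{equation*}
\int_\Gamma\frac{1}{\omega}\sum_{i,j}(\partial_iu_j^\pm+\partial_ju_i^\pm)^2\,d\Gamma
=4\int_\Gamma\tau_{ij}^\pm\tilde\nu_i\,\partial_2u_j^\pm\,d\Gamma,
\end{equation*}
and the left-hand side must be bounded \emph{below} by $\|(\omega^{-1}\text{stress})+\Pi^\pm\nu\|_2^2$ rather than by $|t^\pm|^2$ alone. Keeping the pressure trace in this combination is what makes the $\pm\tfrac12$ coming from $\Pi^\pm$ cancel $\pm\tfrac12$ in the normal direction, so that the prefactor $(\lambda\pm\tfrac12)^2$ ends up multiplying only $\|\omega^{-1}\beta\cdot\tau\|_2^2$ and not the full $\|\beta\|_2^2$. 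Expanding $|t^\pm|^2$ the way you suggest gives the wrong prefactor. Finally, the Gauss-theorem derivation requires $f\in {\rm C}^\infty$ and compactly supported $\beta$; extending to $f\in{\rm C}^1$ and $\beta\in L_2$ needs the density argument (via Lemma~\ref{L:MP0'}) that you do not mention.
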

\begin{proof}
Let first  $f\in {\rm C}^\infty(\R)$ and $\beta=(\beta_1,\beta_2)^\top$ with $\beta_k\in {\rm C}_0^\infty(\R)$, $k=1,\, 2$. 
 We define the hydrodynamic single-layer potential $u$ with corresponding pressure $\Pi$ by 
\[ u(x):=-\int_\R\cU^k( x-(s,f(s)))\beta_k(s)\, ds\qquad\text{and}\qquad \Pi(x):=-\int_\R\cP^k( x-(s,f(s)))\beta_k(s)\, ds\]
for $x\in\R^2\setminus\Gamma$, where  and   $\cU^k$, $\cP^k$ defined by \eqref{fundUP}. 
 Using the fact that  $\beta$ is  compactly supported, is is not difficult to see that the functions $(u,\Pi)$ are well-defined and smooth in~$\Omega^\pm$ and satisfy 
\begin{equation}\label{Sto34}
 \left.\begin{array}{rllll}
\Delta u-\nabla \Pi&=&0,\\[1ex]
\vdiv u&=&0
\end{array}\right\}\qquad\text{in  $\0^\pm$},
\end{equation}
as well as
\begin{equation}\label{decay}
\Pi,\,\nabla u=O(|x|^{-1})\qquad \mbox{for $|x|\to\infty$}.
\end{equation}
Moreover,  \cite[Lemma~A.1]{BM21x} and  the arguments in the proof of \cite[Lemma~A.1]{MP2021} show that
the functions $\Pi|_{\Omega^\pm}$ and $u|_{\Omega^\pm}$ have extensions $\Pi^\pm\in {\rm C}(\overline{\Omega^\pm})$ and 
$u^\pm\in {\rm C}^1(\overline{\Omega^\pm})$, and, given $\xi\in\R$, we have 
\begin{equation}\label{repp}
\begin{aligned}
    \partial_iu_j^\pm \circ\Xi(\xi) &=-\PV\int_\RRM\partial_i\cU_j^k(r)\beta_k\,ds\pm\frac{-\beta_j\nu^i+\nu^i\nu^j\beta\cdot\nu}{2\omega}(\xi),\\
    \Pi^\pm\circ\Xi(\xi) &= -\PV\int_\RRM\cP^k(r)\beta_k\,ds\pm\frac{\beta\cdot\nu}{2\omega}(\xi)\\
    &=\frac{1}{2}\bB_1(f)[\omega^{-1}\beta\cdot\nu](\xi)+
    \frac{1}{2}\bB_2(f)[\omega^{-1}\beta\cdot\tau](\xi)
    \pm\frac{\beta\cdot\nu}{2\omega}(\xi),
\end{aligned}
\end{equation}
  where $\nu=(\nu^1,\nu^2)$ and $r=r(\xi,s)$  are defined  in \eqref{nutau} and \eqref{rxs}.
In particular, 
 \be\label{d2urep}
 \partial_2u^\pm\circ\Xi(\xi)=\TTM(f)[\beta](\xi)\mp\frac{(\beta\cdot\tau)\tau}{2\omega^2}(\xi),
 \ee
 where  $\TTM(f)$ is the singular integral operator given by
 \[\TTM(f)[\beta](\xi):=\frac{1}{4\pi}\PV\int_\RRM\frac{1}{|r|^4}
\begin{pmatrix}
 - r_2^3-3r_1^2r_2& r_1^3- r_1 r_2^2\\
r_1^3-r_1 r_2^2&  r_1^2 r_2- r_2^3
 \end{pmatrix}
 \left(\begin{array}{c}\beta_1\\\beta_2\end{array}\right)\,ds.\]
 Observe that $\TTM(f)$ is skew-adjoint on $L_2(\RRM)^2$, i.e. $\TTM(f)^\ast=-\TTM(f)$, and therefore
 \be\label{Tskew}
 \langle\TTM(f)[\beta]\,|\,\beta\rangle_2=0.
 \ee
Here $\langle\cdot\,|\,\cdot\rangle_2$ denotes the inner product of $L_2(\RRM)^2$.

Moreover, for the normal stress at the boundary we find
\be\label{stressbdry}
\omega( T_1(u,\Pi)^\pm\circ\Xi)\nu=\Big(\mp\frac{1}{2}-\bD(f)^\ast\Big)[\beta].
\ee
For convenience we introduce the notation 
\[\tau_{ij}:=(T_1(u,\Pi))_{ij}=-\Pi\delta_{ij}+\partial_i u_j+\partial_j u_i,\qquad i,j=1,2,\]
and observe that due to \eqref{Sto34}  
\[\partial_i\tau_{ij}=0 \quad \mbox{in $\Omega^\pm$, $j=1,\, 2,$}\qquad\text{and}\qquad\delta_{ij}\partial_iu_j=0\qquad \text{in $\Omega^\pm$.}\]
The latter identities lead us to 
\[\partial_i(\tau_{ij}\partial_2u_j)=\tau_{ij}\partial_i\partial_2u_j
=(\partial_iu_j+\partial_ju_i)\partial_2\partial_iu_j=\frac{1}{4} \sum_{i,\,j=1}^2\partial_2(\partial_iu_j+\partial_ju_i)^2\qquad \text{in $\Omega^\pm$.}\]
 In view of \eqref{decay} we may  integrate the latter relation  over $\Omega^\pm$ and using  Gauss' theorem  and~\eqref{stressbdry} we get
\be\label{rellich}
\int_\Gamma\frac{1}{\omega} \sum_{i,\,j=1}^2(\partial_iu_j^\pm+\partial_ju_i^\pm)^2\,d\Gamma
=4\int_\Gamma\tau_{ij}^\pm\tilde\nu_i\partial_2u_j^\pm\,d\Gamma
=4\Big\langle\Big(\mp\frac{1}{2}-\bD(f)^\ast\Big)[\beta]\,\Big|\,\partial_2 u^\pm\circ\Xi\Big\rangle_2.
\ee

To estimate the term on the left we observe that the Cauchy-Schwarz inequality and~${|\tilde\nu|=1}$ yield
\[ \sum_{i,\,j=1}^2(\partial_iu_j^\pm+\partial_ju_i^\pm)^2\geq
 \sum_{i=1}^2((\partial_iu_j^\pm+\partial_ju_i^\pm)\tilde\nu_j)^2
=\sum_{i=1}^2(\tau_{ij}^\pm\tilde\nu_j+\Pi^\pm\tilde\nu_i)^2\qquad
\text{on $\Gamma$.}\]
This inequality, the  representations   \eqref{repp} and  \eqref{stressbdry}, and    $\|\bB_i(f)\|_{\kL(L_2(\R))}\leq  C(K)$, $i=1,\, 2$,  cf. Lemma~\ref{L:MP0'},  now yield
\begin{align*}
&\hspace{-0.5cm}\int_\Gamma\frac{1}{\omega}\sum_{i,j=1}^2(\partial_iu_j^\pm+\partial_ju_i^\pm)^2\,d\Gamma\\[1ex]
&\geq\Big\|\frac{1}{\omega}\Big(\mp\frac{1}{2}-\bD(f)^\ast\Big)[\beta]+(\Pi^\pm\circ\Xi)\nu \Big\|_2^2\\[1ex]
&=\Big\|\frac{1}{\omega}\Big(\lambda-\bD(f)^\ast\Big)[\beta]
-\frac{1}{\omega}\Big(\lambda\pm\frac{1}{2}\Big)\beta
+(\Pi^\pm\circ\Xi)\nu \Big\|_2^2\\[1ex]
&\geq\Big\|-\frac{1}{\omega}\Big(\lambda\pm\frac{1}{2}\Big)\beta
+\Big(\frac{1}{2}\bB_1(f)[\omega^{-1}\beta\cdot\nu]
+\frac{1}{2}\bB_2(f)[\omega^{-1}\beta\cdot\tau]\pm\frac{\beta\cdot\nu}{2\omega}\Big)\nu\Big\|_2^2\\[1ex]
&\hspace{0.45cm}+\Big\|\frac{1}{\omega}(\lambda-\bD(f)^\ast)[\beta]\Big\|_2^2
-C\|(\lambda-\bD(f)^\ast)[\beta]\|_2\|\|\beta\|_2
\\[1ex]
&\geq\Big(\lambda\pm\frac{1}{2}\Big)^2\|\omega^{-1}\beta\cdot\tau\|_2^2
+\Big\|\Big(\lambda-\frac{1}{2}\bB_1(f)\Big)[\omega^{-1}\beta\cdot\nu]
-\frac{1}{2}\bB_2(f)[\omega^{-1}\beta\cdot\tau]\Big\|_2^2\\[1ex]
&\hspace{0.45cm}-C\|(\lambda-\bD(f)^\ast)[\beta]\|_2\|\|\beta\|_2
\end{align*}
for any $\lambda\in[-K,K]$. 

We next consider the term on the right of \eqref{rellich}.   
As a direct consequence of Lemma~\ref{L:MP0'}  we note that  $\|\TTM(f)\|_{\kL(L_2(\R)^2)}\leq C=C(K)$.
This bound together with~\eqref{d2urep} and~\eqref{Tskew}  implies
\begin{align*}
    4\Big\langle\Big(\mp\frac{1}{2}-\bD(f)^\ast\Big)[\beta]\,\Big|
    \,\partial_2u\circ\Xi\Big\rangle_ 2
    &= 4\Big\langle\Big.\Big(\lambda-\bD(f)^\ast\Big)[\beta]
    -\Big(\lambda\pm\frac{1}{2}\Big)\beta\,\Big|\TTM[\beta]\mp
    \frac{(\beta\cdot\tau)\tau}{2\omega^2}\Big\rangle_2\\[1ex]
    &\leq C\|(\lambda-\bD(f)^\ast)[\beta]\|_2\|\|\beta\|_2
    \pm2\Big(\lambda\pm\frac{1}{2}\Big)\|\omega^{-1}\beta\cdot\tau\|_2^2.
\end{align*}
 For $f\in {\rm C}^\infty(\R)$, the estimate~\eqref{lemma-rel} follows  from \eqref{rellich} and the latter estimates upon rearranging terms and a standard density argument.
 For general $f\in{\rm C^1}(\R)$ we additionally need to use the continuity of the mappings 
 \[[f\mapsto \bD(f)^*]:{\rm C^1}(\R)\to\kL(L_2(\R)^2)\qquad\text{and}\qquad [f\mapsto \bB_{i}(f)]:{\rm C^1}(\R)\to\kL(L_2(\R)), \,\, i=1,\, 2,\]
 which are direct consequences of Lemma~\ref{L:MP0'}, together  with the density of  $ {\rm C}^\infty(\R)$ in $ {\rm C^1}(\R)$.
\end{proof}

Based on Lemma~\ref{lemrel} we now establish the following result.   
\begin{thm}[Spectral properties of $\bD(f)$ and $\bD(f)^\ast$]\label{L2spec}
Given  $\delta\in(0,1)$, there exists a constant~$C=C(\delta)>0$ such that for all $\lambda\in\RRM$ with $|\lambda|\geq1/2+\delta$ and~$f\in {\rm C}^1(\RRM)$ with~${\|f'\|_\infty\leq 1/\delta}$ we have 
\begin{align}\label{DEest}
\|(\lambda-\bD(f)^\ast)[\beta]\|_2\geq  C\|\beta\|_2\qquad\text{for all $\beta\in L_2(\RRM)^2$}.
\end{align}
Moreover, $\lambda-\bD(f)^\ast$, $\lambda-\bD(f)\in \kL(L_2(\RRM)^2)$ are isomorphisms for all  $\lambda\in\RRM$ with $|\lambda|>1/2$ and~$f\in {\rm C}^1(\RRM)$.
\end{thm}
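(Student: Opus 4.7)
The strategy is to first use the Rellich-based Lemma~\ref{lemrel} to establish the quantitative lower bound~\eqref{DEest}, and then to upgrade injectivity with closed range to full invertibility by combining a Neumann-series argument for large $|\lambda|$ with Kato's stability of the semi-Fredholm index.

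For~\eqref{DEest}, fix $\delta\in(0,1)$ and $\lambda\in\R$ with $|\lambda|\geq 1/2+\delta$. Checking~\eqref{defmlam} directly gives $m(\lambda)\geq m_\delta:=\delta(2+\delta)>0$, so the second term on the right-hand side of~\eqref{lemma-rel} already yields uniform control of the tangential piece:
\[
m_\delta\|\omega^{-1}\beta\cdot\tau\|_2^2\leq C\|(\lambda-\bD(f)^*)[\beta]\|_2\|\beta\|_2.
\]
To recover the normal component, I would invoke the $L_2$-spectral result for the Muskat double-layer operator $\tfrac12\bB_1(f)$ established in~\cite{MBV18} via the Rellich identity referenced in the introduction: for $|\lambda|\geq 1/2+\delta$ and $\|f'\|_\infty\leq 1/\delta$, the operator $\lambda-\tfrac12\bB_1(f)$ is an isomorphism of $L_2(\R)$ with a bound $\|(\lambda-\tfrac12\bB_1(f))^{-1}\|_{\kL(L_2)}\leq M(\delta)$ that is uniform in $f$ and $\lambda$. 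Applying the weighted Young inequality $\|a-b\|^2\geq (1-\varepsilon)\|a\|^2-(\varepsilon^{-1}-1)\|b\|^2$ to $a=(\lambda-\tfrac12\bB_1(f))[\omega^{-1}\beta\cdot\nu]$ and $b=\tfrac12\bB_2(f)[\omega^{-1}\beta\cdot\tau]$, with $\varepsilon$ tuned so that $(\varepsilon^{-1}-1)\|\tfrac12\bB_2(f)\|_{\kL(L_2)}^2<m_\delta$ (which is possible for $\varepsilon$ close to $1$, using the bound on $\bB_2(f)$ from Lemma~\ref{L:MP0'}), the first term on the right-hand side of~\eqref{lemma-rel} delivers a lower bound on $\|\omega^{-1}\beta\cdot\nu\|_2^2$, while the unwanted $\|\omega^{-1}\beta\cdot\tau\|_2^2$ piece is absorbed into $m(\lambda)\|\omega^{-1}\beta\cdot\tau\|_2^2$. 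Since $\{\nu,\tau\}$ is pointwise orthonormal and $1\leq\omega\leq(1+\delta^{-2})^{1/2}$, combining the two bounds yields $C(\delta)\|\beta\|_2^2\leq\|(\lambda-\bD(f)^*)[\beta]\|_2\|\beta\|_2$, which is~\eqref{DEest}.

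With~\eqref{DEest} in hand, $\lambda-\bD(f)^*$ is injective with closed range, hence upper semi-Fredholm, for every $|\lambda|\geq 1/2+\delta$. Lemma~\ref{L:MP0'} applied to~\eqref{DFB} also supplies a bound $\|\bD(f)^*\|_{\kL(L_2(\R)^2)}\leq C(\delta)$, so a Neumann series produces invertibility at every $\lambda$ with $|\lambda|>C(\delta)$. The family $\lambda\mapsto\lambda-\bD(f)^*$ is trivially norm continuous, and on each of the connected components $(1/2+\delta,\infty)$ and $(-\infty,-1/2-\delta)$ of $\{|\lambda|\geq 1/2+\delta\}$ it consists of upper semi-Fredholm operators with zero kernel; Kato's stability of the semi-Fredholm index then forces the integer index to be constant on each component and equal to its value at large $|\lambda|$, which is $0$. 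Combined with injectivity, this means $\lambda-\bD(f)^*$ is an isomorphism for all $|\lambda|\geq 1/2+\delta$. Since $\delta\in(0,1)$ is arbitrary, this covers every $\lambda$ with $|\lambda|>1/2$ and every $f\in{\rm C}^1(\R)$ (choose $\delta$ in terms of $\|f'\|_\infty$ and $|\lambda|-1/2$). The statement for $\lambda-\bD(f)$ is then immediate, since $\bD(f)$ and $\bD(f)^*$ are $L_2$-adjoints and invertibility of a bounded operator on a Hilbert space passes to its adjoint.

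The main obstacle is the tuning of the Young inequality in the middle step. The coefficient of $\|\omega^{-1}\beta\cdot\tau\|_2^2$ produced by the cross term involves $\|\bB_2(f)\|_{\kL(L_2)}^2$, which does not become small as $|\lambda|\to 1/2$, whereas the absorbing quantity $m(\lambda)$ does vanish there; consequently the crude bound $\|a-b\|^2\geq\tfrac12\|a\|^2-\|b\|^2$ is insufficient, and the absorption has to be made quantitative in $\delta$, which is exactly where the Muskat spectral information on $\tfrac12\bB_1(f)$ enters to extract the principal $\|\omega^{-1}\beta\cdot\nu\|_2^2$ bound with a coefficient comparable to $m_\delta$.
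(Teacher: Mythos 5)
Your argument is correct and relies on the same two pillars as the paper's proof: the Rellich inequality of Lemma~\ref{lemrel} and the uniform $L_2$-invertibility of $\lambda-\tfrac12\bB_1(f)$ (equivalently of $2\lambda-\bA(f)$ with $\bA(f)=\bB_1(f)^*$) from the Muskat analysis in~\cite{MBV18}. Where you differ is in packaging. For the lower bound~\eqref{DEest}, the paper argues by contradiction: it takes sequences $(\lambda_k,f_k,\beta_k)$ violating the estimate, reads off from Lemma~\ref{lemrel} that $\omega_k^{-1}\beta_k\cdot\tau\to 0$ and then that $(\lambda_k-\tfrac12\bB_1(f_k))[\omega_k^{-1}\beta_k\cdot\nu]\to 0$, and finishes by the uniform inverse bound; you instead run the same chain forward, quantitatively, via the weighted Young inequality $\|a-b\|^2\ge(1-\varepsilon)\|a\|^2-(\varepsilon^{-1}-1)\|b\|^2$, tuning $\varepsilon$ so that the $\bB_2$-contribution is absorbed by $m(\lambda)\ge\delta(2+\delta)$. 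Both work; yours is more explicit about where the constant $C(\delta)$ degenerates as $\delta\to0$, while the paper's avoids the bookkeeping of the absorption parameter. For the upgrade from the a priori bound to invertibility, the paper invokes the method of continuity (\cite[Prop.~I.1.1.1]{Am95}) starting from a Neumann-series regime, and you invoke Kato's stability of the semi-Fredholm index on each connected component $(\tfrac12+\delta,\infty)$ and $(-\infty,-\tfrac12-\delta)$; these are interchangeable devices here and both correctly conclude index zero from injectivity with uniformly closed range plus invertibility at large $|\lambda|$. The passage to $\bD(f)$ via adjoints, and the final ``choose $\delta$ after the fact'' step to cover all $f\in{\rm C}^1(\R)$ and $|\lambda|>\tfrac12$, match the paper.
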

\begin{proof}
In order to prove \eqref{DEest} we assume  the opposite. 
Then we may find sequences $(\lambda_k)$ in~$\RRM$, $(f_k)$ in~${\rm C}^1(\RRM)$, and $(\beta_k)$ in  $L_2(\RRM)^2$ such that $|\lambda_k|\geq 1/2+\delta,$ $\|f_k'\|_\infty\leq 1/\delta$,
 and~$\|\beta_k\|_2=1$ for all $k\in\N$, and
\[(\lambda_k-\bD(f_k)^\ast)[\beta_k]\to 0\qquad \mbox{ in $L_2(\RRM)^2$.}\]
Given $k\in\N,$ we set $\omega_k:=\omega(f_k)$,  \eqref{nutau}.
 As the operators $\bD(f_k)^\ast$ are bounded, uniformly in~${k\in\N}$, in $\kL(L_2(\R)^2)$, cf. Lemma~\ref{L:MP0'},
 the sequence $(\lambda_k)$
is bounded.
 Observing that for the constant $m=m(\lambda)$ from~\eqref{defmlam} we have $m(\lambda_k)\geq  \delta(2+\delta)>0$ for all $k\in\N$, 
  we get from Lemma~\ref{lemrel}   that
\[\omega_k^{-1}\beta_k\cdot\tau\to 0,\quad \big(\lambda_k-\tfrac{1}{2}\bB_1(f_k)\big)[\omega_k^{-1}\beta_k\cdot\nu]
-\tfrac{1}{2}\bB_2(f_k)[\omega_k^{-1}\beta_k\cdot\tau]\to 0 \qquad\mbox{ in $L_2(\RRM)$.}\]
As the operators $\bB_2(f_k)$ are  bounded, uniformly in $k\in\N$, in $\kL(L_2(\R)^2)$, cf. Lemma~\ref{L:MP0'}, this implies
\[\big(\lambda_k-\tfrac{1}{2}\bB_1(f_k)\big)[\omega_k^{-1}\beta_k\cdot\nu]\to 0\qquad\text{in $L_2(\RRM)$.}\]
Let $\bA(f):=\bB_1(f)^\ast$. 
Since $|2\lambda_k|\geq 1$, it follows from the proof of~\cite[Theorem~3.5]{MBV18} that the operator
$2\lambda_k-\bA(f_k)\in\kL(L_2(\R))$, $k\in\N$, is an isomorphism with
\[\|\left(2\lambda_k-\bA(f_k)\right)^{-1}\|_{\kL(L_2(\R))}\leq C(\delta).\]
This implies that also $2\lambda_k- \bB_1(f_k)\in\kL(L_2(\RRM))$, $k\in\N$, is an isomorphism  and
\[ \big\|(\lambda_k- \tfrac{1}{2}\bB_1(f_k))^{-1}\big\|_{ \kL(L_2(\R))}\leq C(\delta).\]
Thus $\omega_k^{-1}\beta_k\cdot\nu\to 0$ in $L_2(\RRM)$, so that
\[\beta_k=\omega_k\big(\omega_k^{-1}(\beta_k\cdot\nu)\nu+\omega_k^{-1}(\beta_k\cdot\tau)\tau\big)\to 0 \quad\mbox{ in $L_2(\RRM)^2$.}\]
This contradicts the property that $\|\beta_k\|_2=1$ for all $k\in\N$ and \eqref{DEest} follows.

To complete the proof we fix $f\in {\rm C}^1(\RRM)$ and $\lambda_0\in\RRM$ with $|\lambda_0|>1/2$ and we choose~${\delta\in(0,1)}$ such that  $|\lambda_0|\geq 1/2+\delta$ and   $\|f'\|_\infty\leq 1/\delta$.
As $\bD(f)^\ast$ is bounded, $\lambda-\bD(f)^\ast\in\kL(L_2(\RRM)^2)$ is an isomorphism if $|\lambda|$ is sufficiently large. 
The estimate \eqref{DEest} together with a standard continuity argument, cf. e.g. \cite[Proposition~I.1.1.1]{Am95},   now implies that $\lambda_0-\bD(f)^*$ is an  isomorphism as well.
 The result  for~$\bD(f)$ is an immediate consequence of this property. 
\end{proof}
 
%%%%%%%%%%%%%%%%%%%%%%%%%%%%%%%%%%%%%%%%%%%%%%%%%%
%%%%%%%%%%%%%%%%%%%%%%%%%%%%%%%%%%%%%%%%%%%%%%%%%%
%%%%%%%%%%%%%%%%%%%%%%%%%%%%%%%%%%%%%%%%%%%%%%%%%%
%%%%%%%%%%%%%%%%%%%%%%%%%%%%%%%%%%%%%%%%%%%%%%%%%%
\section{The  resolvent of the hydrodynamic double-layer potential operator  in higher order Sobolev spaces}\label {Sec:11}
%%%%%%%%%%%%%%%%%%%%%%%%%%%%%%%%%%%%%%%%%%%%%%%%%%
%%%%%%%%%%%%%%%%%%%%%%%%%%%%%%%%%%%%%%%%%%%%%%%%%%
%%%%%%%%%%%%%%%%%%%%%%%%%%%%%%%%%%%%%%%%%%%%%%%%%%
%%%%%%%%%%%%%%%%%%%%%%%%%%%%%%%%%%%%%%%%%%%%%%%%%%
The main goal of this section is to establish spectral properties for $\bD(f)$, parallel to those in Theorem \ref{L2spec}, in the spaces $H^{s-1}(\RRM)^2$, $s\in(3/2/2)$, 
and in $H^2(\RRM)^2$. 
The latter are needed  when solving the  fixed-time problem  \eqref{probintFT}, see Proposition~\ref{P:STOsol}, and the former are used to  derive and  study  
the  contour integral formulation~\eqref{NNEP} of the evolution problem~\eqref{STOKES}.

For this purpose, we first recall some further results on the  singular integral  operators  $B_{n,m}$ introduced in \eqref{BNM}. 
\begin{lemma}\label{L:MP0}
$$
$$

\vspace{-0.7cm}
\begin{itemize}
 \item[(i)] Let   $n\geq1,$  $s\in(3/2,2),$  and  $a_1,\ldots, a_m\in H^s(\RRM)$ be given.  
  Then, there exists a constant~$C$, depending only on $n,\, m$, $s$,  and $\max_{1\leq i\leq m}\|a_i\|_{H^s}$, such that
\begin{align} 
&\| B_{n,m}(a_1,\ldots, a_{m})[b_1,\ldots, b_n,h]\|_2\leq C\|b_1\|_{H^1}\|h\|_{H^{s-1}}\prod_{i=2}^{n}\|b_i\|_{H^s} \label{REF1}
\end{align}
for all $b_1,\ldots, b_n\in H^s(\R)$ and $h\in H^{s-1}(\R).$

Moreover,   $B_{n,m}\!\in\! {\rm C}^{1-}(H^s(\R)^m,\kL^{n+1}( H^1(\R), H^{s}(\R),\ldots,H^s(\R),H^{s-1}(\R); L_2(\R))).$\\[-1ex] 

 \item[(ii)] Given $s\in(3/2 ,2)$ and $a_1,\ldots, a_m \in H^s(\mathbb{R})$, there exists a constant C,
  depending  only on $n,\, m,\, s$,  and $\max_{1\leq i\leq m}\|a_i\|_{H^s},$ such that
\begin{align*} 
\| B_{n,m}(a_1,\ldots, a_{m})[b_1,\ldots, b_n,h]\|_{H^{s-1}}\leq C \|h\|_{H^{s-1}}\prod_{i=1}^{n}\|b_i\|_{H^{s}}
\end{align*}
for all $ b_1,\ldots, b_n\in H^s(\mathbb{R})$ and $h\in H^{s-1}(\mathbb{R}). $

Moreover,  $  B_{n,m}\in {\rm C}^{1-}(H^s(\mathbb{R})^m,\kL^{n}_{\rm sym}( H^s(\mathbb{R}) , \kL(H^{s-1}(\mathbb{R})))).$ \\[-1ex]

\item[(iii)] Let $n\geq 1$, $3/2<s'<s<2$, and $a_1,\ldots, a_m \in H^s(\mathbb{R})$ be given. 
  Then, there exists a constant  $C$, depending only on $n,\, m$, $s$, $s'$,  and $\max_{1\leq i\leq m}\|a_i\|_{H^s}$, such that
\begin{equation*} 
\begin{aligned} 
&\| B_{n,m}(a_1,\ldots, a_{m})[b_1,\ldots, b_n,h] -h B_{n-1,m}(a_1,\ldots, a_{m})[b_2,\ldots, b_n,b_1']\|_{H^{s-1}}\\[1ex]
&\hspace{3cm}\leq C \|b_1\|_{H^{s'}}\|h\|_{H^{s-1}}\prod_{i=2}^{n}\|b_i\|_{H^s}
\end{aligned}
\end{equation*}
for all $b_1,\ldots, b_n\in H^s(\mathbb{R})$ and $h\in H^{s-1}(\mathbb{R}).$
\end{itemize}
\end{lemma}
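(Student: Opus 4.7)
The plan is to decompose the difference $T := B_{n,m}(a_1,\ldots,a_m)[b_1,\ldots,b_n,h] - h\cdot B_{n-1,m}(a_1,\ldots,a_m)[b_2,\ldots,b_n,b_1']$ algebraically into a piece exhibiting first-order cancellation in $b_1$ and a commutator piece, and then to estimate each by reduction to parts~(i) and~(ii) of the present lemma.

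First I would combine the two summands into a single principal-value integral,
\[
T(\xi) = \PV\int_\R \frac{h(\xi-\eta)\,\delta_{[\xi,\eta]}b_1/\eta - h(\xi)\,b_1'(\xi-\eta)}{\eta}\,\tilde K(\xi,\eta)\,d\eta,
\]
where $\tilde K(\xi,\eta) := \prod_{i=2}^n(\delta_{[\xi,\eta]}b_i/\eta)\bigl/\prod_{j=1}^m[1+(\delta_{[\xi,\eta]}a_j/\eta)^2]$. The algebraic identity
\[
h(\xi-\eta)\tfrac{\delta_{[\xi,\eta]}b_1}{\eta} - h(\xi)b_1'(\xi-\eta) = h(\xi-\eta)\bigl(\tfrac{\delta_{[\xi,\eta]}b_1}{\eta} - b_1'(\xi-\eta)\bigr) - \delta_{[\xi,\eta]}h\cdot b_1'(\xi-\eta)
\]
then induces a splitting $T = T_1 + T_2$, where $T_2$ is the commutator $T_2 = B_{n-1,m}(a_1,\ldots,a_m)[b_2,\ldots,b_n,hb_1'] - h\cdot B_{n-1,m}(a_1,\ldots,a_m)[b_2,\ldots,b_n,b_1']$.

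For $T_1$, I would apply the fundamental theorem of calculus to write $\delta_{[\xi,\eta]}b_1/\eta - b_1'(\xi-\eta) = \int_0^1[b_1'(\xi-s\eta)-b_1'(\xi-\eta)]\,ds$, and use the Sobolev embedding $H^{s'-1}(\R)\hookrightarrow {\rm C}^{s'-3/2}(\R)$ (valid for $s'>3/2$) to bound this cancellation factor pointwise by $C\|b_1\|_{H^{s'}}|\eta|^{s'-3/2}$. The resulting integral operator on $h$ has an effective kernel of order $s'-5/2 \in (-1,-1/2)$, i.e.\ a fractional integration of order $s'-3/2>0$, which maps $H^{s-1}$ continuously into $H^{s-1+(s'-3/2)} \hookrightarrow H^{s-1}$; combined with the $L_2$-bounds of Lemma~\ref{L:MP0'} for the $\tilde K$-factors, this yields the claimed estimate for $T_1$. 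For $T_2$, I would apply part~(ii) of the lemma to each of the two constituent $B_{n-1,m}$-terms, together with the Kato--Ponce product estimate and the Sobolev embedding $H^{s'-1}\hookrightarrow L_\infty$ to control $\|hb_1'\|_{H^{s-1}}$ in terms of $\|h\|_{H^{s-1}}\|b_1\|_{H^{s'}}$.

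The hard part is the $\|h\|_{L_\infty}\|b_1'\|_{H^{s-1}}$ contribution that would normally appear in the Kato--Ponce bound for $T_2$: since $b_1'$ lies only in $H^{s'-1}$ with $s'<s$, this contribution cannot be bounded directly. To circumvent it, the commutator structure of $T_2$ must be preserved and a Littlewood--Paley paraproduct decomposition invoked, pairing the high frequencies of $b_1'$ against the low frequencies of $h$ so that only $\|b_1'\|_{L_\infty}\lesssim\|b_1\|_{H^{s'}}$ is actually used. This Calder\'on-type commutator estimate is the technical core of the argument; its implementation can ultimately be reduced to the $L_2$-bounds of Lemma~\ref{L:MP0'} applied to residual singular integrals obtained after commutation with fractional derivatives.
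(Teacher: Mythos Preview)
The paper does not prove this lemma itself; it cites \cite[Lemma~3.2]{MBV18} for~(i) and \cite[Lemmas~5,~6]{AM21x} for~(ii) and~(iii). So there is no detailed in-paper argument to compare against, and your outline for~(iii) already attempts more than the paper does. The splitting $T=T_1+T_2$ via
\[
h(\xi-\eta)\tfrac{\delta_{[\xi,\eta]}b_1}{\eta}-h(\xi)b_1'(\xi-\eta)
=h(\xi-\eta)\Bigl(\tfrac{\delta_{[\xi,\eta]}b_1}{\eta}-b_1'(\xi-\eta)\Bigr)-\delta_{[\xi,\eta]}h\cdot b_1'(\xi-\eta)
\]
is correct and natural.

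There is, however, a real gap in your treatment of $T_1$. The pointwise bound $|\delta_{[\xi,\eta]}b_1/\eta-b_1'(\xi-\eta)|\le C\|b_1\|_{H^{s'}}|\eta|^{s'-3/2}$ controls the kernel only near $\eta=0$, and the resulting operator is not a convolution because $\tilde K$ depends on $\xi$. More seriously, even the pure Riesz potential $I_{s'-3/2}$ (convolution with $|\eta|^{s'-5/2}$) does \emph{not} map $H^{s-1}(\R)$ into $H^{s-1+(s'-3/2)}(\R)$: its Fourier multiplier $|\xi|^{-(s'-3/2)}$ is singular at $\xi=0$, so the low-frequency part of the image fails to lie in any inhomogeneous Sobolev space. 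Hence the chain ``$H^{s-1}\to H^{s-1+(s'-3/2)}\hookrightarrow H^{s-1}$'' breaks down, and the $H^{s-1}$ bound on $T_1$ cannot be read off from a kernel-size estimate alone. You would still need to commute fractional derivatives (or differences) past the full $B_{n,m}$-structure---precisely the difficulty you postpone to $T_2$.

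For $T_2$ you correctly isolate the obstruction (the $\|h\|_\infty\|b_1'\|_{H^{s-1}}$ term in Kato--Ponce) and propose a paraproduct/Calder\'on-commutator cure, but this remains entirely schematic and imports machinery foreign to the paper. The argument in the cited reference---and the mechanism visible in this paper, e.g.\ in the proof of Theorem~\ref{T:isom2}---proceeds differently: one uses the intrinsic seminorm $[u]_{W^{s-1}_2}^2\sim\int_\R\|\tau_\xi u-u\|_2^2|\xi|^{-1-2(s-1)}\,d\xi$, the shift invariance~\eqref{invar}, and the telescoping identity~\eqref{diffid} to reduce the $H^{s-1}$ estimate directly to the $L_2$ bounds of Lemma~\ref{L:MP0'} and part~(i). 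That route sidesteps both the low-frequency issue and the need for Littlewood--Paley theory.
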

\begin{proof}
The claims  (i) is established in \cite[Lemmas~3.2]{MBV18}, while (ii) and (iii) are proven in \cite[Lemma~5 and~ Lemma 6]{AM21x}.
\end{proof}

For $\xi\in\RRM$ we define the left shift operator $\tau_\xi$ on $L_2(\RRM)$ by $\tau_\xi u(x):=u(x+\xi)$ and observe the invariance property
\begin{equation}
    \label{invar}
\tau_\xi B_{n,m}(a_1,\ldots,a_m)[b_1,\ldots,b_n,h]
=B_{n,m}(\tau_\xi a_1,\ldots,\tau_\xi a_m)[\tau_\xi b_1,\ldots,\tau_\xi b_n,
\tau_\xi h].
\end{equation}
Differences of $B_{n,m}$ with respect to the nonlinear arguments $a_i$ can be represented by the identity
\begin{equation}\label{diffid}
\begin{aligned}
&B_{n,m}(a_1,a_2\ldots,a_m)[b_1,\ldots,b_n,\cdot]-
B_{n,m}(\tilde a_1,a_2\ldots,a_m)[b_1,\ldots,b_n,\cdot]\\\
&\hspace{0.5cm}=B_{n+2,m+1}( \tilde a_1,a_1,a_2\ldots,a_m)[b_1,\ldots,b_n,\tilde a_1+a_1, \tilde a_1-a_1,\cdot].
\end{aligned}
\end{equation}
We will also use the interpolation property
\begin{align}\label{IP}
[H^{s_0}(\mathbb{R}),H^{s_1}(\mathbb{R})]_\theta=H^{(1-\theta)s_0+\theta s_1}(\mathbb{R}),\qquad\theta\in(0,1),\, -\infty< s_0\leq s_1<\infty,
\end{align}
where $[\cdot,\cdot]_\theta$ denotes the complex interpolation functor of exponent $\theta$.

\begin{thm}\label{T:isom2}
Given $\delta\in(0,1)$ and $s\in(3/2,2)$, there exists a constant $C=C(\delta,s)>0$ such that  
\begin{align}\label{Cdeltas}
\|(\lambda-\bD(f))[\beta]\|_{H^{s-1}}\geq C\|\beta\|_{H^{s-1}}
\end{align}
for all   $\lambda\in\R$ with $|\lambda|\geq 1/2+\delta$, $f\in H^s(\R)$ with $\|f\|_{H^s}\leq 1/\delta,$ and $\beta\in H^{s-1}(\R)^2$.

Moreover,  $\lambda-\bD(f)\in\kL(H^{s-1}(\R)^2)$ is an isomorphism  for all  $\lambda\in\R$ with~$|\lambda|> 1/2$ and~$f\in H^s(\R)$.
\end{thm}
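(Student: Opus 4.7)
The plan is to establish the a priori estimate \eqref{Cdeltas} first and then infer the isomorphism claim by the method of continuity, in the spirit of the closing argument of Theorem~\ref{L2spec}. Since $s-1\in(1/2,1)$, I would work with the Slobodeckij seminorm
\[
\|\beta\|_{H^{s-1}}^2\sim \|\beta\|_2^2+[\beta]_{s-1}^2,
\qquad
[\beta]_{s-1}^2:=\int_\R\frac{\|\tau_\xi\beta-\beta\|_2^2}{|\xi|^{1+2(s-1)}}\,d\xi,
\]
where $\tau_\xi$ is the shift operator from \eqref{invar}. Setting $g:=(\lambda-\bD(f))[\beta]$, the shift invariance \eqref{invar} applied termwise in \eqref{DFB} gives $\tau_\xi\bD(f)[\beta]=\bD(\tau_\xi f)[\tau_\xi\beta]$, hence
\[
(\lambda-\bD(\tau_\xi f))[\tau_\xi\beta-\beta]=(\tau_\xi g-g)+(\bD(\tau_\xi f)-\bD(f))[\beta].
\]
Because $\|\tau_\xi f'\|_\infty=\|f'\|_\infty$ and $H^s(\R)\hookrightarrow {\rm C}^1(\R)$ for $s>3/2$, the hypothesis of Theorem~\ref{L2spec} is preserved under translation and supplies a $\xi$-uniform $L_2$-lower bound for $\lambda-\bD(\tau_\xi f)$, yielding
\[
\|\tau_\xi\beta-\beta\|_2\leq C\|\tau_\xi g-g\|_2+C\|(\bD(\tau_\xi f)-\bD(f))[\beta]\|_2.
\]
Squaring, dividing by $|\xi|^{1+2(s-1)}$, and integrating reduces the proof of \eqref{Cdeltas} to an estimate for
\[
J:=\int_\R\frac{\|(\bD(\tau_\xi f)-\bD(f))[\beta]\|_2^2}{|\xi|^{1+2(s-1)}}\,d\xi,
\]
which, together with $\|\beta\|_2\leq C\|g\|_2$ from Theorem~\ref{L2spec}, closes the argument.

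The main technical obstacle is the control of $J$. The plan is to expand $\bD(\tau_\xi f)-\bD(f)$ iteratively via the difference identity \eqref{diffid}, which recasts each $B^0_{n,m}$-summand of \eqref{DFB} as a sum of $B_{n+2,m+1}$-operators carrying $\tau_\xi f-f$ as a distinguished argument, and then estimate each summand via Lemma~\ref{L:MP0}. The direct route through Lemma~\ref{L:MP0}(i) produces only $\|(\bD(\tau_\xi f)-\bD(f))[\beta]\|_2\leq C\|\tau_\xi f-f\|_{H^1}\|\beta\|_{H^{s-1}}$, and together with $\int\|\tau_\xi f-f\|_{H^1}^2/|\xi|^{1+2(s-1)}\,d\xi\leq C\|f\|_{H^s}^2$ this yields $J\leq C\|f\|_{H^s}^2\|\beta\|_{H^{s-1}}^2$, a bound that cannot be absorbed into $[\beta]_{s-1}^2$ for arbitrary $\|f\|_{H^s}$. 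The delicate point is to exploit the sharper commutator estimate of Lemma~\ref{L:MP0}(iii)---in which the distinguished argument $b_1$ enters only through the weaker $\|\cdot\|_{H^{s'}}$-norm with $s'<s$---in tandem with the interpolation identity \eqref{IP}, so as to trade one factor of $\|\beta\|_{H^{s-1}}$ for a factor of $\|\beta\|_2$ on the critical summand, and so obtain an absorbable bound.

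Once \eqref{Cdeltas} is established, the isomorphism follows by the method of continuity. Observe that $\bD(0)\equiv 0$, since the kernel factor $r_1f'-r_2$ in \eqref{defD} vanishes identically at $f\equiv 0$; hence $\lambda-\bD(0)=\lambda\id$ is trivially an isomorphism on $H^{s-1}(\R)^2$. The continuity of $[f\mapsto \bD(f)]:H^s(\R)\to\kL(H^{s-1}(\R)^2)$ is provided by Lemma~\ref{L:MP0}(ii), and along the homotopy $[0,1]\ni t\mapsto \lambda-\bD(tf)$ the bound $\|tf\|_{H^s}\leq\|f\|_{H^s}$ ensures the uniform validity of \eqref{Cdeltas}. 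Applying \cite[Proposition~I.1.1.1]{Am95}, as done at the end of the proof of Theorem~\ref{L2spec}, then yields that $\lambda-\bD(f)$ is an isomorphism on $H^{s-1}(\R)^2$.
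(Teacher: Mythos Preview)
Your overall architecture is exactly the paper's: rewrite the $H^{s-1}$-seminorm via the Slobodeckij integral, use the shift invariance \eqref{invar} together with the $\xi$-uniform $L_2$-resolvent bound from Theorem~\ref{L2spec} to reduce to controlling the integral $J$, and then close by interpolation and the method of continuity. The isomorphism step via the homotopy $t\mapsto\bD(tf)$ is a perfectly valid variant of the paper's continuity argument (which homotopes in $\lambda$ instead, as in Theorem~\ref{L2spec}); your observation that $\bD(0)=0$ is correct.

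The gap is in your treatment of $J$. You are right that applying Lemma~\ref{L:MP0}(i) \emph{at the index $s$} yields $\|(\bD(\tau_\xi f)-\bD(f))[\beta]\|_2\leq C\|\tau_\xi f-f\|_{H^1}\|\beta\|_{H^{s-1}}$, which is not absorbable. But your proposed fix via Lemma~\ref{L:MP0}(iii) does not do what you want: that estimate is an $H^{s-1}$-bound in which the last argument $h$ (here $\beta_i$ or $f'\beta_i$) still enters through $\|h\|_{H^{s-1}}$; the gain to $\|b_1\|_{H^{s'}}$ is on $b_1=\tau_\xi f-f$, so after squaring and integrating against $|\xi|^{-1-2(s-1)}$ you would need control of $\|f\|_{H^{s'+s-1}}$, which exceeds $\|f\|_{H^s}$ since $s'>1$. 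There is no trade of $\|\beta\|_{H^{s-1}}$ for $\|\beta\|_2$ to be had along this route.

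The paper's resolution is much simpler and uses only Lemma~\ref{L:MP0}(i), but applied \emph{at the intermediate index $s'\in(3/2,s)$ rather than at $s$}. After expanding $\bD(\tau_\xi f)-\bD(f)$ via \eqref{diffid} exactly as you describe, Lemma~\ref{L:MP0}(i) with $s$ replaced by $s'$ gives
\[
\|(\bD(\tau_\xi f)-\bD(f))[\beta]\|_2\leq C\|\tau_\xi f-f\|_{H^1}\|\beta\|_{H^{s'-1}},
\]
hence $J\leq C\|f\|_{H^s}^2\|\beta\|_{H^{s'-1}}^2$. Now the interpolation \eqref{IP} yields $\|\beta\|_{H^{s'-1}}^2\leq \tfrac12\|\beta\|_{H^{s-1}}^2+C\|\beta\|_2^2$; the first term is absorbed on the left and the second is handled by another application of Theorem~\ref{L2spec}. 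This is the missing idea in your argument.
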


\begin{proof} 
Given $f\in H^s(\RRM)$, the relation \eqref{DFB} and Lemma \ref{L:MP0}~(ii) imply  ${\bD(f)\in\kL(H^{s-1}(\R)^2)}$.
In order to prove \eqref{Cdeltas}, let $\lambda\in\R$ with $|\lambda|\geq 1/2+\delta$ and $f\in H^s(\R)$ with $\|f\|_{H^s}\leq 1/\delta $ be fixed. 
Theorem~\ref{L2spec} together with the embedding $H^s(\R)\hookrightarrow L_\infty(\R)$ implies there exists~${C=C(\delta)>0}$ such   
that $\|(\lambda-\bD(\tau_\xi f))^{-1}\|_{\kL(L_2(\R)^2)}\leq C$ for all $\xi\in\R$.
It is well-known there exists a constant~${C>0}$ such that   
$$[\beta]_{H^{s-1}}:=\|[\xi\mapsto |\xi|^{s-1}\mathcal{F}[\beta](\xi)]\|_2=C\Big(\int_{\R}\frac{\|\beta-\tau_{\xi}\beta\|_2^2}{|\xi|^{1+2(s-1)}}\, d{\xi}\Big)^{1/2}=:[\beta]_{W^{s-1}_2},$$ 
where  $\mathcal{F}[\beta] $ is the Fourier transform of $\beta$.
Together with  \eqref{invar} we then get 
\begin{equation}\label{DSD0}
\begin{aligned}
{[\beta]}^2_{ H^{s-1}}&\leq C\int_{\R}\frac{\|(\lambda-\bD(\tau_\xi f))[\beta-\tau_{\xi}\beta]\|_2^2}{|\xi|^{1+2(s-1)}}\, d{\xi}\\[1ex]
&\leq C\Big(\int_{\R}\frac{\|(\lambda-\bD( f))[\beta]-\tau_{\xi}((\lambda-\bD( f))[\beta])\|_2^2}{|\xi|^{1+2(s-1)}}\, d{\xi}
+\int_{\R}\frac{\|(\bD(f) -\bD(\tau_{\xi}f))[\beta]\|_2^2}{|\xi|^{1+2(s-1)}}\, d{\xi}\Big)\\[1ex]
&= C[(\lambda-\bD( f))[\beta]]^2_{H^{s-1}}+C\int_{\R}\frac{\|(\bD(f) -\bD(\tau_{\xi}f))[\beta]\|_2^2}{|\xi|^{1+2(s-1)}}\, d{\xi}.
\end{aligned}
\end{equation}

The term $\|(\bD(f) -\bD(\tau_{\xi}f))[\beta]\|_2$ can be estimated by a finite sum of terms of the form
\[
\|(B_{n,2}^0(f)-B_{n,2}^0(\tau_\xi f))[\beta_i]\|_2\qquad \text{and}\qquad \|B_{n,2}^0(f)[f'\beta_i]-B_{n,2}^0(\tau_\xi f)[(\tau_\xi f')\beta_i]\|_2,
\]
where $0\leq n\leq 3$ and $i\in\{1,2\}$. Let $s'\in(3/2,s)$ be fixed. 
We first consider terms of the second type and estimate  in view of Lemma \ref{L:MP0'} 
\begin{equation}\label{DSD1}
\begin{aligned}
&\hspace{-0.5cm}\|B_{n, 2}^0(f)[f'\beta_i]-B_{n,2}^0(\tau_\xi f)[(\tau_\xi f')\beta_i]\|_2\\[1ex]
&\leq \|(B_{n,2}^0(f)-B_{n,2}^0(\tau_\xi f))[f'\beta_i]\|_2+\|B_{n,2}^0(\tau_\xi f)[(\tau_\xi f'-f')\beta_i]\|_2\\[1ex]
&\leq \|(B_{n,2}^0(f)-B_{n,2}^0(\tau_\xi f))[f'\beta_i]\|_2+C\|\tau_\xi f'-f'\|_2\|\beta\|_{H^{s'-1}}.
\end{aligned}
\end{equation}
Furthermore,  using \eqref{diffid}, we have
\begin{align*}
    B_{n,2}^0(f)-B_{n,2}^0(\tau_\xi f)
    =& \sum_{\ell=1}^n  B_{n,2}(f,f)[\underbrace{\tau_\xi f,\ldots,\tau_\xi f}_{ \ell-1 {\ \rm times}},f-\tau_\xi f,f,\ldots, f,\cdot]\\
    &+ B_{n+2,3}(\tau_\xi f,f,f)[\tau_\xi f,\ldots, \tau_\xi f, \tau_\xi-f,\tau_\xi f+f,\cdot]\\
    &+ B_{n+2,3}(\tau_\xi f,\tau_\xi f,f)[\tau_\xi f,\ldots, \tau_\xi f,\tau_\xi f-f,\tau_\xi f+f,\cdot],
\end{align*}
and together with Lemma \ref{L:MP0}~(i) (with $s'$ instead of $s$), we conclude that $B_{n,2}^0(f)-B_{n,2}^0(\tau_\xi f)$ belongs to $\kL(H^{s'-1}(\R),L_2(\RRM))$ and satisfies
\[
\|B_{n,2}^0(f)-B_{n,2}^0(\tau_\xi f)\|_{\kL(H^{s'-1}(\R),L_2(\RRM))}\leq C\|f-\tau_\xi f\|_{H^1(\RRM)}.
\]
Combining this estimate  with \eqref{DSD1} we get
\[\int_{\R}\frac{\|(\bD(f) -\bD(\tau_{\xi}f))[\beta]\|_2^2}{|\xi|^{1+2(s-1)}}\, d{\xi}\leq C\|f\|_{H^s}^2\|\beta\|_{H^{s'-1}}^2,\]
and by \eqref{DSD0} and the interpolation property \eqref{IP} we arrive at
\[\|\beta\|^2_{H^{s-1}}\leq C\left([\lambda-\bD(f)[\beta]]^2_{H^{s-1}}+\|\beta\|_2^2\right)
+\frac{1}{2}\|\beta\|_{H^{s-1}}^2.\]
Finally, using Theorem \ref{L2spec} again, we obtain the estimate \eqref{Cdeltas}.
 The isomorphism property of~${\lambda-\bD(f)}$, with $\lambda\in\R$ with~$|\lambda|> 1/2$ and~$f\in H^s(\R)$,  follows by the same continuity argument as in the $L_2$ result. 
\end{proof}

For the $H^2$ result we need an additional estimate for the operators $B_{n,m}$  with higher regularity of the arguments.

\begin{lemma}\label{L:MP1}
 Let  $ n,\, m\in\N$ and  $a_1,\ldots, a_m\in H^2(\R)$   be given. 
  Then, there exists a constant~$C$, depending  only on~$n,\, m$,  and $\max_{1\leq i\leq m}\|a_i\|_{H^2}$, such that
\begin{align} 
\| B_{n,m}(a_1,\ldots, a_{m})[b_1,\ldots, b_n,h]\|_{H^1}\leq C \|h\|_{H^1}\prod_{i=1}^{n}\|b_i\|_{H^2} \label{FER1}
\end{align}
for all $b_1,\ldots, b_n\in H^2(\R)$ and $h\in H^1(\R).$ 

Moreover,  $B_{n,m}\in {\rm C}^{1-}(H^2(\R)^m,\kL^{n}_{\rm sym}(H^2(\R),\kL(H^1(\R)))).$ 
 \end{lemma}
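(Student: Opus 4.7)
The plan is to bound $\|B_{n,m}(a_1,\ldots,a_m)[b_1,\ldots,b_n,h]\|_{H^1}$ by separately controlling the $L_2$-norms of the function itself and of its weak derivative. The $L_2$-bound is immediate from Lemma~\ref{L:MP0'}: via the one-dimensional embedding $H^2(\R)\hookrightarrow W^1_\infty(\R)$ one has
\[
\|B_{n,m}(a_1,\ldots,a_m)[b_1,\ldots,b_n,h]\|_2 \leq C\|h\|_2 \prod_{i=1}^n \|b_i'\|_\infty \leq C\|h\|_{H^1}\prod_{i=1}^n \|b_i\|_{H^2},
\]
with $C$ depending only on $n,m$ and $\max_j \|a_j\|_{H^2}$. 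The substance of the proof lies in the weak derivative.

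Formally differentiating the integrand in \eqref{BNM} with respect to $\xi$---exploiting the three sources of $\xi$-dependence in $h(\xi-\eta)$, the numerator factors $\delta_{[\xi,\eta]}b_i$, and the denominator factors $\delta_{[\xi,\eta]}a_j$---I expect the identity
\begin{align*}
\partial_\xi B_{n,m}(a_1,\ldots,a_m)[b_1,\ldots,b_n,h]
&= B_{n,m}(a_1,\ldots,a_m)[b_1,\ldots,b_n,h'] \\
&\quad + \sum_{k=1}^n B_{n,m}(a_1,\ldots,a_m)[b_1,\ldots,b_k',\ldots,b_n,h]\\
&\quad -2\sum_{j=1}^m B_{n+2,m+1}(a_j,a_1,\ldots,a_m)[b_1,\ldots,b_n,a_j,a_j',h],
\end{align*}
which I would first verify for Schwartz-class data (where differentiation under the principal value is elementary) and then extend by density using the continuity properties of the $B_{n,m}$ operators from Section~\ref{Sec:10}.

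The hard part is estimating the terms in the last two sums, since the arguments $b_k'$ and $a_j'$ belong only to $H^1(\R)$ and not to $W^1_\infty(\R)$; hence the $L_\infty$-based Lemma~\ref{L:MP0'} cannot be applied directly. The remedy is to fix any $s\in(3/2,2)$ and invoke Lemma~\ref{L:MP0}~(i): using the symmetry of $B_{n,m}$ in its $b$-slots, I would place the rough argument in the first position and then apply the embeddings $H^2(\R)\hookrightarrow H^s(\R)$ and $H^1(\R)\hookrightarrow H^{s-1}(\R)$ to obtain, for each middle term,
\[
\|B_{n,m}(a)[b_k',b_1,\ldots,\widehat{b_k},\ldots,b_n,h]\|_2 \leq C\|b_k'\|_{H^1}\|h\|_{H^{s-1}} \prod_{i\neq k}\|b_i\|_{H^s} \leq C\|h\|_{H^1}\prod_{i=1}^n \|b_i\|_{H^2},
\]
and analogously for the $B_{n+2,m+1}$-terms (where an extra factor $\|a_j\|_{H^2}$ arises from the new $a_j$-slot but is absorbed into $C$). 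Combined with the $L_2$-bound, this yields \eqref{FER1}. Finally, the local Lipschitz dependence of $B_{n,m}$ on $(a_1,\ldots,a_m)$ in the norm of $\kL^n_{\rm sym}(H^2(\R),\kL(H^1(\R)))$ follows by telescoping over the $a$-slots and applying the difference identity \eqref{diffid}, which rewrites each one-slot difference $B_{n,m}(a)-B_{n,m}(\tilde a)$ as a single $B_{n+2,m+1}$-operator whose $H^1$-bound, by the argument just described, scales linearly with $\|a_k-\tilde a_k\|_{H^2}$.
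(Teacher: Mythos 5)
Your proof is correct and follows essentially the same strategy as the paper: the same derivative formula for $\varphi=B_{n,m}(a)[b,h]$, with the term carrying $h'$ estimated by Lemma~\ref{L:MP0'} (using $H^2(\R)\hookrightarrow W^1_\infty(\R)$) and the terms carrying $b_k'$ or $a_j'$---which live only in $H^1(\R)$---estimated by Lemma~\ref{L:MP0}~(i) after moving the rough factor into the distinguished first slot, and the Lipschitz statement obtained by telescoping over the $a$-slots via \eqref{diffid}. The only difference is cosmetic: the paper establishes the derivative formula directly via the difference quotients $D_\xi\varphi=(\tau_\xi\varphi-\varphi)/\xi$, rewritten exactly using the translation invariance \eqref{invar} and the identity \eqref{diffid} and then passed to the limit in $L_2$, whereas you propose to differentiate for Schwartz data and extend by density; both routes rely on the same $L_2$-continuity facts and the difference-quotient version merely sidesteps the approximation step.
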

\begin{proof}
We first show that $\varphi:=B_{n,m}(a_1,\ldots, a_{m})[b_1,\ldots, b_n,h]$ belongs to $H^1(\R)$. 
Recalling that the group $\{\tau_\xi\}_{\xi\in\R}\subset\kL(H^r(\R)),$ $r\geq0$,   has generator
$[f\mapsto f']\in\kL(H^{r+1}(\R),H^r(\R)),$
 it suffices to show that  ${D_\xi\varphi:=(\tau_\xi\varphi-\varphi)/\xi}$ converges in $L_2(\R)$ when letting $\xi\to0$. 
In view of~\eqref{diffid} we write
\begin{align*}
 D_\xi\varphi&=\sum_{i=1}^nB_{n,m}(\tau_\xi a_1,\ldots,\tau_\xi a_m)\big[b_1,\ldots,b_{i-1}, D_\xi b_i,\tau_\xi b_{i+1},\ldots,\tau_\xi b_n,\tau_\xi h\big]\\[1ex]
&\hspace{0,45cm}+ B_{n,m}(\tau_\xi a_1,\ldots,\tau_\xi a_m)\big[b_1,\ldots,, b_n, D_\xi h\big]\\[1ex]
&\hspace{0,45cm}-\sum_{i=1}^mB_{n+2,m+1}(\tau_\xi a_1,\ldots,\tau_\xi a_i,a_i,\ldots,a_m)\big[b_1,\ldots,b_n, D_\xi a_i,\tau_\xi a_i+a_i, h\big].
\end{align*}
 Lemma \ref{L:MP0'}  and Lemma \ref{L:MP0}~(i) enable us to pass to the limit $\xi\to0$ in $L_2(\R)$ in  this equality. 
 Hence, $\varphi\in H^1(\R)$ and
\begin{equation}\label{FDER}
\begin{aligned}
\varphi'&=B_{n,m}(  a_1,\ldots,  a_m) [b_1,\ldots , b_n, h' ]\\[1ex]
&\hspace{0,45cm}+\sum_{i=1}^nB_{n,m}(a_1,\ldots,a_m)[b_1,\ldots,b_{i-1}, b_i',b_{i+1},\ldots  b_n, h]\\[1ex]
&\hspace{0,45cm}-2\sum_{i=1}^mB_{n+2,m+1}( a_1,\ldots,  a_i, a_i,\ldots,a_m) [b_1,\ldots,b_n, a_i',a_i, h ].
\end{aligned}
\end{equation}
The estimate \eqref{FER1} is  a consequence of Lemma \ref{L:MP0'}  and Lemma \ref{L:MP0}~(i). 
The local Lipschitz continuity property follows from an  repeated application of \eqref{diffid} and \eqref{FER1}.
\end{proof}
 
As a consequence of Lemma~\ref{L:MP1} and \eqref{FDER} we obtain the following result.

\begin{cor}\label{C:1}
  $B_{n,m}\in {\rm C}^{1-}(H^3(\mathbb{R})^{m},\kL^{n}_{\rm sym}(H^3(\mathbb{R}),\kL( H^2(\mathbb{R})))) $ for all ${n,\, m\in\N}$.
\end{cor}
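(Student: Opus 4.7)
The plan is to bootstrap Lemma~\ref{L:MP1} by one derivative, using the differentiation formula~\eqref{FDER} established inside its proof. Let $a_1,\ldots,a_m,\,b_1,\ldots,b_n\in H^3(\R)$ and $h\in H^2(\R)$ be given, and set $\varphi:=B_{n,m}(a_1,\ldots,a_m)[b_1,\ldots,b_n,h]$. Via the embeddings $H^3(\R)\hookrightarrow H^2(\R)\hookrightarrow H^1(\R)$, Lemma~\ref{L:MP1} already yields $\varphi\in H^1(\R)$ together with the identity~\eqref{FDER} for $\varphi'$. The key point is that every summand on the right-hand side of~\eqref{FDER} is itself of the form $B_{N,M}(c_1,\ldots,c_M)[d_1,\ldots,d_N,g]$ in which the nonlinear arguments $c_j$ sit in $H^3\subset H^2$, every linear argument $d_j$ is either $b_i,\,a_i\in H^3\subset H^2$ or a derivative $b_i',\,a_i'\in H^2$, and the last argument satisfies $g\in\{h,h'\}\subset H^1$. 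A second application of Lemma~\ref{L:MP1} therefore bounds each such term in $H^1(\R)$; summing and using $\|b_i'\|_{H^2}\leq\|b_i\|_{H^3}$, $\|a_i'\|_{H^2}\leq\|a_i\|_{H^3}$, and $\|h'\|_{H^1}\leq\|h\|_{H^2}$ yields $\varphi'\in H^1(\R)$ together with
\begin{equation*}
\|\varphi\|_{H^2}\leq C\|h\|_{H^2}\prod_{i=1}^n\|b_i\|_{H^3},
\end{equation*}
where $C$ depends only on $n,\,m$ and $\max_i\|a_i\|_{H^3}$. Symmetry in $b_1,\ldots,b_n$ is immediate from the definition~\eqref{BNM}.

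For the local Lipschitz dependence on the nonlinear slots, I would apply the difference identity~\eqref{diffid} iteratively, writing $B_{n,m}(a_1,\ldots,a_m)-B_{n,m}(\tilde a_1,\ldots,\tilde a_m)$ as a finite sum of operators of type $B_{n+2,m+1}$ in which $a_i-\tilde a_i$ appears in one of the linear slots. The $H^2$-bound just established, applied to these higher-order operators with the shifted parameter count, then produces an estimate linear in $\|a_i-\tilde a_i\|_{H^3}$, delivering the desired local Lipschitz continuity of $B_{n,m}$ as a map into $\kL^n_{\rm sym}(H^3(\R),\kL(H^2(\R)))$.

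The only point requiring care, rather than a genuine obstacle, is to verify that after one differentiation every argument appearing in~\eqref{FDER} (and likewise in \eqref{diffid}) still has sufficient regularity to fit the hypotheses of Lemma~\ref{L:MP1}. This works because the passage from the target space $H^2$ to the hypotheses $H^2$ for the linear slots and $H^1$ for the last argument costs exactly the one derivative that is gained by starting from $H^3$ and $H^2$, respectively, so the bootstrap closes on the nose.
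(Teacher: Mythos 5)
Your proof is correct and follows exactly the paper's intended route: the paper states only that Corollary~\ref{C:1} follows from Lemma~\ref{L:MP1} and the differentiation formula~\eqref{FDER}, and your argument fills in precisely those details — applying Lemma~\ref{L:MP1} termwise to~\eqref{FDER} to bound $\varphi'$ in $H^1$, and using~\eqref{diffid} together with the resulting $H^2$-estimate to obtain local Lipschitz continuity.
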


\begin{thm}\label{T:isomH2}
The operator ${\lambda-\bD(f)\in\kL(H^2(\R)^2)}$ is an isomorphism for all~$f\in H^3(\R)$  and~$\lambda\in\R$ with~${|\lambda|>1/2}$.
\end{thm}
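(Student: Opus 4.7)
The plan is to bootstrap the regularity of solutions in two stages, $L_2(\R)^2\to H^1(\R)^2\to H^2(\R)^2$, using the shift invariance \eqref{invar} and the difference-quotient technique that drove the proof of Theorem~\ref{T:isom2}. Boundedness of $\lambda-\bD(f)$ on $H^2(\R)^2$ follows immediately from Corollary~\ref{C:1} applied to the block representation \eqref{DFB}, and injectivity follows from $H^2\hookrightarrow L_2$ combined with Theorem~\ref{L2spec}. Only surjectivity requires work.

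The intermediate step is to show that $\lambda-\bD(f)\in\kL(H^1(\R)^2)$ is an isomorphism. Given $\gamma\in H^1(\R)^2$, Theorem~\ref{L2spec} (applicable since $H^3\hookrightarrow {\rm C}^1$) supplies a unique $\beta\in L_2(\R)^2$ with $(\lambda-\bD(f))[\beta]=\gamma$, and the shift invariance \eqref{invar} yields, for every $\xi\neq0$,
\[(\lambda-\bD(\tau_\xi f))[D_\xi\beta]=D_\xi\gamma+\Psi_\xi[\beta],\qquad D_\xi u:=(\tau_\xi u-u)/\xi,\]
with $\Psi_\xi[\beta]:=(\bD(\tau_\xi f)-\bD(f))[\beta]/\xi$. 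Expanding $\Psi_\xi$ via the identity \eqref{diffid} and the multilinearity of $B_{n,m}$ in its $b$-arguments writes it as a finite sum of $B_{n,m}$-type operators whose $a$- and $b$-arguments all lie in $\{\tau_\xi f,\,f,\,\tau_\xi f+f,\,D_\xi f\}$ and are uniformly bounded in $W^1_\infty$ via the embedding $H^3\hookrightarrow W^2_\infty$. Lemma~\ref{L:MP0'} then gives $\|\Psi_\xi[\beta]\|_2\leq C\|\beta\|_2$ uniformly in $\xi$, while Hadamard's inequality gives $\|D_\xi\gamma\|_2\leq\|\gamma'\|_2$. Theorem~\ref{L2spec} applied to $\tau_\xi f$ (with $\|(\tau_\xi f)'\|_\infty=\|f'\|_\infty$) then produces a uniform bound $\|D_\xi\beta\|_2\leq C(\|\gamma\|_{H^1}+\|\beta\|_2)$, so $\beta\in H^1$ and $\|\beta\|_{H^1}\leq C\|\gamma\|_{H^1}$, establishing the $H^1$-isomorphism.

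The passage from $H^1$ to $H^2$ proceeds in exact parallel, with Lemma~\ref{L:MP1} replacing Lemma~\ref{L:MP0'} and the just-established $H^1$-isomorphism replacing Theorem~\ref{L2spec}. For $\gamma\in H^2(\R)^2$ and the corresponding $\beta\in H^1(\R)^2$, one has $\|D_\xi\gamma\|_{H^1}\leq\|\gamma\|_{H^2}$, and since $D_\xi f$ is uniformly bounded in $H^2$ for $f\in H^3$ (from $|e^{i\eta\xi}-1|\leq|\eta\xi|$ on the Fourier side), Lemma~\ref{L:MP1} yields $\|\Psi_\xi[\beta]\|_{H^1}\leq C\|\beta\|_{H^1}$ uniformly in $\xi$. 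The $H^1$-isomorphism applied to $\tau_\xi f$ then delivers $\|D_\xi\beta\|_{H^1}\leq C\|\gamma\|_{H^2}$, whence $\beta\in H^2$ with $\|\beta\|_{H^2}\leq C\|\gamma\|_{H^2}$. The main technical obstacle will be the careful bookkeeping in the expansion of $\Psi_\xi[\beta]$ into elementary $B_{n,m}$ operators via \eqref{diffid} and multilinearity, paying particular attention to the $f'\beta_i$-type inputs appearing in the off-diagonal entries of \eqref{DFB}; once organized, all uniform-in-$\xi$ bounds reduce to routine applications of Lemmas~\ref{L:MP0'} and~\ref{L:MP1}.
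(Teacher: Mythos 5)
Your proposal is correct and gives a genuine alternative to the paper's argument. The paper does not bootstrap through $H^1$: it instead applies the identity \eqref{FDER} to derive the exact commutator relation \eqref{Diff}, $(\bD(f)[\beta])''-\bD(f)[\beta'']=T_{\rm lot}[\beta]$ with $\|T_{\rm lot}[\beta]\|_2\leq C\|\beta\|_{H^1}$, from which it extracts the a priori estimate $\|\beta\|_{H^2}\leq C\|(\mu-\bD(f))[\beta]\|_{H^2}$ uniformly for $|\mu|\geq 1/2+\delta$, and then closes via the method of continuity exactly as in Theorem~\ref{T:isom2}. Your two-stage difference-quotient bootstrap is closer in spirit to the paper's own proof of Theorem~\ref{T:isom2} and establishes surjectivity and the inverse bound directly rather than inferring them from an a priori estimate plus a continuity/openness argument; the price is that you must climb through the intermediate space $H^1$ because each pass of the shift-invariance argument gains only one derivative. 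The paper's route is shorter because \eqref{FDER} already encodes the full gain of two derivatives in a single commutator, and that relation is established anyway while proving Lemma~\ref{L:MP1}. One point you should make explicit rather than relegate to ``routine bookkeeping'': the expansion of $\Psi_\xi[\beta]$ does not consist solely of $B_{n,m}$-type terms with $a$- and $b$-arguments in $\{\tau_\xi f,f,\tau_\xi f+f,D_\xi f\}$. There are also terms in which the \emph{linear} argument changes, most notably $B^0_{n,2}(f)\bigl[(D_\xi f')\beta_i\bigr]$ coming from $B^0_{n,2}(\tau_\xi f)[(\tau_\xi f')\beta_i]-B^0_{n,2}(f)[f'\beta_i]$, and the required uniform bound $\|(D_\xi f')\beta_i\|_{H^1}\leq C\|\beta_i\|_{H^1}$ is not a consequence of Lemma~\ref{L:MP0'} or Lemma~\ref{L:MP1} but of an elementary product estimate using $\|D_\xi f'\|_\infty\leq\|f''\|_\infty$, $\|(D_\xi f')'\|_2\leq\|f'''\|_2$, and $H^1(\R)\hookrightarrow L_\infty(\R)$; it does hold for $f\in H^3(\R)$, so the argument goes through, but it needs stating.
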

\begin{proof}
Fix \mbox{$f\in H^3(\R)$}.
From \eqref{DFB} and Corollary \ref{C:1} we get  \mbox{$\bD(f)\in\kL(H^2(\R)^2)$.}
Recalling~\eqref{FDER}, we further have
\begin{align}\label{Diff}
(\bD(f)[\beta])''-\bD(f)[\beta'']=T_{\rm lot}[\beta], \qquad \beta\in H^2(\R)^2,
\end{align}
 where each component of $T_{\rm lot}[\beta]$ is a linear combination of terms
\begin{align*}
&B_{n,m}(f,\ldots,f)[f'',f,\ldots ,f, (f')^k\beta_i],&&B_{n,m}(f,\ldots,f)[f',f',f,\ldots ,f, (f')^k\beta_i],\\[1ex]
& B_{n,m}(f,\ldots,f)[f',f,\ldots ,f, ((f')^k\beta_i)'],&&  B_{n,m}(f,\ldots,f)[f,\ldots ,f, f'''\beta_i],
\end{align*}
where $n,\,m\in\N$ satisfy  $0\leq n,\,m\leq 7$  and $k\in\{0,\, 1\}$.
 From Lemma \ref{L:MP0'} and  Lemma~\ref{L:MP0}~(i) (with~$s=7/4$) we conclude that 
\begin{equation}\label{Tlotest}
\|T_{\rm lot}[\beta]\|_2\leq C\|\beta\|_{ H^{1}}, \qquad \beta\in H^2(\R)^2.
\end{equation}
Given $\lambda\in\R$ with $|\lambda|>1/2$,   we pick  $\delta\in(0,1)$   with $|\lambda|\geq 1/2+\delta$ and $\|f'\|_{\infty}\leq 1/\delta.$
Since by  Theorem~ \ref{L2spec} we have $\|(\mu-\bD(f))^{-1}\|_{\kL(L_2(\R)^2)}\leq C$ for all $\mu\in\R$ with $|\mu|\geq 1/2+\delta$,  we deduce from \eqref{Diff}, \eqref{Tlotest}, and \eqref{IP} that
\begin{align*}
\|\beta\|_{H^2}&\leq C(\|\beta''\|_2+\|\beta\|_2)\leq C(\|(\mu-\bD(f))[\beta'']\|_2+\|\beta\|_2)\\[1ex]
&\leq C\big(\|(\mu-\bD(f))[\beta]''\|_2+\|T_{\rm lot}[\beta]\|_2+\|\beta\|_2\big)\leq C\big(\|(\mu-\bD(f))[\beta]''\|_2+ \|\beta\|_{H^{1}}\big)\\[1ex]
&\leq\tfrac{1}{2}\|\beta\|_{H^2}+C\big(\|(\mu-\bD(f))[\beta]''\|_2+\|\beta\|_2\big)\\[1ex]
&\leq\tfrac{1}{2}\|\beta\|_{H^2}+C\big(\|(\mu-\bD(f))[\beta]''\|_2+\|(\mu-\bD(f))[\beta]\|_2\big),
\end{align*}
hence
\[\|\beta\|_{H^2}\leq C\|(\mu-\bD(f))[\beta]\|_{H^2}\]
for all $\beta\in H^2(\R)^2$ and $\mu\in\R$ with $|\mu|\geq 1/2+\delta$.
The result follows now by the same continuity argument as in the proof of Theorem \ref{T:isom2}.
\end{proof}

 %%%%%%%%%%%%%%%%%%%%%%%%%%%%%%%%%%%%%%%%%%%%%%%%%%
%%%%%%%%%%%%%%%%%%%%%%%%%%%%%%%%%%%%%%%%%%%%%%%%%%
%%%%%%%%%%%%%%%%%%%%%%%%%%%%%%%%%%%%%%%%%%%%%%%%%%
%%%%%%%%%%%%%%%%%%%%%%%%%%%%%%%%%%%%%%%%%%%%%%%%%%
\section{The contour integral formulation}\label {Sec:13}
%%%%%%%%%%%%%%%%%%%%%%%%%%%%%%%%%%%%%%%%%%%%%%%%%%
%%%%%%%%%%%%%%%%%%%%%%%%%%%%%%%%%%%%%%%%%%%%%%%%%%
%%%%%%%%%%%%%%%%%%%%%%%%%%%%%%%%%%%%%%%%%%%%%%%%%%
%%%%%%%%%%%%%%%%%%%%%%%%%%%%%%%%%%%%%%%%%%%%%%%%%%
 In this section we formulate the Stokes evolution problem \eqref{STOKES} 
as an nonlinear  evolution problem having only~$f$ as unknown, cf.~\eqref{NNEP}.

Based on the results established in Section~\ref{sec:aux}, Section~\ref{Sec:11}, and Appendix~\ref{neargamma}  we start by proving that for each $f\in H^3(\R)$, the boundary value problem
\begin{equation}\label{probintFT}
\left.
\begin{array}{rclll}
\mu^\pm\Delta v^\pm-\nabla p^\pm&=&0&\mbox{in $\Omega^\pm$,}\\
\vdiv v^\pm&=&0&\mbox{in $\Omega^\pm$,}\\
v^+&=&v^-&\mbox{on $\Gamma$,}\\{}
[T_\mu(v, p)]\tilde\nu&=&-\sigma\tilde\kappa\tilde\nu&\mbox{on $\Gamma$,}\\
(v^\pm, p^\pm)(x)&\to&0&\mbox{for $|x|\to\infty$}
\end{array}\right\}
\end{equation}
has a unique solution $(v,p)\in X_f$ with the property that $ v^\pm|_\G \circ\Xi_f\in H^2(\R)^2$.
 This is established in Proposition~\ref{P:STOsol} below, where we also provide an implicit  formula for~$v^\pm|_\G$ in terms of contour integrals on~$\Gamma$.
 This representation allows to recast the kinematic boundary condition   \eqref{probint}$_6$  in the form~\eqref{NNEP}.

 With the substitution $\tilde v^\pm:=\mu_\pm v^\pm$, Problem  \eqref{probintFT} is equivalent to 
\begin{equation}\label{TSP}
\left.
\begin{array}{rclll}
 \Delta \tilde v^\pm-\nabla p^\pm&=&0&\mbox{in $\Omega^\pm$,}\\
\vdiv \tilde v^\pm&=&0&\mbox{in $\Omega^\pm$,}\\
\mu_- \tilde v^+-\mu_+ \tilde v^-&=&0&\mbox{on $\Gamma$,}\\{}
[T_1(\tilde v, p)]\tilde\nu&=&-\sigma\tilde\kappa\tilde\nu&\mbox{on $\Gamma$,}\\
( \tilde v^\pm, p^\pm)&\to&0&\mbox{for $|x|\to\infty$}.
\end{array}\right\}
\end{equation}
We construct the solution to \eqref{TSP} by splitting
\[(\tilde v,p)=(w_s,q_s)+(w_d,q_d)\]
where $(w_s,q_s),\,(w_d,q_d)\in X_f$ satisfy
\begin{equation}\label{TSPsigma}
\left.
\begin{array}{rclll}
 \Delta w_{s}^\pm-\nabla q_{s}^\pm&=&0&\mbox{in $\Omega^\pm$,}\\
\vdiv w_{s}^\pm&=&0&\mbox{in $\Omega^\pm$,}\\
 w_{s}^+-w_{s}^-&=& 0&\mbox{on $\Gamma$,}\\{}
[T_1(w_{s},q_{s})]\tilde\nu&=&-\sigma\tilde\kappa\tilde\nu&\mbox{on $\Gamma$,}\\
(w_{s}^\pm,q_{s}^\pm)&\to&0&\mbox{for $|x|\to\infty$}
\end{array}\right\}
\end{equation}
and
\begin{equation}\label{TSPbeta}
\left.
\begin{array}{rclll}
 \Delta w_{d}^\pm-\nabla q_{d}^\pm&=&0&\mbox{in $\Omega^\pm$,}\\
\vdiv w_{d}^\pm&=&0&\mbox{in $\Omega^\pm$,}\\
\mu_-w_d^+-\mu_+w_d^-&=&(\mu_+-\mu_-)w_s&\mbox{on $\Gamma$,}\\
{}[T_1(w_{d},q_{d})]\tilde\nu&=&0&\mbox{on $\Gamma$,}\\
(w_{d}^\pm,q_{d}^\pm)&\to&0&\mbox{for $|x|\to\infty$.}
\end{array}\right\}
\end{equation}
The system \eqref{TSPsigma} has been  studied in \cite{MP2021}.
According to \cite[Theorem 2.1 and  Remark~A.2]{MP2021},  there exists   precisely one  solution~${(w_{s} ,q_{s}):=(w_{s}(f),q_{s}(f))\in X_f}$ to \eqref{TSPsigma}. 
It satisfies
\[
w_{s} \in {\rm C}^\infty(\R^2\setminus\Gamma)\cap {\rm C}^1(\R^2)\quad\text{and}\quad q_{s}^\pm\in {\rm C}^\infty(\Omega^\pm)\cap {\rm C}(\overline{\Omega^\pm}).
\] 
Moreover, recalling  \eqref{defB0} and  \cite[Eqns. (2.2), (2.3),  (A.2)]{MP2021}, the trace $w_{s}(f)|_\Gamma$  can be expressed  via
\begin{equation}\label{defWf}
w_s(f)|_\Gamma\circ\Xi=:G(f):=(G_1(f),G_2(f)),
\end{equation} 
with 
\begin{equation}\label{vonGamma}
\begin{aligned}
 4\pi\sigma^{-1} G_1(f)&:= (B_{0,2}^0(f)-B_{2,2}^0(f))[\phi_1(f)+f'\phi_2(f)]  \\[1ex]
 &\hspace{0.55cm}+ B_{1,2}^0(f)[3f'\phi_1(f)-\phi_2(f)] +B_{3,2}^0(f)[f'\phi_1(f)+\phi_2(f)] ,\\[1ex]
 4\pi\sigma^{-1} G_2(f)&:= (B_{1,2}^0(f)-B_{3,2}^0(f))[\phi_1(f)+f'\phi_2(f)] \\[1ex]
   &\hspace{0.55cm} -B_{0,2}^0(f)[f'\phi_1(f)+\phi_2(f)]+B_{2,2}^0(f)[f'\phi_1(f)-3\phi_2(f)] ,
\end{aligned}
\end{equation}
where $ \phi_i(f)\in H^2(\R),$ $i\in\{1,\,2\}$, are given by
\begin{align}\label{Phii}
 \phi_1(f):=\frac{{f'}^2}{\omega+\omega^2}\qquad\text{and}\qquad \phi_2(f):=\frac{f'}{\omega}.
 \end{align}
We point out that  Corollary \ref{C:1} yields $G_i(f)\in H^2(\R),$ $i\in\{1,\,2\}$.

It remains to show that the boundary value problem~\eqref{TSPbeta} has a unique solution~${(w_d,q_d)\in X_f}$ with $w_d^\pm|_\G\circ\Xi\in H^2(\R)^2.$
To prove the existence, we solve in $X_f$, for  given $\beta\in H^2(\R)^2,$  the auxiliary problem  \eqref{bvpaux} and denote its solution by~$(w,q)=(w,q)[\beta]$.
In view of Lemma~\ref{l:neargamma}~(i)  we have
\begin{align*} 
(\mu_-w^+-\mu_+w^-)|_\G\circ\Xi=(\mu_++\mu_-)\Big(\frac{1}{2}+a_\mu\bD(f)\Big)[\beta].
\end{align*} 
Therefore  $(w_d,q_d):=(w,q)[\beta]$ solves~\eqref{TSPbeta} if and only if
\begin{align}\label{TOSO}
\Big(\frac{1}{2}+a_\mu\bD(f)\Big)[\beta]=a_\mu G(f),
\end{align} 
 where 
\[a_\mu:=\frac{\mu_+-\mu_-}{\mu_++\mu_-}\in(-1,1).\]
Theorem~\ref{T:isomH2} implies that  equation~\eqref{TOSO} has a unique solution $\beta=:\beta(f)\in H^2(\R)^2$.
This establishes not only the existence but also the uniqueness of the solution to~\eqref{TSPbeta}. \pagebreak

 Summarizing, we  have shown the following result:
\begin{prop}\label{P:STOsol}
 Given $f\in H^3(\R)$, the  boundary value problem \eqref{probintFT} has a  unique solution~${(v,p)\in X_f}$ such that $ v^\pm|_\G \circ\Xi\in H^2(\R)^2.$
Moreover, 
\begin{align*}
v^\pm|_\G\circ\Xi=\frac{G(f)}{\mu_\pm}+\frac{1}{\mu_\pm}\left(-\bD(f)\pm \frac{1}{2}\right)[\beta(f)],
\end{align*}
where $G(f)\in H^2(\R)^2$ is defined in \eqref{defWf}-\eqref{vonGamma} and $\beta(f)\in H^2(\R)^2$ is the unique solution to~\eqref{TOSO}.
\end{prop}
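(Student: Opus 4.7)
The proof essentially collects the pieces assembled in the discussion preceding the statement; the task is to organize and finalize these into a complete argument. The plan is to carry out three steps.

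\emph{Step 1 (existence via decomposition).} I would begin with the substitution $\tilde v^\pm := \mu_\pm v^\pm$, reducing \eqref{probintFT} to \eqref{TSP}, and then use the splitting $(\tilde v, p) = (w_s, q_s) + (w_d, q_d)$, so that the task separates into producing $X_f$-solutions to \eqref{TSPsigma} and \eqref{TSPbeta}. For \eqref{TSPsigma} I would invoke \cite[Theorem 2.1 and Remark A.2]{MP2021} to obtain $(w_s, q_s) \in X_f$ with the stated smoothness, and the representation \eqref{defWf}--\eqref{vonGamma} combined with Corollary \ref{C:1} and Lemma \ref{L:MP1} (applied componentwise, noting that $\phi_i(f)\in H^2(\R)$ since $f\in H^3(\R)$) yields $G(f)\in H^2(\R)^2$.

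\emph{Step 2 (double-layer ansatz and reduction to \eqref{TOSO}).} For \eqref{TSPbeta} I would use the double-layer ansatz $(w_d, q_d) = (w,q)[\beta]$ from \eqref{defw1}--\eqref{defq1} with unknown density $\beta\in H^2(\R)^2$. By Proposition \ref{auxpropbeta}, this automatically satisfies \eqref{TSPbeta}$_1,{}_2,{}_4,{}_5$ and yields $w_d^\pm|_\G\circ\Xi\in H^2(\R)^2$. The jump identity of the text,
\[
(\mu_-w^+-\mu_+w^-)|_\G\circ\Xi=(\mu_++\mu_-)\Bigl(\tfrac{1}{2}+a_\mu\bD(f)\Bigr)[\beta],
\]
rewrites the transmission condition \eqref{TSPbeta}$_3$ (using $w_s^\pm|_\G\circ\Xi = G(f)$ by continuity of $w_s$) as exactly \eqref{TOSO}. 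Since $a_\mu\in(-1,1)$, the operator $\tfrac{1}{2}+a_\mu\bD(f)=-a_\mu(\lambda-\bD(f))$ with $\lambda:=-1/(2a_\mu)$ satisfies $|\lambda|>1/2$, so Theorem~\ref{T:isomH2} provides a unique $\beta=\beta(f)\in H^2(\R)^2$ solving \eqref{TOSO}. Combining $w_s^\pm|_\G\circ\Xi=G(f)$ with the jump relation $w_d^\pm|_\G\circ\Xi=(-\bD(f)\pm\tfrac{1}{2})[\beta(f)]$ (extracted from the same appendix result underlying the identity just quoted) and dividing by $\mu_\pm$ yields the trace formula in the statement.

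\emph{Step 3 (uniqueness).} I would argue as in the proof of \cite[Theorem 2.1]{MP2021}: suppose $(v,p)\in X_f$ solves \eqref{probintFT} with zero data and $v^\pm|_\G\circ\Xi\in H^2(\R)^2$. The continuity $[v]=0$ together with the stress condition $[T_\mu(v,p)]\tilde\nu=0$ allows one to write the Rellich-type energy identity as a single integral over $\R^2$ whose boundary term vanishes; combined with the decay at infinity (justified by the $H^2$-trace assumption and standard elliptic theory for Stokes), this forces the symmetric gradients of $v^\pm$ to vanish, so $v^\pm$ is a rigid motion in each phase, which together with decay forces $v\equiv 0$ and, from the momentum equation, $p\equiv 0$.

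The main technical obstacle is not in this proof itself but in the inputs it uses: the $H^2$-isomorphism Theorem~\ref{T:isomH2} (which is what lets $\beta(f)$ live in $H^2(\R)^2$ rather than merely $L_2(\R)^2$) and the jump relations in Appendix~\ref{neargamma} at the $H^2$-trace level. Once both are in hand, the argument is an assembly; the only care needed in writing it is checking that $a_\mu\in(-1,1)$ translates to $|\lambda|>1/2$ in the form required by Theorem~\ref{T:isomH2}, and that the regularities from Step 1 and Step 2 combine to place $(v,p)$ in $X_f$.
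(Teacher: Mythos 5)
Your proposal is correct and follows essentially the same route as the paper: the substitution $\tilde v^\pm=\mu_\pm v^\pm$, the splitting $(\tilde v,p)=(w_s,q_s)+(w_d,q_d)$ into \eqref{TSPsigma} and \eqref{TSPbeta}, the double-layer ansatz reducing \eqref{TSPbeta} to \eqref{TOSO}, and the use of Theorem~\ref{T:isomH2} (noting $|\lambda|=1/(2|a_\mu|)>1/2$ since $\mu_+\neq\mu_-$) to get $\beta(f)\in H^2(\R)^2$. The one place you are slightly more explicit than the paper is uniqueness: the paper disposes of it by combining the cited uniqueness for \eqref{TSPsigma} with the remark that unique solvability of \eqref{TOSO} yields uniqueness for \eqref{TSPbeta}, whereas you run the underlying energy/Rellich argument directly on the homogeneous version of \eqref{probintFT}; both ultimately rest on the same mechanism from \cite[Theorem~2.1]{MP2021}, so this is a presentational rather than a substantive difference.
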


 From this result and \eqref{STOKES} we infer, under the assumption that $\G(t)$ is at each time instant~${t\geq0}$ the graph of a 
 function $f(t)\in H^3(\R)$ and that $(v(t),p(t))$ belongs to~$X_{f(t)}$  and satisfies~$v(t)^\pm|_{\Gamma(t)}\circ\Xi(f(t))\in H^2(\R)^2,$ that \eqref{probint} can be recast as
\be\label{evol0}
\p_tf=\frac{1}{\mu_+}\Big\langle G(f)-\bD(f)[\beta(f)]+ \frac{1}{2}\beta(f)\,\Big|\,(-f',1)^\top\Big\rangle
=\frac{1}{\mu_+-\mu_-}\big\langle\beta(f)\,|\,(-f',1)^\top\big\rangle.
\ee
Here $\langle \cdot\,|\,\cdot\rangle$ denotes the scalar product on $\R^2$.

 Using the results in Section~\ref{Sec:11} and \cite{MP2021} we can formulate the latter equation as an evolution equation in $H^{s-1}(\R)^2$, where $s\in(3/2,2)$ is fixed in the remaining.
To this end we first infer from \cite[Corollary C.5]{MP2021} that, given~$n,\, m\in\N$, we have
\begin{align}\label{Bsmooth}
[f\mapsto B^0_{n,m}(f)]\in {\rm C}^\infty(H^s(\R), \kL( H^{s-1}(\mathbb{R}))).
\end{align}
Further, \cite[Lemma 3.5]{MP2021} ensures for the mappings defined in \eqref{Phii} that  
\begin{align}\label{regphii}
[f\mapsto\phi_i(f)] \in {\rm C}^\infty(H^s(\mathbb{R}), H^{s-1}(\mathbb{R})),\qquad i=1,\, 2.
\end{align}
Additionally, for any~${f_0\in H^s(\mathbb{R}),}$ the Fr\'echet derivative $\p\phi_i(f_0)$  is given by 
$$\p\phi_i(f_0)=a_i(f_0)\frac{d}{dx},\qquad i=1,\, 2,$$
with
\be\label{defa12}
a_1(f_0):=\frac{f_0'(2+f_0'^2+2\sqrt{1+f_0'^2})}{\sqrt{1+f_0'^2}(\sqrt{1+f_0'^2}+1+f_0'^2)^2}\qquad\text{and}\qquad a_2(f_0):=\frac{1}{(1+f_0'^2)^{3/2}}.
\ee
  It is easy to check, by arguing as in 
  \cite[Lemma~C.1]{MP2021},
   that $\phi_i$, $i=1,\, 2$, maps bounded sets in~$H^s(\R)$ to bounded sets in~$H^{s-1}(\R)$.
 This observation, the relations  \eqref{vonGamma}, \eqref{Bsmooth}, \eqref{regphii},   and Lemma \ref{L:MP0} combined enable us to conclude  that  the map
   defined in \eqref{defWf}-\eqref{vonGamma}   satisfies
\begin{align}\label{regWf}
[f\mapsto G(f)]\in {\rm C}^\infty(H^{s}(\R),H^{s-1}(\R)^2),
\end{align}
 and also that  $G$   maps bounded sets in~$H^s(\R)$ to bounded sets in~$H^{s-1}(\R)^2$.

Moreover, recalling \eqref{DFB}, we infer from \eqref{Bsmooth} that
\begin{align}\label{regbD}
\bD\in {\rm C}^\infty(H^s(\R),\kL(H^{s-1}(\R)^2)).
\end{align}
In view of~\eqref{regWf} and of Theorem~\ref{T:isom2} we can solve, for given $f\in H^s(\R)$, the equation~\eqref{TOSO} in $H^{s-1}(\R)^2$. 
Its  unique solution is given by
\begin{equation}\label{betaf}
\beta(f):=2a_\mu(1+2a_{\mu}\bD(f))^{-1}[G(f)]\in H^{s-1}(\R)^2,
\end{equation}
and, since the mapping which associates to an isomorphism its inverse is smooth, we obtain from Theorem~\ref{T:isom2}, \eqref{regWf}, and \eqref{regbD}  that 
 \begin{align}\label{regBeta}
 \big[f\mapsto\beta(f)]\big]\in {\rm C}^\infty(H^{s}(\R),H^{s-1}(\R)^2).
 \end{align}
  Furthermore,  \eqref{betaf} and the estimate \eqref{Cdeltas} imply that  $\beta$  inherits from $G$ the property to map bounded sets in~$H^s(\R)$ to bounded sets in~$H^{s-1}(\R)^2$.
Summarizing, in a compact form, the Stokes flow  problem \eqref{STOKES} can be recast as the evolution problem
\begin{align}\label{NNEP}
\frac{df}{dt}=\Phi(f(t)),\quad t\geq0,\qquad f(0)=f_0,
\end{align}
where $\Phi:H^{s}(\R)\to H^{s-1}(\R)$ is defined, cf. \eqref{evol0},  by
\begin{align}\label{PHI}
\Phi(f):=\frac{1}{\mu_+-\mu_-}\langle\beta(f)|(-f',1)^\top\rangle.
\end{align}
Observe that, due to \eqref{regBeta}, 
\begin{align}\label{REGphi}
\Phi\in {\rm C}^\infty(H^s(\mathbb{R}), H^{s-1}(\mathbb{R})),
\end{align}
 and that $\Phi$  maps bounded sets in~$H^s(\R)$ to bounded sets in~$H^{s-1}(\R).$

\section{\label{Sec:linfinal} Linearization, localization, and proof of the main result}

 We are going to prove that the nonlinear and nonlocal problem \eqref{NNEP} is parabolic in~$H^s(\R)$ 
 in the sense that the Fr\'echet derivative $\p\Phi(f_0)$, 
generates an analytic semigroup in $\kL(H^{s-1}(\R))$ for each~${f_0\in H^s(\R)}$.
  This property then enables us to use the abstract  existence results from \cite{L95} in the proof of our  main result Theorem~\ref{MT1}.

 \begin{thm}\label{T:GP}
 For any $f_0\in H^s(\mathbb{R})$, the Fr\'echet derivative $\p\Phi(f_0)$,  considered as an unbounded operator in $H^{s-1}(\R)$ with dense domain $H^{s}(\R)$,  
 generates an analytic semigroup in $\kL(H^{s-1}(\R))$.
\end{thm}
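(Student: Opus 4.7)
My plan is the standard ``linearize, freeze coefficients, compare to Fourier multipliers'' paradigm. First I would compute $\partial\Phi(f_0)$ explicitly. Differentiating the defining relation $(1+2a_\mu \bD(f))[\beta(f)] = 2a_\mu G(f)$ in direction $h\in H^s(\R)$ and inverting $1+2a_\mu\bD(f_0)$ by Theorem~\ref{T:isom2} yields
$$ \partial\beta(f_0)[h] = 2a_\mu\,(1+2a_\mu\bD(f_0))^{-1}\bigl(\partial G(f_0)[h] - (\partial\bD(f_0)[h])[\beta(f_0)]\bigr), $$
and then from \eqref{PHI},
$$ (\mu_+-\mu_-)\,\partial\Phi(f_0)[h] = \langle \partial\beta(f_0)[h]\,|\,(-f_0',1)^\top\rangle - \beta_1(f_0)\,h'. $$
Smoothness of $G$ and $\bD$ into the appropriate operator spaces (see \eqref{regWf}, \eqref{regbD}), together with Theorem~\ref{T:isom2} and the chain rule, show that $\partial\Phi(f_0)\in\kL(H^s(\R),H^{s-1}(\R))$, so that $\partial\Phi(f_0)$, with domain $H^s(\R)$, is densely defined and closed when viewed as an unbounded operator in $H^{s-1}(\R)$.

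The core of the argument is then to localize $\partial\Phi(f_0)$ and extract its principal symbol. Given $\varepsilon>0$, I would choose a finite partition of unity $\{\pi_j^2\}_{j=0}^N$ on $\R$ subordinate to intervals on which $f_0'$ oscillates by at most $\varepsilon$, together with reference points $x_j$ in each interval (with $\pi_0$ supported outside a large ball, where $f_0'$ is small). Using the shift-invariance \eqref{invar}, the difference identity \eqref{diffid}, and the commutator estimate Lemma~\ref{L:MP0}(iii), one shows that for each $j$ the operator $\pi_j\,\partial\Phi(f_0)[\pi_j\,\cdot\,]$ differs from a translation-invariant ``frozen'' operator $A_{j,f_0}$ by a remainder whose $\kL(H^s(\R),H^{s-1}(\R))$-norm is $O(\varepsilon)$, up to a lower-order (genuinely $H^{s'-1}$-valued, $s'<s$) term. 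Each $A_{j,f_0}$ is a Fourier multiplier obtained by replacing $f_0$ in the $B_{n,2}^0$-pieces of $G(f)$ and $\bD(f)$ by its affinization $\xi\mapsto f_0'(x_j)\,\xi$; in the affine case, the Stokes kernels collapse by explicit residue calculations to a combination of the identity and multipliers with symbol $|\xi|$. Combining these with the structure of \eqref{betaf} and \eqref{PHI}, the resulting symbol of $A_{j,f_0}$ takes the form
$$ \mathfrak{a}_j(\xi) \;=\; -\,\frac{\sigma}{2(\mu_++\mu_-)}\,\bigl(1+f_0'(x_j)^2\bigr)^{-3/2}\,|\xi|, $$
which is strictly negative, first-order and elliptic. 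Each $A_{j,f_0}$ therefore generates an analytic semigroup on $H^{s-1}(\R)$ with domain $H^s(\R)$, with sectorial bounds that are uniform in $j$.

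The final step is to invoke an abstract localization result (in the spirit of \cite{L95} and as already used in the equal-viscosity case \cite{MP2021}) which patches the $\{A_{j,f_0}\}_{j=0}^N$ together: smallness of the localization error plus uniform sectoriality of the frozen operators yields sectoriality of $\partial\Phi(f_0)$ on $H^{s-1}(\R)$ with domain $H^s(\R)$, which is what we need. The main obstacle I expect is step two, namely identifying exactly which terms of $\partial\Phi(f_0)$ contribute the first-order symbol. The operator involves the derivatives of $\bD$, $G$ and of the inverse $(1+2a_\mu\bD(f_0))^{-1}$, and $\beta(f_0)$ only lies in $H^{s-1}$, so the contributions $(\partial\bD(f_0)[h])[\beta(f_0)]$ and $\beta_1(f_0)\,h'$ have to be analyzed carefully. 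Lemma~\ref{L:MP0}(iii) is precisely the tool that lets one pull the ``test derivative'' of $h$ outside the singular integrals modulo a smoothing remainder, thereby reducing the principal symbol computation to the explicit residue calculation on the affine profiles $\xi\mapsto f_0'(x_j)\,\xi$.
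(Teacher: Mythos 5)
Your overall strategy---differentiate the defining relation for $\beta(f)$, localize via a partition of unity, freeze the coefficients to obtain Fourier multipliers, and appeal to Lunardi-type localization to deduce sectoriality---is the same strategy the paper uses. However, your principal symbol is wrong in two respects, and the second of these would make the localization estimate fail as stated.

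First, the coefficient in front of $|\xi|$: carrying out the freezing of coefficients in $\partial G(f_0)$ and $\partial\bD(f_0)[\cdot][\beta_0]$ produces, after the explicit algebra, the combination $a_2(f_0)+f_0'a_1(f_0)$ rather than $a_2(f_0)$ alone. One checks with $\omega=(1+f_0'^2)^{1/2}$ that
\[
a_2(f_0)+f_0'a_1(f_0)=\frac{1}{\omega^3}+\frac{f_0'^2(2+f_0'^2+2\omega)}{\omega^3(1+\omega)^2}
=\frac{(1+\omega)^2+f_0'^2(2+f_0'^2+2\omega)}{\omega^3(1+\omega)^2}=\frac{1}{\omega},
\]
so the correct coefficient is $\frac{\sigma}{2(\mu_++\mu_-)}(1+f_0'(\xi_j)^2)^{-1/2}$, not $(1+f_0'(\xi_j)^2)^{-3/2}$. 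The $-3/2$ power would arise if one dropped the contribution from $\phi_1$ (equivalently from $a_1$), which is the piece that makes the two-phase case genuinely different from the equal-viscosity one.

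Second, and more importantly, the term $-\frac{1}{\mu_+-\mu_-}\beta_0^1 h'$ that you correctly identify in $\partial\Phi(f_0)$ is a \emph{first-order} operator, and after freezing it contributes a nontrivial transport term $\beta_\tau(\xi_j)\frac{d}{d\xi}$, with symbol $i\beta_0^1(\xi_j)\xi/(\mu_--\mu_+)$, to the local Fourier multiplier. Your ansatz $\mathfrak a_j(\xi)=-c_j|\xi|$ omits this piece entirely. Since it is of the same order as the principal part, without it the localization error $\pi_j^\e\partial\Phi(f_0)[f]-\bA_{j}[\pi_j^\e f]$ would contain a genuine first-order term and could not be controlled by $\mu\|\pi_j^\e f\|_{H^s}+K\|f\|_{H^{s'}}$, which is the inequality the whole argument hinges on. The paper's multipliers $\bA_{j,\tau}$ in \eqref{defAjtau} therefore carry both the $(-d^2/d\xi^2)^{1/2}$ part and the $d/d\xi$ part. (Sectoriality is still fine with the transport term present, since $-\alpha|\xi|+i\beta\xi$ lies in a sector around the negative real axis whenever $\alpha>0$, but the term cannot simply be discarded.)

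A smaller point: the paper also sets up a homotopy $\Psi(\tau)$ from the explicit Fourier multiplier $\Psi(0)=-\frac{\sigma}{2(\mu_++\mu_-)}(-d^2/d\xi^2)^{1/2}$ to $\Psi(1)=\partial\Phi(f_0)$, so that the resolvent a priori estimate \eqref{KDED} can be upgraded to actual invertibility of $\omega-\partial\Phi(f_0)$ via the continuity method. Your appeal to an ``abstract localization result'' can plausibly hide this, but it is worth being aware that the a priori estimate alone does not give surjectivity.
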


 The proof of Theorem \ref{T:GP}  requires some preparation.
To start,  fix $f_0\in H^s(\R)$, $s'\in(3/2,s)$, and let
$\beta_0:=\beta(f_0):=(\beta_{0}^1,\beta_{0}^2)^\top$.  We have $\beta_0\in H^{s-1}(\R)^2.$

Differentiating the relations  \eqref{PHI} and \eqref{betaf}, we get
\begin{align}\label{pPhi}
\p\Phi(f_0)[f]= \frac{1}{\mu_+-\mu_-}\langle \p\beta(f_0)[f]|(-f_0',1)^\top\rangle-\frac{\beta_0^1f'}{\mu_+-\mu_-}
\end{align}
and 
\begin{align}\label{pbeta}
(1+2a_{\mu}\bD(f_0))[\p\beta(f_0)[f]]=2a_\mu\p G(f_0)[f]-2a_\mu\p\bD(f_0)[f][\beta_0].
\end{align}

For the computation of  $\p\bD(f_0)[f][\beta_0]$  and $\p G(f_0)[f]$ we use  the relation
\[\partial B_{n,2}^0(f_0)[f][h]=n B_{n,2}(f_0,f_0)[f,f_0,\ldots f_0,h]-4B_{n+2,3}(f_0,f_0,f_0)[f,f_0,\ldots,f_0,h],\quad n\in\N,\]
see \cite[Lemma C.4]{MP2021}. 
Additionally we use Lemma \ref{L:MP0}~(iii) to rewrite this expression as
\begin{align*}
\partial B_{n,2}^0(f_0)[f][h]&=h\big(
nB^0_{n-1,2}(f_0)[f']-4B^0_{n+1,3}(f_0)[f']\big)+R_{1,n}[f,h]\\
&=h\big(n B^0_{n-1,3}(f_0)[f']+(n-4)B^0_{n+1,3}(f_0)[f']\big)+R_{1,n}[f,h],
\end{align*}
where  $n B^0_{n-1,3}(f_0):=0$ for $n=0$ and
\[\|R_{1,n}[f,h]\|_{H^{s-1}}\leq C\|h\|_{H^{s-1}}\|f\|_{H^{s'}}, \]
 with a constant $C$  independent of $f\in H^{s}(\R)$ and $h\in H^{s-1}(\R).$
Using these relations, we infer from \eqref{DFB} that
\begin{align}\label{pDE1}
(\p\bD(f_0)[f][\beta_0])_i
&=\frac{1}{\pi}\big\{B^0_{i+k-2,2}[f'\beta_0^k]
+\beta_0^k\big((i+k-2)f_0'B^0_{i+k-3,3}+(i+k-6)f_0'B^0_{i+k-1,3}\nonumber\\
&\hspace{1cm}-(i+k-1)B^0_{i+k-2,3}-(i+k-5)B^0_{i+k,3}\big)[f']\big\}+R_{2,i}[f]
\end{align}
for $i=1,\,2$, where we used the shorthand notation $B_{n,m}^0:=B_{n,m}^0(f_0)$  and
\begin{align}\label{pDE3}
\|R_{2,i}[f]\|_{H^{s-1}}\leq  C\|f\|_{H^{s'}},\quad f\in H^s(\R).
\end{align}
Taking the derivative of \eqref{vonGamma}, the same arguments yield 
\begin{equation}\label{pPsi1}
\begin{aligned}
  4\pi\sigma^{-1}\p G_i(f_0)[f]&=T_{i,1}(f_0)[f]+T_{i,2}(f_0)[f]+R_{3,i}[f],\qquad i=1,\,2,
\end{aligned}
\end{equation}
where
\begin{equation}\label{pPsi2}
\begin{aligned}
T_{1,1}(f_0)[f]:=&(B^0_{0,2}-B^0_{2,2})[(a_1+\phi_2+f_0'a_2)f']+B^0_{1,2}[(3(\phi_1+f_0'a_1)-a_2)f']\\[1ex]
&+B_{3,2}^0[(\phi_1+f_0'a_1+a_2)f'],\\[1ex]
T_{1,2}(f_0)[f]:=&\phi_1(3f_0'B_{0,3}^0-6B_{1,3}^0-6f_0'B_{2,3}^0+2B_{3,3}^0-f_0'B_{4,3}^0)[f']\\[1ex]
&+\phi_2(-B_{0,3}^0-6f_0'B_{1,3}^0+6B_{2,3}^0+2f_0'B_{3,3}^0-B_{4,3}^0)[f'],\\[1ex]
T_{2,1}(f_0)[f]:=&-B_{0,2}^0[(\phi_1+f_0'a_1+a_2)f']+(B^0_{1,2}-B^0_{3,2})[(a_1+\phi_2+f_0'a_2)f']\\[1ex]
&+B^0_{2,2}[(\phi_1+f'_0a_1-3a_2)f'],\\[1ex]
T_{2,2}(f_0)[f]:=&\phi_1(B_{0,3}^0+6f_0'B_{1,3}^0-6B_{2,3}^0-2f_0'B_{3,3}^0+B_{4,3}^0)[f']\\[1ex]
&+\phi_2(f_0'B_{0,3}^0-2B_{1,3}^0-6f_0'B_{2,3}^0+6B_{3,3}^0+f_0'B_{4,3}^0)[f'],
\end{aligned}
\end{equation}
cf. \cite[Eq. (3.7)-(3.9)]{MP2021}.
Here we used the shortened notation $a_i:=a_i(f_0)$ and $\phi_i:=\phi_i(f_0)$ for $i=1,\,2$ and
\begin{align}\label{pPsi3}
\|R_{3,i}[f]\|_{H^{s-1}}\leq  C\|f\|_{H^{s'}},\quad f\in H^s(\R).
\end{align}

In order to prove Theorem~\ref{T:GP} we  consider the path $\Psi:[0,1]\longrightarrow\kL(H^{s}(\mathbb{R}), H^{s-1}(\mathbb{R}))$ defined by
\begin{align}\label{PSI}
\Psi(\tau)[f]:= \frac{1}{\mu_+-\mu_-}\langle \kB(\tau)[f]|(-\tau f_0',1)^\top\rangle-\frac{\tau\beta_0^1 f'}{\mu_+-\mu_-} 
\end{align}
for  $\tau\in[0,1]$ and $f\in H^s(\R),$
where $\kB(\tau)[f]$ is defined by
\begin{align}\label{betatau}
(1+2\tau a_{\mu}\bD(f_0))[\kB(\tau)[f]]=2a_\mu(\p G(\tau f_0)[f]-\tau \p\bD(f_0)[f][\beta_0]).
\end{align}
Theorem \ref{T:isom2},   \eqref{pDE1}--\eqref{pPsi3}, and Lemma \ref{L:MP0} (ii) ensure that~${\kB:[0,1]\longrightarrow\kL\big(H^s(\RRM),H^{s-1}(\RRM)^2\big)}$ is well-defined, and
\begin{equation}\label{normB}
\|\kB(\tau)[f]\|_{H^{s-1}}\leq C\|f\|_{H^s},\qquad \tau\in[0,1], \, f\in H^s(\R),
\end{equation}
with $C$ independent of $f$ and $\tau$.
We also note  that  both paths $\kB$ and $\Psi$ are continuous and~${\Psi(1)=\p\Phi(f_0)}$.
Besides, since 
$$\kB(0)=2a_\mu \p G(0)=\Big(0,-\frac{ 2a_\mu\sigma}{4} H\circ \frac{d}{d \xi}\Big)^\top,$$
where  $H =\pi^{-1}B_{0,0}$ is the Hilbert transform, we  observe that $\Psi(0)$ is the Fourier multiplier 
\be\label{FMPsi0}
\Psi(0)=-\frac{\sigma }{2(\mu_++\mu_-)} H\circ \frac{d}{d\xi}=-\frac{\sigma }{2(\mu_++\mu_-)}\Big(- \frac{d^2}{d\xi^2}\Big)^{1/2}.
\ee

We next locally approximate the operator  $\Psi(\tau)$,  $\tau\in[0,1]$,   by  certain Fourier multipliers~$\bA_{j,\tau}$, cf. Theorem~\ref{T:AP} below.
For this purpose, given $\e\in(0,1)$, we choose $N=N(\e)\in\N$ and a so-called finite~$\e$-localization family, that is a set 
\[\{(\pi_j^\e,\xi_j^\e)\,|\, -N+1\leq j\leq N\}\]
 such that
\begin{align*}
\bullet\,\,\,\, \,\,&\text{$\pi_j^\e\in {\rm C}^\infty(\mathbb{R},[0,1]),$ $-N+1\leq j\leq N$, and $\sum_{j=-N+1}^N(\pi_j^\e)^2=1;$}\\[1ex]
\bullet\,\,\,\, \,\,  & \text{$ \supp \pi_j^\e $ is an interval of length $\e$ for all $|j|\leq N-1$  and $ \supp \pi_{N}^\e\subset\{|\xi|\geq 1/\e\}$;} \\[1ex]
\bullet\,\,\,\, \,\, &\text{ $ \pi_j^\e\cdot  \pi_l^\e=0$ if $[|j-l|\geq2, \max\{|j|, |l|\}\leq N-1]$ or $[|l|\leq N-2, j=N];$} \\[1ex]
 \bullet\,\,\,\, \,\, &\text{$\|(\pi_j^\e)^{(k)}\|_\infty\leq C\e^{-k}$ for all $ k\in\N, -N+1\leq j\leq N$;} \\[1ex]
 \bullet\,\,\,\, \,\, &\xi_j^\e\in\supp\pi_j^\e,\; |j|\leq N-1. 
 \end{align*} 
 The real number $\xi_N^\e$ plays no role in the analysis below.
 To each  $\e$-localization family  we associate  a norm on $H^r(\mathbb{R}),$ $r\geq 0$, which is  equivalent to the standard  norm.
Indeed,   given~${r\geq0}$ and~$\e\in(0,1)$ , there exists a constant $c=c(\e,r)\in(0,1)$ such that
\begin{align}\label{EQNO}
c\|f\|_{H^r}\leq \sum_{j=-N+1}^N\|\pi_j^\e f\|_{H^r}\leq c^{-1}\|f\|_{H^r},\qquad f\in H^r(\mathbb{R}).
\end{align}
 
To introduce the aforementioned Fourier multipliers~$\bA_{j,\tau}$,  we  first define the coefficient functions $\alpha_\tau,\, \beta_\tau:\R\longrightarrow\R$, $\tau\in[0,1]$,  by the relations
 \begin{equation}\label{defalphabeta}
 \alpha_\tau:=\frac{\sigma}{2(\mu_++\mu_-)}\big(a_2(\tau f_0)+\tau f_0'a_1(\tau f_0)\big), \qquad  \beta_\tau:= -\frac{\tau\beta_0^1 }{ \mu_+ -\mu_-} .  
 \end{equation}
 We now set 
 \begin{alignat}{2}\label{defAjtau}
\bA_{j,\tau }&:=\bA_{j,\tau}^\e&&:=- \alpha_\tau(\xi_j^\e) \Big(-\frac{d^2}{d\xi^2}\Big)^{1/2}+\beta_\tau (\xi_j^\e)\frac{d}{d\xi}, \quad |j|\leq N-1,\nonumber\\
      \bA_{N,\tau }&:=\bA_{N,\tau }^\e&&:= -  \frac{\sigma}{2(\mu_++\mu_-)} \Big(-\frac{d^2}{d\xi^2}\Big)^{1/2}.
 \end{alignat}
 We obviously have
 \[\bA_{j,\tau}\in\kL(H^s(\mathbb{R}), H^{s-1}(\mathbb{R})), \qquad\text{$-N+1\leq j\leq N$, $\tau\in[0,1]$.} \]
 
The  following estimate of the localization error is the main step in the proof of Theorem~\ref{T:GP}. 

\begin{thm}\label{T:AP} 
Let $\mu>0$ be given and fix $s'\in (3/2,s)$. 
Then there exist $\e\in(0,1)$ and  a constant $K=K(\e)$ such that 
 \begin{equation}\label{D1}
  \|\pi_j^\e \Psi(\tau) [f]-\bA_{j,\tau}[\pi^\e_j f]\|_{H^{s-1}}\leq \mu \|\pi_j^\e f\|_{H^s}+K\|  f\|_{H^{s'}}
 \end{equation}
 for all $-N+1\leq j\leq N$, $\tau\in[0,1],$  and  $f\in H^s(\mathbb{R})$. 
\end{thm}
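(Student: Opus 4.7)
The strategy is a principal-symbol / freezing-coefficients argument: reduce $\Psi(\tau)[f]$ to an explicit combination of singular integral operators $B_{n,m}^0(\tau f_0)$ applied to $f'$-type arguments, localize by multiplication with $\pi_j^\e$, freeze the smooth coefficients at $\xi_j^\e$, and recognize the result as the Fourier multiplier $\bA_{j,\tau}[\pi_j^\e f]$.

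Combining \eqref{pPhi}--\eqref{pbeta} with \eqref{pDE1}--\eqref{pPsi3}, and using that $(1+2\tau a_\mu\bD(f_0))^{-1}$ is bounded on $H^{s-1}(\R)^2$ uniformly in $\tau\in[0,1]$ by Theorem~\ref{T:isom2}, I would first write $\Psi(\tau)[f]$ as a finite linear combination of terms of the form $c(\xi)\,B_{n,m}^0(\tau f_0)[b_1,\ldots,b_n,h]$ together with a pointwise product $\beta_\tau f'$, modulo a remainder bounded in $H^{s-1}$ by $C\|f\|_{H^{s'}}$ (which will be absorbed into the $K\|f\|_{H^{s'}}$ error). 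Then for each term $B_{n,m}^0(\tau f_0)[b_1,\ldots,b_n,h]$ in which some $b_i$ depends linearly on $f$, I would apply Lemma~\ref{L:MP0}(iii) to rewrite it as $h\cdot B_{n-1,m}^0(\tau f_0)[\ldots,b_i']$ plus remainders of the same class. Iterating, this moves all $f$-dependence onto a single occurrence of $f'$ in the last slot, reducing the analysis to operators of the form $(\text{coefficient in }\tau f_0,\beta_0)\cdot B_{0,m}^0(\tau f_0)[f']$ and pointwise products with $f'$.

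I would then multiply by $\pi_j^\e$ and carry out the localization. The commutator $[\pi_j^\e,B_{0,m}^0(\tau f_0)]$ can be expanded via \eqref{invar} and \eqref{diffid} and estimated using Lemma~\ref{L:MP0}(i) by $O(\e)\|\pi_j^\e f\|_{H^s}+C\|f\|_{H^{s'}}$, which is absorbable for $\e$ small. The smooth coefficients ($\tau f_0'$, $\beta_0^1$, $a_i(\tau f_0)$, $\phi_i(\tau f_0)$, etc.) have H\"older moduli bounded on $\supp\pi_j^\e$ by $O(\e^\alpha)$ for some $\alpha>0$ (using $H^s\hookrightarrow C^{s-1/2}$), so they may be replaced by their values at $\xi_j^\e$ with an error of that order. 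On $\supp\pi_j^\e$ the operator $B_{0,m}^0(\tau f_0)$ thus becomes a constant-coefficient Fourier multiplier proportional to the Hilbert transform $H$, and composition with the remaining $\partial_\xi$ yields the $-\alpha_\tau(\xi_j^\e)(-\partial_\xi^2)^{1/2}$ piece of $\bA_{j,\tau}$, while the $\beta_\tau(\xi_j^\e)\partial_\xi$ piece arises by identical bookkeeping from the pointwise term $\beta_\tau f'$ in \eqref{pPhi}. The case $j=N$ uses $\supp\pi_N^\e\subset\{|\xi|\geq 1/\e\}$ combined with the decay $\tau f_0'(\xi),\beta_0^1(\xi)\to 0$ as $|\xi|\to\infty$ (valid since $s-1>1/2$ implies $H^{s-1}\hookrightarrow C_0$), which collapses $\alpha_\tau$ to $\sigma/(2(\mu_++\mu_-))$ and $\beta_\tau$ to $0$ on $\supp\pi_N^\e$.

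The main obstacle will be the explicit algebraic bookkeeping needed to verify that, after combining all contributions from $\partial G$, $\partial\bD[\cdot][\beta_0]$, the inner product against $(-\tau f_0',1)^\top$, and the inversion of $1+2\tau a_\mu\bD(f_0)$, the principal-symbol coefficient multiplying the Hilbert transform equals precisely $\alpha_\tau(\xi_j^\e)=\tfrac{\sigma}{2(\mu_++\mu_-)}\bigl(a_2(\tau f_0)+\tau f_0'a_1(\tau f_0)\bigr)(\xi_j^\e)$ as prescribed in \eqref{defalphabeta}--\eqref{defAjtau}. The consistency of the resulting formula at $\tau=0$ with \eqref{FMPsi0} will serve as a useful sanity check.
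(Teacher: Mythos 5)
Your plan founders at the very first reduction step. You propose to write $\Psi(\tau)[f]$, modulo a remainder of size $C\|f\|_{H^{s'}}$, as a finite linear combination of terms $c(\xi)B_{n,m}^0(\tau f_0)[b_1,\ldots,b_n,h]$ plus a pointwise product, citing Theorem~\ref{T:isom2} for the uniform boundedness of $(1+2\tau a_\mu\bD(f_0))^{-1}$. But boundedness of the inverse does not give such a representation: $\kB(\tau)[f]=2a_\mu(1+2\tau a_\mu\bD(f_0))^{-1}[\cdots]$ is the image under a genuine order-zero operator that is \emph{not} of $B_{n,m}^0$-type, and its difference from the identity, namely $-2\tau a_\mu\bD(f_0)(1+2\tau a_\mu\bD(f_0))^{-1}$, is bounded on $H^{s-1}$ but is in no sense lower order, so it cannot be absorbed into $K\|f\|_{H^{s'}}$. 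A Neumann-series expansion would produce iterated compositions $\bD(f_0)^k$ that leave the $B_{n,m}^0$-class, so the finite-sum-plus-lower-order picture is simply unavailable. After this point, your localization and freezing mechanics (commutator estimate via Lemma~\ref{L:MP0}, H\"older modulus $O(\e^\alpha)$ on $\supp\pi_j^\e$, decay at infinity for $j=N$) agree with the paper's Lemmas~\ref{L:AL1}--\ref{L:AL6}, but they are being applied to an expression you have not legitimately obtained.

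The paper's proof avoids ever eliminating the inverse. It keeps $\kB(\tau)[f]$ implicit, multiplies the defining relation \eqref{betatau} by $\pi_j^\e$, and uses two ingredients you omit. First, an a priori localized bound (Step~2, estimate \eqref{COM}): $\|\pi_j^\e\kB(\tau)[f]\|_{H^{s-1}}\leq C_\kB\|\pi_j^\e f\|_{H^s}+K\|f\|_{H^{s'}}$, proved from \eqref{COM10}--\eqref{COM11} via the commutator Lemma~\ref{L:AL1} and Theorem~\ref{T:isom2}. Second, a structural cancellation: in the decomposition \eqref{DFB}, the two matrix blocks of $\bD(f_0)$ have frozen coefficients at $\xi_j^\e$ that coincide (because $\begin{pmatrix}f_0'&f_0'^2\\f_0'^2&f_0'^3\end{pmatrix}=f_0'\begin{pmatrix}1&f_0'\\f_0'&f_0'^2\end{pmatrix}$), so the ``frozen symbol'' of $\bD(f_0)$ vanishes. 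Combining this cancellation with the Step~2 bound in Lemmas~\ref{L:AL2}--\ref{L:AL3} yields $\|\pi_j^\e\bD(f_0)[\kB(\tau)[f]]\|_{H^{s-1}}\leq\tfrac{\nu}{6|a_\mu|}\|\pi_j^\e f\|_{H^s}+K\|f\|_{H^{s'}}$, which is precisely why the inverse contributes nothing to the local Fourier multiplier $\bB_{j,\tau}$ (built from $\p G$ alone). Without the vanishing frozen symbol of $\bD(f_0)$ and the a priori estimate \eqref{COM}, the inverse's influence on the local symbol is not controllable, and the approximation claim in your final paragraph cannot be verified.
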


Before proving Theorem~\ref{T:AP} we first present some auxiliary lemmas which are used in the proof.
We start with an estimate for the commutator  $[B_{n,m}^0(f),\varphi]$ (we will apply this estimate in the particular case $\varphi=\pi_j^\e$, $-N+1\leq j\leq N$).
\smallskip

\begin{lemma}\label{L:AL1} 
Let $n,\, m \in \N$,  $s\in(3/2, 2)$, $f\in H^s(\R)$, and  ${\varphi\in {\rm C}^1(\R)}$ with uniformly continuous derivative $\varphi'$ be given. 
Then, there exist  a constant $K$ that depends only on $ n,$ $m, $ $\|\varphi'\|_\infty, $ and $\|f\|_{H^s}$  such that 
 \begin{equation}\label{LB2}
  \|\varphi B_{n,m}(f,\ldots,f)[f,\ldots,f, h]- B_{n,m}(f,\ldots,f)[f,\ldots,f, \varphi h]\|_{H^{1}}\leq K\| h\|_{2}
 \end{equation}
for all   $h\in L_2(\R)$.
\end{lemma}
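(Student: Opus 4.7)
Adding and subtracting $\varphi(\xi)h(\xi-\eta)$ under a common principal-value integrand gives the representation
\begin{equation*}
C[h](\xi) := \varphi(\xi)B^0_{n,m}(f)[h](\xi) - B^0_{n,m}(f)[\varphi h](\xi) = \PV\!\int_\R h(\xi-\eta)\,\frac{\delta_{[\xi,\eta]}\varphi}{\eta}\,\frac{\prod_{i=1}^n\bigl(\delta_{[\xi,\eta]}f/\eta\bigr)}{\prod_{i=1}^m\bigl[1+(\delta_{[\xi,\eta]}f/\eta)^2\bigr]}\, d\eta,
\end{equation*}
in which the commutator cancellation has already produced the bounded factor $\delta_{[\xi,\eta]}\varphi/\eta$ (pointwise bounded by $\|\varphi'\|_\infty$). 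Writing $\delta_{[\xi,\eta]}\varphi/\eta=\int_0^1\varphi'(\xi-t\eta)\,dt$ fits $C$ into the Calderón-commutator framework underlying Lemma~\ref{L:MP0'} (see~\cite[Remark~3.3]{MBV19}) and yields the base bound $\|C[h]\|_2\leq K_0\|h\|_2$ with $K_0$ depending only on $n,m,\|\varphi'\|_\infty$, and $\|f'\|_\infty$.

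To promote this to the $H^1$ estimate I would differentiate $C[h]$ weakly. Computing $\partial_\xi$ under the integral formally produces a term $\int h'(\xi-\eta)[\cdots]\,d\eta$ which cannot be accepted since $h\in L_2$ only; however, $\partial_\xi h(\xi-\eta)=-\partial_\eta h(\xi-\eta)$ allows one to transfer this derivative onto the remaining kernel by integration by parts in $\eta$, giving the combined formula
\begin{equation*}
(C[h])'(\xi)=\PV\!\int_\R h(\xi-\eta)\,(\partial_\xi+\partial_\eta)\!\left[\frac{\delta_{[\xi,\eta]}\varphi}{\eta}\,\frac{\prod_{i=1}^n\bigl(\delta_{[\xi,\eta]}f/\eta\bigr)}{\prod_{i=1}^m\bigl[1+(\delta_{[\xi,\eta]}f/\eta)^2\bigr]}\right]d\eta.
\end{equation*}
The decisive point is the pair of translation-invariance identities
$(\partial_\xi+\partial_\eta)(\varphi(\xi)-\varphi(\xi-\eta))=\varphi'(\xi)$ and $(\partial_\xi+\partial_\eta)(f(\xi)-f(\xi-\eta))=f'(\xi)$,
which ensure that after expansion the integrand is a finite sum of $B_{n',m'}$-type integrands involving $\varphi,\varphi',f,f'$ as bounded coefficients and acting on $h$ undifferentiated. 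Each such term is then estimated in $L_2$ by Lemma~\ref{L:MP0'}, and the embedding $H^s(\R)\hookrightarrow W^1_\infty(\R)$ (valid for $s>3/2$) controls $\|f'\|_\infty$ by $\|f\|_{H^s}$, so $\|(C[h])'\|_2\leq K_1\|h\|_2$ follows and, together with the $L_2$ bound, yields~\eqref{LB2}.

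\textbf{Main obstacle.} The principal technical difficulty is the rigorous justification of the integration by parts and the ensuing bookkeeping. Because $h$ is only $L_2$, formula~\eqref{FDER} cannot be invoked directly; one must verify the vanishing of the boundary contributions at $\eta=\pm\infty$ (using the decay of the kernel coming from the $(\delta_{[\xi,\eta]}f/\eta)^n$ factors) and organize the many terms produced by $(\partial_\xi+\partial_\eta)$ into a manageable sum in which every factor multiplying $h(\xi-\eta)$ falls within the scope of Lemma~\ref{L:MP0'}. The uniform-continuity hypothesis on $\varphi'$ is precisely what is needed in order to make the $\eta$-derivatives of $\delta_{[\xi,\eta]}\varphi/\eta$ well-defined near $\eta=0$ and to keep the resulting integrands within the framework of the available estimates.
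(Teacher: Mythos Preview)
The paper does not give a self-contained argument here; its proof is a one-line citation to \cite[Lemma~12]{AM21x}. Your sketch therefore supplies what the paper omits, and the core strategy is sound: the commutator is exactly $B_{n+1,m}(f,\ldots,f)[\varphi,f,\ldots,f,h]$, so Lemma~\ref{L:MP0'} gives the $L_2$ bound, and the extra bounded factor $\delta_{[\xi,\eta]}\varphi/\eta$ accounts for the gain of one derivative.

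One step needs tightening. Your ``decisive'' identities are written for $\delta_{[\xi,\eta]}g$, but the kernel contains the quotients $\delta_{[\xi,\eta]}g/\eta$, and $(\partial_\xi+\partial_\eta)(1/\eta)=-1/\eta^2$ does \emph{not} vanish. The correct computation is
\[
(\partial_\xi+\partial_\eta)\,\frac{\delta_{[\xi,\eta]}g}{\eta}
=\frac{g'(\xi)}{\eta}-\frac{\delta_{[\xi,\eta]}g}{\eta^2},
\]
and analogously for the factors $[1+(\delta_{[\xi,\eta]}f/\eta)^2]^{-1}$. Your conclusion still holds once this is taken into account: each term in the product-rule expansion of $(\partial_\xi+\partial_\eta)K$ is either $g'(\xi)$ (with $g\in\{\varphi,f\}$, bounded since $H^s\hookrightarrow W^1_\infty$) times the kernel of a $B_{n',m'}$-operator, or directly the kernel of a $B_{n'+1,m'}$-operator with $\varphi$ occupying one Lipschitz $b$-slot, and in either case Lemma~\ref{L:MP0'} applies.

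For rigour it is cleanest to carry out the computation first for $h\in C_c^\infty(\R)$. The commutator kernel $K$ is bounded and continuous across $\eta=0$ (here continuity of $\varphi'$ and of $f'$ enters), so in the integration by parts the contributions from $\eta=\pm\varepsilon$ cancel in the limit $\varepsilon\to 0$, while those at $\eta=\pm\infty$ vanish by compact support. This yields an explicit formula for $(C[h])'$ as a finite sum of terms each bounded in $L_2$ by $K\|h\|_2$; density of $C_c^\infty$ in $L_2$ then gives~\eqref{LB2}.
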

\begin{proof}
This result is a particular case of \cite[Lemma 12]{AM21x}.
\end{proof}

The results in Lemma~\ref{L:AL2}-Lemma~\ref{L:AL6} below describe how  to ``freeze the coefficients'' of the multilinear operators $B_{n,m}^0.$  
 For these operators, this technique has been first developed in \cite{MBV19} in the study of the Muskat problem.
\begin{lemma}\label{L:AL2} 
Let $n,\, m \in \N$, $3/2<s'<s<2$, and  $\nu\in(0,\infty)$ be given. 
Let further~${f\in H^s(\mathbb{R})}$ and  $ \oo\in \{1\}\cup H^{s-1}(\mathbb{R})$.
For any sufficiently small $\e\in(0,1)$, there is
a constant $K$ depending only on $\e,\, n,\, m,\, \|f\|_{H^s},$ and $\|\oo\|_{H^{s-1}}$ (if $\oo\neq1$)  such that 
 \begin{equation*} 
  \Big\|\pi_j^\e\oo B_{n,m}^0(f)[ h]-\frac{\oo(\xi_j^\e)(f'(\xi_j^\e))^n}{[1+(f'(\xi_j^\e))^2]^m}B_{0,0}[\pi_j^\e h]\Big\|_{H^{s-1}}\leq \nu \|\pi_j^\e h\|_{H^{s-1}}+K\| h\|_{H^{s'-1}} 
 \end{equation*}
for all $|j|\leq N-1$ and  $h\in H^{s-1}(\mathbb{R})$.
\end{lemma}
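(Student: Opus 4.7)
My goal is to approximate $\pi_j^\e\oo B_{n,m}^0(f)[h]$ by the frozen Fourier multiplier on the right-hand side of the stated estimate. The key observation is that if $\tilde f(\xi):=f(\xi_j^\e)+f'(\xi_j^\e)(\xi-\xi_j^\e)$ is the affine Taylor polynomial of $f$ at $\xi_j^\e$, then $\delta_{[\xi,\eta]}\tilde f/\eta\equiv f'(\xi_j^\e)$, so
\[
B_{n,m}^0(\tilde f)[g]=\frac{(f'(\xi_j^\e))^n}{[1+(f'(\xi_j^\e))^2]^m}\,B_{0,0}[g].
\]
Thus, modulo the scalar factor $\oo(\xi_j^\e)$ and with $g=\pi_j^\e h$, the target expression is $\oo(\xi_j^\e)B_{n,m}^0(\tilde f)[\pi_j^\e h]$. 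I would therefore reduce the left-hand side to this form via three replacements: (i) pull $\pi_j^\e$ inside the singular integral, (ii) replace $\oo$ by $\oo(\xi_j^\e)$, and (iii) replace $f$ by $\tilde f$.

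\textbf{Stages (i) and (ii).} For (i), apply Lemma~\ref{L:AL1} with $\varphi=\pi_j^\e$ (whose derivative is uniformly continuous with $\|(\pi_j^\e)'\|_\infty\leq C/\e$) to get
\[
\pi_j^\e B_{n,m}^0(f)[h]=B_{n,m}^0(f)[\pi_j^\e h]+R^{(1)}[h],\qquad \|R^{(1)}[h]\|_{H^1}\leq K(\e)\|h\|_{L_2}.
\]
Since $H^1\hookrightarrow H^{s-1}$, $H^{s'-1}\hookrightarrow L_2$, and $\oo$ is either the constant $1$ or an $H^{s-1}$-multiplier, $R^{(1)}$ contributes only to the $K\|h\|_{H^{s'-1}}$ remainder. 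For (ii), when $\oo\in H^{s-1}$, the embedding $H^{s-1}(\R)\hookrightarrow C^{s-3/2}(\R)$ (valid because $s-1>1/2$) gives $|\oo(\xi)-\oo(\xi_j^\e)|\leq C\|\oo\|_{H^{s-1}}|\xi-\xi_j^\e|^{s-3/2}$ and hence $\|(\oo-\oo(\xi_j^\e))\pi_j^\e\|_{L_\infty}\leq C\|\oo\|_{H^{s-1}}\e^{s-3/2}$; combined with Lemma~\ref{L:MP0}(ii) and the standard pointwise-multiplier structure on $H^{s-1}$ (exploiting the $\e$-localization of $\pi_j^\e$), this replacement costs at most $\nu\|\pi_j^\e h\|_{H^{s-1}}+K\|h\|_{H^{s'-1}}$ for $\e$ small enough. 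The case $\oo=1$ is trivial.

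\textbf{Stage (iii), main obstacle.} It remains to bound $(B_{n,m}^0(f)-B_{n,m}^0(\tilde f))[\pi_j^\e h]$ in $H^{s-1}$. Iterating the difference identity~\eqref{diffid} over the $m$ nonlinear slots, substituting $\tilde f$ for $f$ one slot at a time, and performing an analogous expansion in the $n$ linear-argument slots using $f=\tilde f+(f-\tilde f)$, the difference decomposes into a finite sum of terms of the form $B_{N,M}(\ldots)[\ldots,g_\ell,\ldots,\pi_j^\e h]$ with each $g_\ell\in\{f-\tilde f,\,f+\tilde f\}$. The smallness comes from $f'\in H^{s-1}\hookrightarrow C^{s-3/2}$, which yields
\[
|(f-\tilde f)'(\xi)|=|f'(\xi)-f'(\xi_j^\e)|\leq C\|f\|_{H^s}|\xi-\xi_j^\e|^{s-3/2},
\]
so $f-\tilde f$ is $O(\e^{s-1/2})$ and $(f-\tilde f)'$ is $O(\e^{s-3/2})$ on $\supp\pi_j^\e$. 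Splitting $f-\tilde f$ into a near part supported in a slight enlargement of $\supp\pi_j^\e$ and a complementary far part, Lemma~\ref{L:MP0}(iii) applied with $s'$ in place of $s$ converts the near-part smallness into the leading contribution $\nu\|\pi_j^\e h\|_{H^{s-1}}$ while simultaneously providing the subleading $\|h\|_{H^{s'-1}}$-type remainder; the far part, whose support is separated from $\supp(\pi_j^\e h)$, produces a smoothing operator whose contribution is also absorbed into $K\|h\|_{H^{s'-1}}$. The main difficulty here is the bookkeeping: iterating~\eqref{diffid} generates a proliferation of terms, and each one must be matched derivative-by-derivative against the refinement in Lemma~\ref{L:MP0}(iii) so that exactly one factor carries the $\e$-small prefactor while the remaining factors are absorbed into $K(\e)$. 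The subcriticality $s'<s$ is essential: it is precisely what allows Lemma~\ref{L:MP0}(iii) to cleanly separate the leading $\nu$-term from the subleading $\|h\|_{H^{s'-1}}$-remainder.
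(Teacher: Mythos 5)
The paper does not give its own argument for this lemma — it defers to \cite[Lemma~13]{AM21x} — so there is no internal proof to match your proposal against; I can only assess the proposal on its merits.

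Your overall plan (freeze coefficients against the affine Taylor polynomial $\tilde f$ of $f$ at $\xi_j^\e$, then control $B^0_{n,m}(f)-B^0_{n,m}(\tilde f)$ by a near/far splitting) is the right conceptual picture, and Stages~(i) and~(ii) are essentially correct as outlined. However, Stage~(iii) as written has a genuine gap. The fundamental obstacle is that $\tilde f\notin H^s(\R)$: an affine function has no decay, so once you iterate \eqref{diffid} to telescope $f\to\tilde f$ slot by slot, you produce operators $B_{N,M}$ whose \emph{nonlinear} arguments (and several of whose linear arguments) contain $\tilde f$, $\tilde f+f$, etc. None of the $H^{s-1}$- or $H^{s'}$-based estimates you then wish to invoke — Lemma~\ref{L:MP0}(ii) and (iii) — are available for such arguments; their hypotheses require $a_i,b_i\in H^s(\R)$. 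Only the $L_2$/Lipschitz estimate of Lemma~\ref{L:MP0'} applies to $\tilde f$, and this alone is not enough to close the $H^{s-1}$ bound. Your proposed fix (split $f-\tilde f$ into a near part supported in a thin enlargement of $\supp\pi_j^\e$ and a far part) addresses the $b$-slot $g_\ell$ but not the $a$-slots into which $\tilde f$ has already been pushed by \eqref{diffid}, nor the far part itself, which is still not in $H^s$.

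A second, independent issue is the role you assign to Lemma~\ref{L:MP0}(iii). That lemma is a commutator estimate: it moves $b_1$ out in front as a pointwise multiplier at the cost of $\|b_1\|_{H^{s'}}\|h\|_{H^{s-1}}$. It does not extract a small $\nu$-type prefactor from $b_1$; the $\nu$-smallness has to come from showing $\|b_1\|_{H^{s'}}=O(\e^\gamma)$ for some $\gamma>0$. Your Hölder bounds $|f-\tilde f|\lesssim\e^{s-1/2}$ and $|(f-\tilde f)'|\lesssim\e^{s-3/2}$ on $\supp\pi_j^\e$ give smallness in $L_\infty$ and $W^1_\infty$ on the support, but $H^{s'}$-smallness of the near part does not follow from these pointwise bounds without an interpolation argument that keeps the $\e^{-1}$ factors from the cutoff's derivatives under control — and this is precisely where the ``bookkeeping'' you acknowledge is not a mere nuisance but the crux of the proof. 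As it stands, the step ``Lemma~\ref{L:MP0}(iii) converts the near-part smallness into the leading contribution'' is asserted, not established. To repair the argument, one should either (a) avoid $\tilde f$ entirely and estimate the kernel difference $(\delta_{[\xi,\eta]}f/\eta)^n[\ldots]^{-m}-(f'(\xi_j^\e))^n[\ldots]^{-m}$ directly, splitting the $\eta$-integral at a scale $\delta(\e)$ and using Lemma~\ref{L:MP0'} on the near diagonal with interpolation between $L_2$- and $H^1$-type bounds, or (b) replace $\tilde f$ by an $H^s$ interpolant that agrees with $\tilde f$ only on a neighborhood of $\supp\pi_j^\e$ before invoking \eqref{diffid}, and then track carefully how the cutoff scales propagate through the Sobolev estimates.
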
  
\begin{proof}
See \cite[Lemma~13]{AM21x}. 
\end{proof}

We now provide a similar result as in Lemma~\ref{L:AL2}, the difference to the latter  being that the linear argument of $B_{n,m}$ is now multiplied by a function $a$ that also needs to be frozen at~$\xi_j^\e$.

\begin{lemma}\label{L:AL3} 
Let $n,\, m \in \N$, $3/2<s'<s<2$, and  $\nu\in(0,\infty)$ be given. 
Let further~${f\in H^s(\mathbb{R})}$, $a\in  H^{s-1}(\mathbb{R})$, and $\oo\in \{1\}\cup H^{s-1}(\mathbb{R})$.
For any sufficiently small $\e\in(0,1)$, there is
a constant $K$ only depending on $\e,$ $ n,$ $ m,$ $ \|f\|_{H^s},$  $\|a\|_{H^{s-1}},$  and $ \|\oo\|_{H^{s-1}}$ (if $\oo\neq1$)  such that 
 \begin{equation*} 
\Big\|\pi_j^\e\oo B_{n,m}^0(f)[ ah]-\frac{a(\xi_j^\e)\oo(\xi_j^\e)(f'(\xi_j^\e))^n}{[1+(f'(\xi_j^\e))^2]^m}B_{0,0}[\pi_j^\e h]\Big\|_{H^{s-1}}\leq \nu \|\pi_j^\e h\|_{H^{s-1}}+K\| h\|_{H^{s'-1}}
 \end{equation*}
for all $|j|\leq N-1$ and  $h\in H^{s-1}(\mathbb{R})$.
\end{lemma}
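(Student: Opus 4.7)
The strategy is to reduce Lemma~\ref{L:AL3} to Lemma~\ref{L:AL2} by \emph{freezing} the coefficient $a$ at the localisation point $\xi_j^\e$. Since $s-1>1/2$, the embedding $H^{s-1}(\R)\hookrightarrow C^{s-3/2}(\R)$ supplies both the uniform bound $|a(\xi_j^\e)|\leq c\|a\|_{H^{s-1}}$ and the local H\"older estimate $|a(\xi)-a(\xi_j^\e)|\leq C\|a\|_{H^{s-1}}|\xi-\xi_j^\e|^{s-3/2}$ for $\xi\in\supp\pi_j^\e$, both of which will drive the error control.

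The natural decomposition
\begin{equation*}
\pi_j^\e\omega B^0_{n,m}(f)[ah]=a(\xi_j^\e)\,\pi_j^\e\omega B^0_{n,m}(f)[h]+\pi_j^\e\omega B^0_{n,m}(f)[(a-a(\xi_j^\e))h]
\end{equation*}
isolates a first summand to which Lemma~\ref{L:AL2} can be applied with the refined accuracy $\nu/(2(1+c\|a\|_{H^{s-1}}))$. This produces precisely the target frozen-coefficient term $\frac{a(\xi_j^\e)\omega(\xi_j^\e)(f'(\xi_j^\e))^n}{[1+(f'(\xi_j^\e))^2]^m}B_{0,0}[\pi_j^\e h]$ with remainder controlled by $(\nu/2)\|\pi_j^\e h\|_{H^{s-1}}+K\|h\|_{H^{s'-1}}$.

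The heart of the argument is then the residual estimate
\begin{equation*}
\|\pi_j^\e\omega B^0_{n,m}(f)[(a-a(\xi_j^\e))h]\|_{H^{s-1}}\leq \tfrac{\nu}{2}\|\pi_j^\e h\|_{H^{s-1}}+K\|h\|_{H^{s'-1}}.
\end{equation*}
To obtain it, I would introduce a companion cutoff $\tilde\pi_j^\e\in C^\infty_c(\R)$ with $\tilde\pi_j^\e\equiv 1$ on a neighbourhood of $\supp\pi_j^\e$ and $\supp\tilde\pi_j^\e$ itself of size $O(\e)$, and split $(a-a(\xi_j^\e))h=\tilde\pi_j^\e(a-a(\xi_j^\e))h+(1-\tilde\pi_j^\e)(a-a(\xi_j^\e))h$. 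For the localised piece, the pointwise bound $\|\tilde\pi_j^\e(a-a(\xi_j^\e))\|_\infty\leq C\|a\|_{H^{s-1}}\e^{s-3/2}$, combined with the continuity of $B^0_{n,m}(f)$ on $H^{s-1}(\R)$ from Lemma~\ref{L:MP0}(ii), the fractional Leibniz rule, and the Banach algebra property of $H^{s-1}(\R)$, yields a bound of order $\e^{s-3/2}\|\pi_j^\e h\|_{H^{s-1}}+K_\e\|h\|_{H^{s'-1}}$ after a Gagliardo--Nirenberg interpolation; choosing $\e$ small absorbs the factor $\e^{s-3/2}$ into $\nu/2$. For the non-localised piece, the supports of $\pi_j^\e(\xi)$ and $(1-\tilde\pi_j^\e)(\xi-\eta)$ force $|\eta|\gtrsim\e$ inside the defining singular integral of $B^0_{n,m}(f)$, so the effective kernel has $1/\eta$ replaced by $O(\e^{-1})$, and this contribution is absorbed into $K\|h\|_{H^{s'-1}}$ via a Young-type convolution estimate.

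The main obstacle is the low regularity $a\in H^{s-1}\setminus H^{s'}$ (in general), which prevents a direct appeal to Lemma~\ref{L:MP0}(ii) or (iii) for any commutator expression involving $a$ of the form $B^0_{n,m}(f)[ah]-aB^0_{n,m}(f)[h]$. Instead, control has to be extracted from the pointwise H\"older regularity of $a$ together with the explicit splitting of the singular integral relative to $\xi_j^\e$ as described above. Apart from this extra bookkeeping around $a$, the proof runs parallel to that of Lemma~\ref{L:AL2}, cf.~\cite[Lemma~13]{AM21x}.
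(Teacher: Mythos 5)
Your reduction to Lemma~\ref{L:AL2} by splitting off the frozen value $a(\xi_j^\e)$ is the natural strategy, and the auxiliary observations (H\"older continuity of $a$, and the inapplicability of Lemma~\ref{L:MP0}(ii)/(iii) to a commutator involving $a$) are correct. The gap is in your residual estimate for $\pi_j^\e\oo B^0_{n,m}(f)[(a-a(\xi_j^\e))h]$: as stated, the decomposition $(a-a(\xi_j^\e))h=\tilde\pi_j^\e(a-a(\xi_j^\e))h+(1-\tilde\pi_j^\e)(a-a(\xi_j^\e))h$ applied inside $B^0_{n,m}(f)$ followed by the product rule gives a factor $\e^{s-3/2}$ in front of $\|h\|_{H^{s-1}}$, \emph{not} $\|\pi_j^\e h\|_{H^{s-1}}$, and the appeal to ``fractional Leibniz, Banach algebra, Gagliardo--Nirenberg'' does not by itself explain how the cutoff migrates onto $h$. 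This matters because, in the subsequent proof of Theorem~\ref{T:GP}, the bounds are summed over $j$ via \eqref{EQNO}, and a term $\e^{s-3/2}\|h\|_{H^{s-1}}$ multiplied by $N(\e)$ is not small.

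The clean way to close the gap is to commute the outer cutoff inside first. Write
\[
\pi_j^\e\oo B^0_{n,m}(f)[(a-a(\xi_j^\e))h]
=\oo\, B^0_{n,m}(f)\bigl[\pi_j^\e(a-a(\xi_j^\e))h\bigr]
+\oo\,\bigl[\pi_j^\e, B^0_{n,m}(f)\bigr]\bigl[(a-a(\xi_j^\e))h\bigr],
\]
where the commutator is controlled by Lemma~\ref{L:AL1} in $H^1\hookrightarrow H^{s-1}$ by a constant times $\|(a-a(\xi_j^\e))h\|_2\leq K\|h\|_{H^{s'-1}}$. For the main term, use the key identity $\pi_j^\e(a-a(\xi_j^\e))h=\bigl(\chi_j^\e(a-a(\xi_j^\e))\bigr)(\pi_j^\e h)$, valid since $\chi_j^\e\pi_j^\e=\pi_j^\e$; then Lemma~\ref{L:MP0}(ii) and the product estimate \eqref{MES} give
\[
\Bigl\|\oo\, B^0_{n,m}(f)\bigl[\bigl(\chi_j^\e(a-a(\xi_j^\e))\bigr)(\pi_j^\e h)\bigr]\Bigr\|_{H^{s-1}}
\leq C\bigl(\|\chi_j^\e(a-a(\xi_j^\e))\|_\infty\|\pi_j^\e h\|_{H^{s-1}}+K\|\pi_j^\e h\|_\infty\bigr),
\]
and $\|\chi_j^\e(a-a(\xi_j^\e))\|_\infty\leq C\e^{s-3/2}$ by the $C^{s-3/2}$-embedding, while $\|\pi_j^\e h\|_\infty\leq C\|h\|_{H^{s'-1}}$ since $s'-1>1/2$. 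Choosing $\e$ small yields exactly the target form $\tfrac{\nu}{2}\|\pi_j^\e h\|_{H^{s-1}}+K\|h\|_{H^{s'-1}}$. With this replacement your argument is complete; the rest of your write-up (scaling of $\nu$ by $1+c\|a\|_{H^{s-1}}$ when invoking Lemma~\ref{L:AL2}) is fine.
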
  
\begin{proof}
See \cite[Lemma~D.5]{MP2021}.
\end{proof}

Lemma~\ref{L:AL4} and Lemma \ref{L:AL5} are the analogues of Lemma \ref{L:AL2}  corresponding to the case~$j=N$.

\begin{lemma}\label{L:AL4} 
Let $n,\, m \in \N$,  $3/2<s'<s<2$, and  $\nu\in(0,\infty)$ be given. 
Let further~${f\in H^s(\mathbb{R})}$ and  $\oo\in  H^{s-1}(\mathbb{R})$.
For any sufficiently small  $\e\in(0,1)$, there is a constant~$K$ depending only on $\e,\, n,\, m,\, \|f\|_{H^s},$ and $\|\oo\|_{H^{s-1}}$ such that 
  \begin{equation*}
  \|\pi_N^\e\oo B_{n,m}^0(f)[h]\|_{H^{s-1}}\leq \nu \|\pi_N^\e h\|_{H^{s-1}}+K\| h\|_{H^{s'-1}}
 \end{equation*} 
 for $h\in H^{s-1}(\mathbb{R})$.
\end{lemma}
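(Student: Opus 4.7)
The plan is to mimic the freezing-of-coefficients argument of Lemma~\ref{L:AL2}, adapted to the boundary case $j=N$. The crucial new input is that, on $\supp\pi_N^\e=\{|\xi|\geq 1/\e\}$, we cannot exploit a length-$\e$ interval as in the proof of Lemma~\ref{L:AL2} for $|j|\leq N-1$; instead, the embeddings $H^s(\R)\hookrightarrow C^1_0(\R)$ (valid since $s-1>1/2$) and $H^{s-1}(\R)\hookrightarrow C_0(\R)$ ensure that $f'(\xi)\to 0$ and $\oo(\xi)\to 0$ as $|\xi|\to\infty$. Setting
\[
M(\e):=\sup_{|\xi|\geq 1/\e}|f'(\xi)|+\sup_{|\xi|\geq 1/\e}|\oo(\xi)|,
\]
we have $M(\e)\to 0$ as $\e\to 0$, and for any choice of $\xi_N^\e\in\supp\pi_N^\e$ both $|f'(\xi_N^\e)|$ and $|\oo(\xi_N^\e)|$ are $\leq M(\e)$.

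The first step is to fix an arbitrary $\xi_N^\e\in\supp\pi_N^\e$ (its value is immaterial, since the statement does not constrain $\xi_N^\e$) and carry out the freezing manipulation of Lemma~\ref{L:AL2} verbatim: in the integrand of $B_{n,m}^0(f)$, replace each difference quotient $\delta_{[\xi,\eta]}f/\eta$ by $f'(\xi_N^\e)$ and the outer factor $\oo$ by $\oo(\xi_N^\e)$, using the commutator estimate of Lemma~\ref{L:AL1}, the shift invariance~\eqref{invar}, and the difference identity~\eqref{diffid}. This should produce a decomposition
\[
\pi_N^\e\oo B_{n,m}^0(f)[h]=\frac{\oo(\xi_N^\e)(f'(\xi_N^\e))^n}{[1+(f'(\xi_N^\e))^2]^m}\,B_{0,0}[\pi_N^\e h]+R[h],
\]
where the remainder $R[h]$ is bounded in $H^{s-1}$ by $K\|h\|_{H^{s'-1}}$ with $K=K(\e,n,m,\|f\|_{H^s},\|\oo\|_{H^{s-1}})$; the multilinear bounds needed here are exactly the ones from Lemma~\ref{L:MP0}(i)--(iii) together with Lemma~\ref{L:AL1}.

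The second step is to estimate the explicit main term. Since $B_{0,0}$ is a multiple of the Hilbert transform and hence bounded on $H^{s-1}(\R)$, and since for $\e$ so small that $M(\e)\leq 1$ the frozen coefficient satisfies $|\oo(\xi_N^\e)(f'(\xi_N^\e))^n/[1+(f'(\xi_N^\e))^2]^m|\leq M(\e)$ (distinguishing $n=0$ from $n\geq 1$), the main term is controlled in $H^{s-1}$ by $CM(\e)\|\pi_N^\e h\|_{H^{s-1}}$. Given $\nu>0$, we then pick $\e>0$ small enough that $CM(\e)\leq\nu$; combining with the remainder estimate yields the claimed inequality.

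The step I expect to be the main obstacle is the first one: constructing the decomposition with a remainder bounded by $K\|h\|_{H^{s'-1}}$ when $\supp\pi_N^\e$ is unbounded. In the case $|j|\leq N-1$ treated in Lemma~\ref{L:AL2}, the length-$\e$ compactness of $\supp\pi_j^\e$ together with the Hölder continuity of $f'\in H^{s-1}$ absorbs the correction terms generated when each $\delta_{[\xi,\eta]}f/\eta$ is replaced by $f'(\xi_j^\e)$; here the analogous absorption has to be carried through using the tail smallness encoded in $M(\e)$, together with repeated applications of~\eqref{diffid} and the multilinear estimates of Lemma~\ref{L:MP0}. Once this remainder analysis is executed, the remaining ingredients (boundedness of $B_{0,0}$ on $H^{s-1}$ and smallness of the frozen coefficient) are routine.
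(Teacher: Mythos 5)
The proposal is a plan whose decisive step is explicitly left unexecuted (you yourself flag the decomposition with remainder bound as ``the main obstacle''), so as it stands it does not constitute a proof. Moreover, a ``verbatim'' transfer of the Lemma~\ref{L:AL2} argument to $j=N$ runs into a concrete obstruction that your sketch does not address: the mechanism of Lemma~\ref{L:AL2} subtracts the frozen constants $\oo(\xi_j^\e)$ and $f'(\xi_j^\e)$ on the support of the cutoff, and for $|j|\leq N-1$ the resulting differences $\chi_j^\e(\oo-\oo(\xi_j^\e))$, $\chi_j^\e(f'-f'(\xi_j^\e))$ lie in $H^{s-1}(\R)$ (so that \eqref{MES} applies) precisely because $\chi_j^\e$ has compact support. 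For $j=N$ the cutoffs $\pi_N^\e,\chi_N^\e$ are identically $1$ on an unbounded set, hence $\chi_N^\e\oo(\xi_N^\e)$ (and likewise for $f'$) is not even in $L_2(\R)$ unless the frozen value is $0$; the compact-support argument therefore does not carry over without a genuinely different handling of the tail region, and tail smallness of $M(\e)$ in $L^\infty$ alone does not produce the needed $H^{s-1}$ smallness of these differences. A secondary imprecision: your claimed remainder bound $\|R[h]\|_{H^{s-1}}\leq K\|h\|_{H^{s'-1}}$ is stronger than what the analogue Lemma~\ref{L:AL2} delivers (there the remainder also has a $\nu\|\pi_j^\e h\|_{H^{s-1}}$ term); this does not affect the final conclusion but suggests the argument was not traced through.

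Note that freezing the $f'$-arguments is not actually needed here: the smallness required by Lemma~\ref{L:AL4} can be extracted entirely from the vanishing of $\oo$ at infinity. Writing $\pi_N^\e\oo B^0_{n,m}(f)[h]=(\chi_N^\e\oo)\pi_N^\e B^0_{n,m}(f)[h]$, commuting $\pi_N^\e$ through $B^0_{n,m}(f)$ via Lemma~\ref{L:AL1} (contributing $K\|h\|_{H^{s'-1}}$), and then estimating $(\chi_N^\e\oo)B^0_{n,m}(f)[\pi_N^\e h]$ by \eqref{MES} gives the claim: the term $\|\chi_N^\e\oo\|_\infty\|B^0_{n,m}(f)[\pi_N^\e h]\|_{H^{s-1}}$ carries the small factor since $\oo\in H^{s-1}(\R)\hookrightarrow C_0(\R)$, while the term $\|B^0_{n,m}(f)[\pi_N^\e h]\|_\infty\|\chi_N^\e\oo\|_{H^{s-1}}$ is lower order because $H^{s'-1}(\R)\hookrightarrow L^\infty(\R)$ and $B^0_{n,m}(f)$ is bounded on $H^{s'-1}(\R)$ by Lemma~\ref{L:MP0}~(ii). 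This route bypasses the constant-subtraction issue and makes the role of $\oo\in H^{s-1}(\R)$ (as opposed to $\oo=1$, which is handled separately in Lemma~\ref{L:AL5}) transparent.
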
  
\begin{proof}
See  \cite[Lemma~14]{AM21x}. 
\end{proof}

Lemma \ref{L:AL5} is the counterpart of Lemma~\ref{L:AL4} in the case when $\oo=1$.

\begin{lemma}\label{L:AL5} 
Let $n,\, m \in \N$,  $3/2<s'<s<2$, and  $\nu\in(0,\infty)$ be given. 
Let further~${f\in H^s(\mathbb{R})}$.
For any sufficiently small  $\e\in(0,1)$, 
there is a constant~$K$ depending only on $\e,\, n,\, m,$ and $ \|f\|_{H^s}$  such that 
 \begin{equation*} 
  \|\pi_N^\e B_{0,m}^0(f)[ h]-B_{0,0}[\pi_N^\e h]\|_{H^{s-1}}\leq \nu \|\pi_N^\e h\|_{H^{s-1}}+K\| h\|_{H^{s'-1}}
 \end{equation*}
 and 
  \begin{equation*}
  \|\pi_N^\e B_{n,m}^0(f)[h]\|_{H^{s-1}}\leq \nu \|\pi_N^\e h\|_{H^{s-1}}+K\| h\|_{H^{s'-1}},\qquad n\geq 1,
 \end{equation*}
 for  all $h\in H^{s-1}(\mathbb{R})$.
\end{lemma}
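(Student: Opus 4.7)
The plan is to reduce the first statement to the second via a binomial expansion, then establish the second by combining a commutator reduction (for the localizer $\pi_N^\varepsilon$), an $H^s$-truncation of $f$ (for Lipschitz control of the tail), and a support-separation argument for the remaining compactly supported part.

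For the reduction, observe the algebraic identity
\[
\frac{1}{(1+y)^m}-1 \;=\; -\sum_{k=1}^m\binom{m}{k}\,\frac{y^k}{(1+y)^m},
\]
which, after substituting $y = (\delta_{[\xi,\eta]}f/\eta)^2$ inside the integrand defining $B_{0,m}^0(f)$, yields
\[
B_{0,m}^0(f)[h] - B_{0,0}[h] \;=\; -\sum_{k=1}^m\binom{m}{k}\,B_{2k,m}^0(f)[h].
\]
Each summand has $n=2k\geq 1$, so applying the second estimate (with a smaller parameter) to each, summing, and absorbing the commutator of $\pi_N^\varepsilon$ with $B_{0,0}$ via Lemma~\ref{L:AL1} produces the first estimate.

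For the second estimate, fix $n\geq 1$. Lemma~\ref{L:AL1} applied with $\varphi=\pi_N^\varepsilon$ gives
\[
\pi_N^\varepsilon B_{n,m}^0(f)[h] \;=\; B_{n,m}^0(f)[\pi_N^\varepsilon h] + R_1,\qquad \|R_1\|_{H^1}\leq K(\varepsilon,n,m,\|f\|_{H^s})\|h\|_2,
\]
so $\|R_1\|_{H^{s-1}}\leq K\|h\|_{H^{s'-1}}$, using $s-1<1$ together with $H^{s'-1}\hookrightarrow L_2$ (since $s'-1>0$). Next, by density of ${\rm C}_c^\infty(\R)$ in $H^s(\R)$, decompose $f = f_c + f_\infty$ with $f_c\in {\rm C}_c^\infty(\R)$, $\supp f_c\subset[-R,R]$, and $\|f_\infty\|_{H^s}<\delta$. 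Expanding $B_{n,m}^0(f)-B_{n,m}^0(f_c)$ through the difference identity \eqref{diffid} and applying Lemma~\ref{L:MP0}(ii) to each term (every one of which carries at least one factor of $f_\infty$) yields
\[
\|(B_{n,m}^0(f)-B_{n,m}^0(f_c))[\pi_N^\varepsilon h]\|_{H^{s-1}} \;\leq\; C(\|f\|_{H^s})\,\delta\,\|\pi_N^\varepsilon h\|_{H^{s-1}};
\]
choose $\delta$ so this constant is at most $\nu$.

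It remains to estimate $B_{n,m}^0(f_c)[g]$ with $g:=\pi_N^\varepsilon h$. Restrict $\varepsilon < 1/(2R)$ so that $\supp g\subset\{|y|\geq 1/\varepsilon\}$ is disjoint from $\supp f_c$. For $|\xi|>R$, both $f_c(\xi)=0$ and $f_c(y)=0$ on $\supp g$, so $(f_c(\xi)-f_c(y))^n=0$ and $B_{n,m}^0(f_c)[g]$ vanishes; the output is therefore supported in $[-R,R]$. For $|\xi|\leq R$, using $f_c(y)=0$ on $\supp g$,
\[
B_{n,m}^0(f_c)[g](\xi) \;=\; f_c(\xi)^n \int_{|y|\geq 1/\varepsilon}\frac{g(y)\,dy}{(\xi-y)^{n+1}\bigl[1+(f_c(\xi)/(\xi-y))^2\bigr]^m},
\]
a non-singular integral since $|\xi-y|\geq 1/(2\varepsilon)$ on the domain. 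Cauchy--Schwarz, using $n+1\geq 2$ so that $|\xi-y|^{-(n+1)}\in L_2(\{|y|\geq 1/\varepsilon\})$, gives $\|B_{n,m}^0(f_c)[g]\|_{L_\infty}\leq C_\varepsilon\|g\|_2$; differentiating under the integral sign produces analogous estimates on all $\xi$-derivatives, and the compact $\xi$-support upgrades this to $\|B_{n,m}^0(f_c)[g]\|_{H^{s-1}}\leq K(\varepsilon,f_c)\|g\|_2\leq K\|h\|_{H^{s'-1}}$, again via $H^{s'-1}\hookrightarrow L_2$. The principal obstacle is precisely this last quantitative upgrade: converting the pointwise kernel decay afforded by the separation of supports into an $H^{s-1}$-bound independent of $\|g\|_{H^{s-1}}$ requires careful differentiation under the integral sign and tracking of constants through the support analysis, with $\varepsilon$ chosen after $R=R(\delta)=R(\nu,f)$.
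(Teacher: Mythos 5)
Your proof is correct and complete in substance. The paper itself defers the proof to Lemma~15 of \cite{AM21x}, so no direct comparison with the paper's own argument is possible; but your chain of steps is sound, and it is exactly the kind of argument one expects here. The binomial identity reducing $B_{0,m}^0(f)-B_{0,0}$ to a sum of $B_{2k,m}^0(f)$ (each with at least one numerator factor) is verified directly from the definition \eqref{BNM}. The commutator reduction via Lemma~\ref{L:AL1} with $\varphi=\pi_N^\e$ is valid because $\pi_N^\e\in{\rm C}^\infty$ with $(\pi_N^\e)'$ Lipschitz and $\|(\pi_N^\e)'\|_\infty\leq C/\e$, and the needed embeddings $H^1\hookrightarrow H^{s-1}$ and $H^{s'-1}\hookrightarrow L_2$ both hold since $s-1<1$ and $s'-1>1/2>0$. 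The $H^s$-truncation $f=f_c+f_\infty$ with $\|f_\infty\|_{H^s}<\delta$ combined with the ${\rm C}^{1-}$-regularity from Lemma~\ref{L:MP0}(ii) (together with multilinearity in the numerator arguments) yields the $\nu$-absorbable term. Finally, since $n\geq1$, the output $B_{n,m}^0(f_c)[\pi_N^\e h]$ is supported in $[-R,R]$ and the kernel is non-singular on $\{|y|\geq1/\e\}$; the Cauchy--Schwarz bound and one differentiation under the integral sign then give the $H^1$-bound by $\|g\|_2\leq\|h\|_{H^{s'-1}}$. Two harmless imprecisions: the remark ``$n+1\geq 2$'' is not the actual reason $|\xi-y|^{-(n+1)}\in L_2$ on the separated region --- the restriction $|\xi-y|\geq1/(2\e)$ alone suffices for all $n\geq0$; and the phrase ``expanding through \eqref{diffid}'' should be read as a telescoping over all $n+m$ slots (multilinearity for the $b$-slots and \eqref{diffid} for the $a$-slots), which is what Lemma~\ref{L:MP0}(ii)'s ${\rm C}^{1-}$ statement encodes. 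Also worth noting: although $R=R(f,\delta)$ depends on $f$ beyond $\|f\|_{H^s}$, once $\e<1/(2R)$ is imposed the separated-supports constant can be re-expressed using $R<1/(2\e)$ and $\|f_c\|_{{\rm C}^1}\lesssim\|f\|_{H^s}+1$, so the final $K$ depends on $\e,n,m,\|f\|_{H^s}$ as the lemma asserts, while the smallness threshold on $\e$ may depend on $f$ and $\nu$ --- which is consistent with the quantifier structure of the statement.
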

\begin{proof}
See  \cite[Lemma~15]{AM21x}.
\end{proof}

Finally, Lemma~\ref{L:AL6} below is the analogue of Lemma~\ref{L:AL3} corresponding to the case~${j=N}$.
 
\begin{lemma}\label{L:AL6} 
Let $n,\, m \in \N$, $3/2<s'<s<2$, and  $\nu\in(0,\infty)$ be given. 
Let further~${f\in H^s(\mathbb{R})}$, $a\in H^{s-1}(\mathbb{R})$, and $\oo\in \{1\}\cup H^{s-1}(\mathbb{R})$.
For any sufficiently small $\e\in(0,1)$, there is
a constant $K$ depending on $\e,$ $ n,$ $ m,$ $ \|f\|_{H^s},$  $\|a\|_{H^{s-1}},$ and $ \|\oo\|_{H^{s-1}}$ (if $\oo\neq1$)  such that 
 \begin{equation*} 
\begin{aligned}
  &\|\pi_N^\e\oo B_{n,m}^0(f)[ ah]\|_{H^{s-1}}\leq \nu \|\pi_N^\e h\|_{H^{s-1}}+K\| h\|_{H^{s'-1}}
\end{aligned}  
 \end{equation*}
for  all $h\in H^{s-1}(\mathbb{R})$.
\end{lemma}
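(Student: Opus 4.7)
The strategy is to reduce Lemma~\ref{L:AL6} to Lemma~\ref{L:AL4} (in the case $\oo\in H^{s-1}(\R)$) or Lemma~\ref{L:AL5} (in the case $\oo=1$) by applying them with $ah$ in place of $h$. A naive substitution produces a leading term $\nu'\|\pi_N^\e(ah)\|_{H^{s-1}}=\nu'\|a\pi_N^\e h\|_{H^{s-1}}$, and the Banach algebra property of $H^{s-1}(\R)$ (valid since $s-1>1/2$) only gives $\nu' C\|a\|_{H^{s-1}}\|\pi_N^\e h\|_{H^{s-1}}$, which is not an arbitrarily small coefficient. The plan is to exploit the fact that $a\in H^{s-1}(\R)\hookrightarrow C_0(\R)$ decays at infinity, so that $a$ is effectively small on $\supp\pi_N^\e\subset\{|\xi|\geq 1/\e\}$.

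Given $\nu>0$, I would use density of $C_c^\infty(\R)$ in $H^{s-1}(\R)$ to split $a=a_1+a_2$ with $\supp a_1\subset[-R,R]$ and $\|a_2\|_{H^{s-1}}<\delta$ for a $\delta=\delta(\nu)$ to be fixed small, and decompose
\[\pi_N^\e\oo B_{n,m}^0(f)[ah]=\pi_N^\e\oo B_{n,m}^0(f)[a_1h]+\pi_N^\e\oo B_{n,m}^0(f)[a_2h].\]
For the compactly supported piece, taking $\e<1/(2R)$ forces $\pi_N^\e a_1\equiv 0$, so that $\pi_N^\e(a_1h)=0$. Applying Lemma~\ref{L:AL4} or Lemma~\ref{L:AL5} with $a_1h$ in place of $h$, the leading $\|\pi_N^\e(a_1h)\|_{H^{s-1}}$-term vanishes, as does the $B_{0,0}[\pi_N^\e(a_1h)]$-term that appears for $n=0$, $\oo=1$. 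Only the remainder $K\|a_1h\|_{H^{s'-1}}\leq K'\|h\|_{H^{s'-1}}$ survives, using that $H^{s'-1}(\R)$ is a Banach algebra (since $s'>3/2$).

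For the small piece, applying the same lemma with $a_2h$ in place of $h$ gives, again via Banach algebra estimates, a leading term
\[\nu'\|\pi_N^\e(a_2h)\|_{H^{s-1}}=\nu'\|a_2\pi_N^\e h\|_{H^{s-1}}\leq\nu' C\delta\|\pi_N^\e h\|_{H^{s-1}}\]
and a remainder $K\|a_2h\|_{H^{s'-1}}\leq KC\delta\|h\|_{H^{s'-1}}$; in the sub-case $n=0$, $\oo=1$, there is the additional contribution $\|B_{0,0}[\pi_N^\e a_2h]\|_{H^{s-1}}\leq C_H\|a_2\pi_N^\e h\|_{H^{s-1}}\leq C_HC\delta\|\pi_N^\e h\|_{H^{s-1}}$, since $B_{0,0}$ is (up to a constant) the Hilbert transform and hence bounded on $H^{s-1}(\R)$. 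Choosing $\delta$ sufficiently small in terms of $\nu$ and the fixed constants, then $R$ via the decomposition of $a$, and finally $\e$ small enough that both Lemma~\ref{L:AL4} (or \ref{L:AL5}) applies with the chosen $\nu'$ and $\e<1/(2R)$, delivers the desired estimate.

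The main obstacle is the sub-case $n=0$, $\oo=1$ of Lemma~\ref{L:AL5}, which only controls the \emph{difference} $\pi_N^\e B_{0,m}^0(f)[\cdot]-B_{0,0}[\pi_N^\e\,\cdot\,]$; the stray Hilbert transform term cannot be absorbed into the small parameter $\nu'$, so one must use the decay of $a$ at infinity via the splitting $a=a_1+a_2$ precisely in order to render this $B_{0,0}$-remainder small as well.
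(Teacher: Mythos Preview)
Your argument is correct. The paper does not prove this lemma in the text but cites \cite[Lemma~D.6]{MP2021}, so a line-by-line comparison is not possible. Your density splitting $a=a_1+a_2$ with $a_1\in C_c^\infty(\R)$ and $\|a_2\|_{H^{s-1}}<\delta$ is a clean way to exploit the decay of $a$ at infinity, and your treatment of the extra $B_{0,0}$-term in the sub-case $n=0$, $\oo=1$ is accurate. One remark: you need not make $\nu'$ small; any fixed value (say $\nu'=1$) suffices, since the smallness in front of $\|\pi_N^\e h\|_{H^{s-1}}$ comes entirely from $\delta$.

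An alternative route, closer in spirit to how the paper handles the analogous localizations (cf.\ Step~1 in the proof of Theorem~\ref{T:AP}), avoids the splitting: introduce an auxiliary cutoff $\chi_N^\e$ with $\chi_N^\e=1$ on $\supp\pi_N^\e$ and $\supp\chi_N^\e\subset\{|\xi|\geq 1/\e-\e\}$, write $\pi_N^\e(ah)=(\chi_N^\e a)(\pi_N^\e h)$, and apply the product estimate~\eqref{MES}. The factor $\|\chi_N^\e a\|_\infty$ is small for small $\e$ since $a\in H^{s-1}(\R)\hookrightarrow C_0(\R)$, while the companion term carries $\|\pi_N^\e h\|_\infty\leq C\|h\|_{H^{s'-1}}$ and is absorbed into $K\|h\|_{H^{s'-1}}$; the $B_{0,0}$-contribution is handled identically. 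Both approaches yield the same estimate; yours trades the auxiliary cutoff for a density argument.
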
  
\begin{proof}
See \cite[Lemma~D.6]{MP2021}.
\end{proof}

We are now in a position to prove Theorem~\ref{T:AP}.

\begin{proof}[Proof of Theorem~\ref{T:AP}] 
 Fix $\mu>0$ and let $\e\in(0,1)$. Let further $\{(\pi_j^\e,\xi_j^\e)\,|\, -N+1\leq j\leq N\}$  be a finite $\e$-localization family. 
We choose a second family  $\{\chi_j^\e\,|\, -N+1\leq j\leq N\}$ with the following properties:
\begin{align*}
\bullet\,\,\,\, \,\,  &\text{$\chi_j^\e\in{\rm C}^\infty(\mathbb{R},[0,1])$ and $\chi_j^\e=1$ on $\supp \pi_j^\e$, $-N+1\leq j\leq N$;} \\[1ex]
\bullet\,\,\,\, \,\,  &\text{$\supp \chi_j^\e$ is an interval  of length $3\e$, $|j|\leq N-1$, and $\supp\chi_N^\e\subset \{|x|\geq 1/\e-\e\}$.}  
\end{align*} 

In the arguments that follow we repeatedly use the  estimate
\begin{align}\label{MES}
\|gh\|_{H^{s-1}}\leq  C(\|g\|_\infty\|h\|_{H^{s-1}}+\|h\|_\infty\|g\|_{H^{s-1}})
\end{align} 
which holds for $g,\, h\in H^{s-1}(\mathbb{R})$ and $s\in(3/2,2)$,  with a constant $C$ independent of $g$ and $h$.

Below we denote by~$C$ constants that do not depend on~$\e$ and by~$K$ constants that may  depend on~$\e$.
We need to approximate the linear operators~${\big[f\mapsto\kB_2(\tau)[f]-\tau f_0'\kB_1(\tau)[f]\big]}$ and~${[f\mapsto \beta_0^1f']}$, see~\eqref{PSI}-\eqref{betatau},
where we set~$\kB(\tau)=:(\kB_1(\tau),\kB_2(\tau))^\top$.                                  
The proof  is divided in several steps.
\medskip

\noindent{\em Step 1.}  We  consider the operator $[f\mapsto \beta_0^1f']$.
 Since $\chi_j^\e\pi_j^\e=\pi_j^\e$,~\eqref{MES}  yields
 \begin{align*}
     \|\pi_j^\e(\beta_0^1 f')-\beta_0^1(\xi_j^\e)(\pi_j^\e f)'\|_{H^{s-1}}&\leq C\|\chi_j^\e(\beta_0^1 -\beta_0^1(\xi_j^\e))\|_\infty\|(\pi_j^\e f)'\|_{H^{s-1}}+K\|f\|_{H^{ s'}}
 \end{align*}
 for $|j|\leq N-1$ and
 \begin{align*}
    \|\pi_N^\e(\beta_0^1 f')\|_{H^{s-1}}&\leq C\|\chi_N^\e\beta_0^1\|_\infty\|(\pi_N^\e f)'\|_{H^{s-1}}+K\|f\|_{H^{ s'}}. 
 \end{align*}
 From \eqref{regBeta} we have $\beta_0^1\in{\rm C}^{s-3/2}(\R)$ and $\beta_0^1(\xi)\to 0$ for $|\xi|\to\infty$. 
 Hence, if $\e$ is sufficiently small, then
 \begin{equation}\label{aa1}
 \begin{aligned}
 &\|\pi_j^\e(\beta_0^1 f')-\beta_0^1(\xi_j^\e)(\pi_j^\e f)'\|_{H^{s-1}}\leq \frac{\mu|\mu_+-\mu_-|}{3}\|\pi_j^\e f\|_{H^s}+K\|f\|_{H^{s'}},\qquad |j|\leq N-1,\\[1ex]
 &\|\pi_N^\e(\beta_0^1 f')\|_{H^{s-1}}\leq \frac{\mu|\mu_+-\mu_-|}{3}\|\pi_N^\e f\|_{H^s}+K\|f\|_{H^{s'}}.
 \end{aligned}
 \end{equation}

The approximation procedure for~$\big[f\mapsto\kB_2(\tau)[f]-\tau f_0'\kB_1(\tau)[f]\big]$ is more involved. \medskip

\noindent{\em Step 2.} We prove there exists a constant $C_\kB$ such that
\begin{align}\label{COM}
\|\pi_j^\e\kB(\tau)[f]\|_{H^{s-1}}\leq C_\kB\|\pi_j^\e f\|_{H^s}+K\|f\|_{H^{s'}}
\end{align}
for all $-N+1\leq j\leq N$, $\tau\in[0,1],$ and $f\in H^s(\R)$.
To start, we infer from \eqref{betatau} that 
\begin{equation}\label{COM10}
\begin{aligned}
(1+2\tau a_{\mu}\bD(f_0))[\pi_j^\e\kB(\tau)[f]]&=2a_\mu\pi_j^\e\p G(\tau f_0)[f]-2\tau a_\mu\pi_j^\e\p\bD(f_0)[f][\beta_0]\\[1ex]
&\hspace{0,45cm}+2\tau a_{\mu}\big(\bD(f_0)[\pi_j^\e\kB(\tau)[f]]-\pi_j^\e\bD(f_0)[\kB(\tau)[f]]\big).
\end{aligned}
\end{equation}
To estimate the terms on the right, we use the representations and estimates \eqref{pDE1}--\eqref{pPsi3} together with the commutator estimate from Lemma \ref{L:AL1} 
and the $H^{s-1}$-estimate for the operators~$B_{m,n}$ provided in Lemma \ref{L:MP0}~(ii).
 So we get
\begin{align}\label{COM12}
\|\pi_j^\e\p G(\tau f_0)[f]\|_{H^{s-1}}+\|\pi_j^\e\p\bD(f_0)[f][\beta_0]\|_{H^{s-1}}
\leq C\|\pi_j^\e f\|_{H^s}+ K\|f\|_{H^{s'}},
\end{align}
and similarly, using \eqref{DFB} and \eqref{normB} with $s$ replaced by $s'$, 
\begin{align}\label{COM11}
\|\bD(f_0)[\pi_j^\e\kB(\tau)[f]]-\pi_j^\e\bD(f_0)[\kB(\tau)[f]]\|_{H^{s-1}}\leq K\|\kB(\tau )[ f]\|_{2}\leq  K\|f\|_{H^{s'}}.
\end{align}
The estimate~\eqref{COM} follows now from~\eqref{COM10}--\eqref{COM11} and Theorem~\ref{T:isom2}.\medskip

\noindent{\em Step 3.} Given $\tau\in[0,1]$ and $ -N+1\leq j\leq N$,  let $\bB_{j,\tau}\in\kL(H^{s}(\R)^2, H^{s-1}(\R)^2)$ denote the Fourier multipliers
\begin{align*}
\bB_{j,\tau}:=\frac{a_\mu\sigma}{2\pi}
\begin{pmatrix}
a_1(\tau f_0)(\xi_j^\e)B_{0,0}\circ(d/d\xi)\\[1ex]
-a_2(\tau f_0)(\xi_j^\e)B_{0,0}\circ(d/d\xi)
\end{pmatrix},\quad |j|<N,
\text{ and }
\bB_{N,\tau}:=\frac{a_\mu\sigma}{2\pi}
\begin{pmatrix}
0\\[1ex]
-B_{0,0}\circ(d/d\xi)
\end{pmatrix}.
\end{align*}
We next prove that given $\nu>0$,   we have
\begin{align}\label{ESt2} 
\|\pi_j^\e \kB(\tau)[f]-\bB_{j,\tau}[\pi_j^\e f]\|_{H^{s-1}}\leq \nu\|\pi_j^\e f\|_{H^{s}}+K\|f\|_{H^{s'}}
\end{align}
for all $  -N+1\leq j\leq N$, $\tau\in[0,1],$    $f\in H^s(\mathbb{R})$  and all   sufficiently small $\e$.
To start, we multiply \eqref{betatau} by $\pi_j^\e$  and get
 \begin{align}\label{betatau'}
\pi_j^\e\kB(\tau)[f]
=2a_\mu\pi_j^\e\big[\p G(\tau f_0)[f]-\tau\big(\bD(f_0)[\kB(\tau)[f]]
+\p\bD(f_0)[f][\beta_0]\big)\big]
\end{align}
We  consider the terms on the right hand side of~\eqref{betatau'} one by one. To deal with the first term we recall \eqref{pPsi1}-\eqref{pPsi3}.
 Repeated use of Lemma~\ref{L:AL2} and Lemma~\ref{L:AL3} then shows that 
 \begin{equation}\label{T2n-1}
\|2a_\mu\pi_j^\e\p G(\tau f_0)[f]-\bB_{j,\tau}[\pi_j^\e f]\|_{H^{s-1}}\leq \frac{\nu}{3}\|\pi_j^\e f\|_{H^{s}}+K\|f\|_{H^{s'}}
\end{equation}
for $|j|\leq N-1$,  while  Lemma~\ref{L:AL4}, Lemma~\ref{L:AL5}, and Lemma~\ref{L:AL6}  yield
\begin{equation}\label{T2n}
\|2a_\mu\pi_N^\e\p G(\tau f_0)[f]-\bB_{N,\tau}[\pi_N^\e f]\|_{H^{s-1}}\leq  \frac{\nu}{3}\|\pi_N^\e f\|_{H^{s}}+K\|f\|_{H^{s'}}
\end{equation}
provided that $\e$ is sufficiently small.

We estimate the second term on the right of \eqref{betatau'} and let $|j|\leq N-1$ first.
Combining~\eqref{DFB},  Lemma  \ref{L:AL2}, Lemma~\ref{L:AL3},  \eqref{normB}  with $s$ replaced by $s'$, and \eqref{COM}     we obtain
\begin{equation}\label{T1n-1}
\begin{aligned}
&\|\pi_j^\e\bD(f_0)[\kB(\tau)[f]]\|_{H^{s-1}}\\[1ex]
&\leq\Bigg\|\pi_j^\e\begin{pmatrix}
B_{0,2}^0&B_{1,2}^0\\[1ex]
B_{1,2}^0&B_{2,2}^0
\end{pmatrix}
\begin{pmatrix}
  f_0'\kB_1(\tau)[f]\\[1ex]
  f_0'\kB_2(\tau)[f]
\end{pmatrix}\\[1ex]
&\hspace{1cm}-\frac{f_0'(\xi_j^\e)}{(1+f_0'^2(\xi_j^\e))^2}
\begin{pmatrix}
1&f_0'(\xi_j^\e)\\[1ex]
f_0'(\xi_j^\e)&f_0'^2(\xi_j^\e)
\end{pmatrix}
\begin{pmatrix}
B_{0,0}[\pi_j^\e \kB_1(\tau)[f]]\\[1ex]
B_{0,0}[\pi_j^\e \kB_2(\tau)[f]]
\end{pmatrix}\Bigg\|_{H^{s-1}}\\[2ex]
&\hspace{0,45cm}+\Bigg\|\pi_j^\e\begin{pmatrix}
B_{1,2}^0&B_{2,2}^0\\[1ex]
B_{2,2}^0&B_{3,2}^0
\end{pmatrix}
\begin{pmatrix}
\kB_1(\tau)[f]\\[1ex]
\kB_2(\tau)[f]
\end{pmatrix}\\[1ex]
&\hspace{1.45cm}-\frac{f_0'(\xi_j^\e)}{(1+f_0'^2(\xi_j^\e))^2}
\begin{pmatrix}
1&f_0'(\xi_j^\e)\\[1ex]
f_0'(\xi_j^\e)&f_0'^2(\xi_j^\e)
\end{pmatrix}
\begin{pmatrix}
B_{0,0}[\pi_j^\e \kB_1(\tau)[f]]\\[1ex]
B_{0,0}[\pi_j^\e \kB_2(\tau)[f]]
\end{pmatrix}\Bigg\|_{H^{s-1}}\\[1ex]
&\leq  \frac{\nu}{6|a_\mu|}\|\pi_j^\e f\|_{H^{s}}+K\|f\|_{H^{s'}}
\end{aligned}
\end{equation}
provided that $\e$ is sufficiently small.
Similarly, if $j=N$, then Lemma~\ref{L:AL5}, Lemma~\ref{L:AL6}, \eqref{normB}  with $s$ replaced by $s'$, and \eqref{COM}  imply that
\begin{equation}\label{T1n}
\|\pi_N^\e\bD(f_0)[\kB(\tau)[f]]\|_{H^{s-1}}\leq  \frac{\nu}{6|a_ \mu|}\|\pi_N^\e f\|_{H^{s}}+K\|f\|_{H^{s'}}
\end{equation} 
provided that $\e$ is sufficiently small.

 It remains to consider the term $\pi_j^\e\p\bD(f_0)[f][\beta_0]$ on the right of~\eqref{betatau'}. To this end we argue similarly as in the proof of \eqref{T1n-1} by adding and 
 subtracting suitable localization operators.
Recalling \eqref{pDE1}-\eqref{pDE3}, we get from Lemma~\ref{L:AL2} and Lemma~\ref{L:AL3} if $|j|\leq N-1$, respectively from   Lemma~\ref{L:AL4} and Lemma~\ref{L:AL6} if~${j=N}$, that 
\begin{equation}\label{T3n-1}
\|\pi_j^\e\p\bD(f_0)[f][\beta_0]\|_{H^{s-1}}\leq \frac{\nu}{6|a_\mu|}\|\pi_j^\e f\|_{H^{s}}+K\|f\|_{H^{s'}}
\end{equation}
provided that $\e$ is sufficiently small.
The estimate \eqref{ESt2} follows now from \eqref{betatau'}-\eqref{T3n-1}.\medskip

\noindent{\em Step 4.} We are now in a position to  localize the operators~$\big[f\mapsto\kB_2(\tau)[f]-\tau f_0'\kB_1(\tau)[f]\big]$.
The estimate \eqref{ESt2} shows that, choosing $\e$ sufficiently small, we have
\begin{align}\label{ESt3an-1}
\Big\|\pi_j^\e \kB_2(\tau)[f]+\frac{a_\mu\sigma}{2\pi}a_2(\tau f_0)(\xi_j^\e)B_{0,0}[(\pi_j^\e f)']\Big\|_{H^{s-1}}\leq  \frac{\mu|\mu_+-\mu_-|}{3}\|\pi_j^\e f\|_{H^{s}}+K\|f\|_{H^{s'}}
\end{align}
for $|j|\leq N-1$ and
\begin{align}\label{ESt3an}
\Big\|\pi_N^\e (\kB_2(\tau)[f]+\frac{a_\mu\sigma}{2\pi}B_{0,0}[(\pi_N^\e f)']\Big\|_{H^{s-1}}\leq  \frac{\mu|\mu_+-\mu_-|}{3}\|\pi_N^\e f\|_{H^{s}}+K\|f\|_{H^{s'}}.
\end{align}

Moreover, for $|j|\leq N-1$, we write in view of $\chi_j^\e\pi_j^\e=\pi_j^\e$
\begin{equation*}
\begin{aligned}
&\hspace{-0.5cm}\Big\|  \pi_j^\e f_0'\kB_1(\tau)[f]-\frac{a_\mu\sigma}{2\pi}f_0'(\xi_j^\e)a_1(\tau f_0)(\xi_j^\e)B_{0,0}[(\pi_j^\e f)']\Big\|_{H^{s-1}}\\[1ex]
&\leq \|   \chi_j^\e (f_0'-f_0'(\xi_j^\e))\pi_j^\e \kB_1(\tau)[f]\|_{H^{s-1}}\\[1ex]
&\hspace{0.45cm}+C\Big\|\pi^\e_j\kB_1(\tau)[f]-\frac{a_\mu\sigma}{2\pi}a_1(\tau f_0)(\xi_j^\e)B_{0,0}[(\pi_j^\e f)']\Big\|_{H^{s-1}}.
\end{aligned}
\end{equation*}
The first term on the right hand side may be estimated by using \eqref{normB} (with~$s$ replaced by~$s'$),~\eqref{MES}, \eqref{COM}, and the fact that  $f_0'\in{\rm C}^{s-3/2}(\R)$.
For the second term  we rely on~\eqref{ESt2}. 
Hence, if $\e$ is sufficiently small then
\begin{equation}\label{ESt3bn-1}
\begin{aligned}
&\hspace{-0.5cm}\Big\|  \pi_j^\e f_0'\kB_1(\tau)[f]-\frac{a_\mu\sigma}{2\pi}f_0'(\xi_j^\e)a_1(\tau f_0)(\xi_j^\e)B_{0,0}[(\pi_j^\e f)']\Big\|_{H^{s-1}} \\[1ex]
&\leq \frac{\mu|\mu_+-\mu_-|}{3}\|\pi_j^\e f\|_{H^{s}}+K\|f\|_{H^{s'}}.
\end{aligned}
\end{equation}
For $j=N$, it follows from \eqref{normB} (with~$s$ replaced by $s'$), \eqref{MES}, \eqref{COM}, and the fact that~$f_0' $ vanishes at infinity that 
\begin{equation}\label{ESt3bn}
\| \pi_N^\e f_0'\kB_1(\tau)[f]\|_{H^{s-1}} \leq \frac{\mu|\mu_+-\mu_-|}{3}\|\pi_N^\e f\|_{H^{s}}+K\|f\|_{H^{s'}}.
\end{equation}
The desired claim \eqref{D1} follows now from \eqref{PSI}, \eqref{aa1}, \eqref{ESt3an-1}, and \eqref{ESt3bn-1} if $|j|\leq N-1$, respectively from~\eqref{PSI}, \eqref{aa1}, \eqref{ESt3an}, and \eqref{ESt3bn} if $j=N.$
\end{proof}

We now investigate the Fourier multipliers $\bA_{j,\tau} $ found in Theorem \ref{T:AP}. We recall  the definitions \eqref{defa12}, \eqref{defalphabeta}, and \eqref{defAjtau} and observe that as 
$f_0', \, \beta_0^1, a_i(\tau f_0)\in H^{s-1}(\mathbb{R})$, $i=1,\,2$ and~$\tau\in[0,1]$, there is a constant $\eta\in(0,1)$ such that 
\[
\eta\leq \alpha_\tau\leq \frac{1}{\eta}\quad\text{and}\quad |\beta_\tau|\leq \frac{1}{\eta},\qquad\tau\in[0,1].
\]
Based on this, it can be shown as in  \cite[Proposition 4.3]{MBV19}, that there is 
a constant~${\kappa_0\geq 1}$   such that for all $\e\in(0,1)$, $ -N+1\leq j\leq N$, and $\tau\in[0,1]$ we have
 \begin{align}
\bullet &\quad \text{$\lambda-\bA_{j,\tau}\in \kL(H^s(\mathbb{R}),H^{s-1}(\mathbb{R}))$ is an isomorphism for all $\re\lambda\geq 1,$}\label{L:FM1}\\[1ex]
\bullet &\quad  \kappa_0\|(\lambda-\bA_{j,\tau})[f]\|_{H^{s-1}}\geq |\lambda|\cdot\|f\|_{H^{s-1} }+\|f\|_{H^s}, \qquad f\in H^{s}(\mathbb{R}),\, \re\lambda\geq 1\label{L:FM2}.
\end{align}

The  properties \eqref{L:FM1}-\eqref{L:FM2} together  with Theorem~\ref{T:AP}  enable us to prove Theorem~\ref{T:GP}.

\begin{proof}[Proof of Theorem \ref{T:GP}]
Let $s'\in(3/2,s)$ and  let $\kappa_0\geq1$ be the constant   in~\eqref{L:FM2}. 
  Theorem~\ref{T:AP} with $\mu:=1/2\kappa_0 $ implies that there are $\e\in(0,1)$, a constant $K=K(\e)>0$
 and  bounded operators $\bA_{j,\tau}\in\kL(H^s(\mathbb{R}), H^{s-1}(\mathbb{R}))$, $ -N+1\leq j\leq N$ and $\tau\in[0,1],$ satisfying 
 \begin{equation*} 
  2\kappa_0\|\pi_j^\e\Psi(\tau )[f]-\bA_{j,\tau}[\pi^\e_j f]\|_{H^{s-1}}\leq \|\pi_j^\e f\|_{H^{s}}+2\kappa_0 K\|  f\|_{H^{s'}},\qquad f\in H^s(\mathbb{R}).
 \end{equation*}
Moreover,  \eqref{L:FM2}  yields
  \begin{equation*} 
    2\kappa_0\|(\lambda-\bA_{j,\tau})[\pi^\e_jf]\|_{H^{s-1}}\geq 2|\lambda|\cdot\|\pi^\e_jf\|_{H^{s-1}}+ 2\|\pi^\e_j f\|_{H^s}
 \end{equation*}
 for all $-N+1\leq j\leq N$, $\tau\in[0,1],$  $\re \lambda\geq 1$, and  $f\in H^s(\mathbb{R})$.
The latter  estimates combined lead us to
 \begin{align*}
   2\kappa_0\|\pi_j^\e(\lambda-\Psi(\tau ))[f]\|_{H^{s-1}}
   \geq& 2\kappa_0\|(\lambda-\bA_{j,\tau})[\pi^\e_jf]\|_{H^{s-1}}-2\kappa_0\|\pi_j^\e\Psi(\tau)[f]-\bA_{j,\tau}[\pi^\e_j f]\|_{H^{s-1}}\\[1ex]
   \geq& 2|\lambda|\cdot\|\pi^\e_j f\|_{H^{s-1}}+ \|\pi^\e_j f\|_{H^s}-2\kappa_0K\|  f\|_{H^{s'}}.
 \end{align*}
Summing  over $j$, we deduce from \eqref{EQNO}, Young's inequality, and the interpolation property~\eqref{IP}  that
 there exist constants  $\kappa\geq1$  and~$\omega>1 $ such that 
  \begin{align}\label{KDED}
   \kappa\|(\lambda-\Psi(\tau ))[f]\|_{H^{s-1}}\geq |\lambda|\cdot\|f\|_{H^{s-1}}+ \| f\|_{H^s}
 \end{align}
for all   $\tau\in[0,1],$   $\re \lambda\geq \omega$, and  $f\in H^s(\mathbb{R})$.

From \eqref{FMPsi0} we also deduce that  $\omega-\Psi(0) \in \kL(H^s(\mathbb{R}), H^{s-1}(\mathbb{R}))$ is an isomorphism.
This together with method of continuity \cite[Proposition I.1.1.1]{Am95} and  \eqref{KDED}  implies  that also 
\begin{align}\label{DEDK2}
   \omega-\Psi(1)=\omega-\p\Phi(f_0)\in\kL(H^s(\mathbb{R}), H^{s-1}(\mathbb{R})) 
 \end{align}
 is an isomorphism.
The estimate \eqref{KDED} (with $\tau=1$) and \eqref{DEDK2}  finally imply that  $\p\Phi(f_0)$ generates an analytic semigroup 
in $\kL(H^{s-1}(\R))$, cf. \cite[Chapter~I]{Am95}, and the proof is complete.
\end{proof}

  We are now in a position to prove the main result, for which we can exploit abstract theory for fully nonlinear parabolic problems from \cite{L95}.

 \begin{proof}[Proof of Theorem~\ref{MT1}]
{\em  Well-posedness:} Given $\alpha\in(0,1)$,   $T>0$, and a Banach space $X$  we set
 \begin{align*}
 {\rm C}^{\alpha}_{\alpha}((0,T], X):=\Big\{f:(0,T]\longrightarrow X\,\Big|\,\|f\|_{C_\alpha^\alpha}:=\sup_t\|f(t)\|+\sup_{s\neq t}\frac{\|t^\alpha f(t)-s^\alpha f(s)\|}{|t-s|^\alpha}<\infty\Big\}.
 \end{align*}
The property \eqref{REGphi} together with  Theorem \ref{T:GP} shows  that the assumptions of \cite[Theorem~8.1.1]{L95} 
are  satisfied for the evolution problem~\eqref{NNEP}.
According to this theorem, \eqref{NNEP} has, for each~${f_0\in H^{s}(\mathbb{R})}$, a local solution $f(\cdot;f_0)$ such that
\[ f\in {\rm C}([0,T],H^{s}(\mathbb{R}))\cap {\rm C}^1([0,T], H^{s-1}(\mathbb{R}))\cap {\rm C}^{\alpha}_{\alpha}((0,T], H^s(\mathbb{R})),\] 
where   $T=T(f_0)>0$ and   $\alpha\in(0,1)$ is fixed (but arbitrary).
This solution is unique within the set
\[
  \bigcup_{\alpha\in(0,1)}{\rm C}^{\alpha}_{\alpha}((0,T],H^s(\mathbb{R})) \cap {\rm C}([0,T],H^{s}(\mathbb{R}))\cap {\rm C}^1([0,T], H^{s-1}(\mathbb{R})).
 \]
We improve the uniqueness property by showing that  the solution is unique within 
$${\rm C}([0,T],H^{s}(\mathbb{R}))\cap {\rm C}^1([0,T], H^{s-1}(\mathbb{R})).$$
Indeed,  let $f$ now be any solution to \eqref{NNEP} in that space, let $s'\in (3/2,s)$ be fixed and set~${\alpha:= s-s'\in(0,1)}$. 
 Using  \eqref{IP}, we find a constant~$C>0$ such that
 \begin{equation}\label{BO}
\|f(t_1)-f(t_2)\|_{H^{s'}}   \leq C|t_1-t_2|^\alpha,\qquad t_1,\, t_2\in[0, T],
 \end{equation}
which shows in particular that $f \in   {\rm C}^{\alpha}_{\alpha}((0,T], H^{s'}(\mathbb{R}))$.
The uniqueness statement of\cite[Theorem 8.1.1]{L95}  applied in the context of~\eqref{NNEP} with 
$\Phi\in {\rm C}^{\infty}(H^{s'}(\mathbb{R}), H^{s'-1}(\mathbb{R}))$
 establishes the uniqueness claim.
This unique solution can be extended up to a maximal existence time~${T_+(f_0)}$, see \cite[Section 8.2]{L95}. 
Finally, \cite[Proposition 8.2.3]{L95} shows that the solution map defines  a semiflow on $H^s(\mathbb{R})$ which, according to \cite[Corollary 8.3.8]{L95}, is smooth in the open set~$\{(t,f_0)\,|\, 0<t<T_+(f_0)\}$.
This proves~(i). \medskip

\noindent{\em  Parabolic smoothing:} The uniqueness result established in (i) enables us  to 
use a parameter trick   applied also to other problems, cf., e.g., \cite{An90, ES96, PSS15, MBV19}, in order to establish~(iia) and~(iib).
The proof details  are similar to those in \cite[Theorem~1.2~(v)]{MBV18} or \cite[Theorem 2~(ii)]{AM21x} and therefore we omit them.  \medskip

\noindent{\em  Global existence:} We prove the statement by contradiction. 
Assume  there exists a maximal solution   
$f\in {\rm C}([0,T_+),H^{s}(\mathbb{R}))\cap {\rm C}^1([0,T_+), H^{s-1}(\mathbb{R}))$ to \eqref{NNEP} with $T_+<\infty$ and such that  
\begin{align}\label{BOUN1}
\sup_{[0,T_+)} \|f(t)\|_{H^s}<\infty.
\end{align}
Recalling that $\Phi$   maps bounded sets in~$H^s(\R)$ to bounded sets in~$H^{s-1}(\R) $, we get
 \begin{align}\label{BOUN2}
\sup_{t\in [0,T_+)}\Big\| \frac{df}{dt}(t)\Big\|_{H^{s-1}}=\sup_{t\in[0,T_+)}\|\Phi(f(t))\|_{H^{s-1}}<\infty.
\end{align} 
Let $s'\in(3/2,s)$ be fixed. 
Arguing as above, see \eqref{BO}, from the bounds \eqref{BOUN1} and~\eqref{BOUN2} we get that ${f:[0,T_+)\longrightarrow H^{s'}(\mathbb{R})}$ is uniformly continuous. 
Applying \cite[Theorem 8.1.1]{L95} to~\eqref{NNEP} with $\Phi\in {\rm C}^\infty(H^{s'}(\mathbb{R}), H^{s'-1}(\mathbb{R}))$, we may extend the solution $f$   
to  a  time  interval~$[0,T_+')$ with~${T_+<T_+'}$ and such that 
 \[
   f\in {\rm C}([0,T_+'),H^{s'}(\mathbb{R}))\cap {\rm C}^1([0,T_+'), H^{s'-1}(\mathbb{R})).
 \]  
 Since by (iib) (with $s$ replaced by $s'$)  we have  $f\in {\rm C}^1((0,T_+'),H^{s}(\mathbb{R}))$, this contradicts  the maximality property of $f$ and the proof is complete.
  \end{proof}

 \appendix
 
 \section{\label{neargamma}The hydrodynamic double-layer potential near $\Gamma$}
 
Given  $f\in H^3(\RRM)$ and $\beta\in H^2(\RRM)$, we  let  $(w,q)$  be given by \eqref{defw1} and \eqref{defq1}.  
 We recall the definitions \eqref{defD} of $\bD(f)$ and \eqref{defB0} of the operators $B^0_{n,m}$.
 \begin{lemma}\label{l:neargamma}
 We have $w^\pm\in {\rm C}^1(\ov{\Omega^\pm},\RRM^2)$, $q^\pm\in {\rm C}(\ov{\Omega^\pm})$, $w^\pm|_\Gamma\circ\Xi\in H^2(\RRM)^2$, and
 \be\label{jumpwq}
 \left.\begin{array}{rcll}
 w^\pm&=&\displaystyle\Big(-\bD(f)[\beta]\pm\frac{1}{2}\beta\Big)\circ\Xi^{-1}&
 \quad\mbox{\rm on $\Gamma$,}\\[1ex]{}
 [T_1(w,q)](\nu\circ\Xi^{-1})&=&0&\quad\mbox{\rm on $\Gamma$.}
 \end{array}\right\}
 \ee
 \end{lemma}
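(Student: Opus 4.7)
My plan is to handle the lemma in two stages: the jump formula together with the boundary regularity of $w$ and $q$, followed by the continuity of the normal stress, which I expect to be the main technical obstacle.

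For the jump and regularity of $w$, I would localize the analysis around an arbitrary target point $x_0=\Xi(\xi_0)\in\Gamma$ by cutting $\Gamma=\Gamma_{\rm near}\cup\Gamma_{\rm far}$ with $\Gamma_{\rm near}$ a compact arc containing $x_0$. The contribution of $\Gamma_{\rm far}$ to $(w,q)$ is smooth in $x$ in a neighborhood of $x_0$, so it passes to the limit $x\to x_0$ without any jump, and the $\Gamma_{\rm near}$-contribution reduces the question to the classical double-layer jump relation on a compact curve. Noting that $\clw^{i,k}_j$ is the stress tensor of the Stokes fundamental solution $(\clu^k,\clp^k)$ from \eqref{fundUP}, so that $\partial_i\clw^{i,k}_j=-\delta_{jk}\delta_0$ distributionally, the divergence theorem on $\Omega^\pm\cap U\setminus B_\varepsilon(x)$ combined with the homogeneity of $\clw^{i,k}_j$ on the small sphere yields the one-sided limits
\[w_j^\pm(x_0)=\pm\tfrac{1}{2}\beta_j(\xi_0)+\PV\int_\Gamma\clw^{i,k}_j(x_0-y)\tilde\nu_i\beta_k\,d\Gamma_y,\]
and a direct comparison of \eqref{defD} with \eqref{defw1} identifies the principal value as $-\bD(f)[\beta]_j(\xi_0)$, which gives the first line of \eqref{jumpwq}. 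The $H^2(\R)^2$-regularity of $w^\pm|_\Gamma\circ\Xi$ is then immediate from this formula, the expansion \eqref{DFB}, and Corollary~\ref{C:1}, which provides $\bD(f)\in\kL(H^2(\R)^2)$ whenever $f\in H^3(\R)$.

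For the $C^1$-regularity of $w^\pm$ up to $\overline{\Omega^\pm}$, interior smoothness is automatic since $(w^\pm,q^\pm)$ solves the Stokes system there. Near $\Gamma$, I would differentiate \eqref{defw1} under the integral sign, obtaining degree $-2$ homogeneous kernels, and then shift the derivative off the kernel via $\partial_{x_l}\clw^{i,k}_j(x-y)=-\partial_{y_l}\clw^{i,k}_j(x-y)$ followed by a tangential integration by parts on $\Gamma$; this reduces matters to a jump analysis for an integral of the same structure but with density in $H^1(\R)^2$ assembled from $\beta$ and $\beta'$. The continuity of $q$ up to $\Gamma$ is obtained from the reduced representation $q(x)=\pi^{-1}\int_\R|r|^{-2}(-r_2,r_1)\beta'\,ds$ derived in the proof of Proposition~\ref{auxpropbeta}, which exhibits $q$ as a harmonic-type single-layer potential of $\beta'\in H^1(\R)^2$, for which continuity up to the boundary is classical.

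The identity $[T_1(w,q)]\tilde\nu=0$ is the heart of the matter and encodes the classical continuity of traction across the surface generating a Stokes double layer, dual to the continuity of velocity for a single layer. My plan is to establish it via a Betti reciprocity argument: given any smooth Stokes pair $(\tilde u,\tilde\Pi)$ in $\Omega^\pm$ with sufficient decay at infinity, integrating the symmetric Green identity over $\Omega^+$ and $\Omega^-$ and using that both $(w,q)$ and $(\tilde u,\tilde\Pi)$ solve the Stokes system yields, after the boundary terms at infinity cancel by decay,
\[\int_\Gamma[T_1(w,q)\tilde\nu]\cdot\tilde u\,d\Gamma=\int_\Gamma[w]\cdot T_1(\tilde u,\tilde\Pi)\tilde\nu\,d\Gamma.\]
Choosing $(\tilde u,\tilde\Pi)$ as a hydrodynamic single-layer potential with arbitrary density, so that $\tilde u$ is continuous across $\Gamma$ while its traction spans a dense subset of $L_2(\Gamma)^2$ by the standard single-layer jump relations, the right-hand side becomes explicit in terms of the already-established jump $[w]=\beta\circ\Xi^{-1}$, and duality then forces $[T_1(w,q)]\tilde\nu$ to vanish. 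The principal difficulty will be the rigorous justification of these integrations by parts at the borderline regularity $(w,q)\in C^1(\overline{\Omega^\pm})\times C(\overline{\Omega^\pm})$; I would handle this by a density argument, approximating $\beta\in H^2(\R)^2$ by smooth compactly supported densities $\beta_n$, verifying the traction-continuity identity for each $\beta_n$ by a direct smooth computation, and passing to the limit using the continuity of $[\beta\mapsto(w,q)]$ implicit in \eqref{DFB} and Corollary~\ref{C:1}.
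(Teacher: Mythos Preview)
Your overall strategy is sound but follows a markedly different route from the paper, and two of your steps would need substantial tightening.

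The paper works entirely with explicit kernels. It introduces $\clz_j(y)=y_1^{3-j}y_2^j/|y|^4$ and the associated integrals $Z_j[\phi](x)=\int_\R\clz_j(r)\phi\,ds$, writes $w$ as a $2\times 2$ matrix of the $Z_j$ acting on $\beta$ and $f'\beta$, and quotes the one-sided boundary limits of the $Z_j$ from \cite{MP2021} to obtain $w^\pm\in{\rm C}(\overline{\Omega^\pm})$ and \eqref{jumpwq}$_1$ at once. For the gradient and the stress the key step is a set of algebraic identities of the form $\partial_l\clw_j^{i,k}=\pi^{-1}\rot^i\clz_m$ and $\clq^{i,k}=\pi^{-1}\rot^i(\pm\clz_m\pm\clz_{m'})$ for appropriate $m,m'$, where $\rot^1=-\partial_2$, $\rot^2=\partial_1$. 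Combined with the exact integration by parts $\int_\R(\rot^i\clz_m)(r)\nu_i\phi\omega\,ds=Z_m[\phi']$, this expresses every entry of $\nabla w$ and of $T_1(w,q)$ again as a $Z_m$ applied to a differentiated density. The known jump formulae for $Z_m$ then yield $w^\pm\in{\rm C}^1(\overline{\Omega^\pm})$ and $[T_1(w,q)]\tilde\nu=0$ in one stroke, without any reciprocity or duality argument.

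Your plan ``shift $\partial_{x_l}$ to $-\partial_{y_l}$ and integrate tangentially by parts'' hides precisely the content of these $\rot$ identities: $\partial_{y_l}$ is not tangential to $\Gamma$, and the fact that each $\partial_l\clw_j^{i,k}$ happens to be the tangential curl of a merely weakly singular kernel is a specific algebraic feature of the Stokes stresslet that you would have to uncover and verify, not a generic maneuver. Your Betti identity for the traction jump is also imprecise as written: if $(\tilde u,\tilde\Pi)$ is a single-layer potential then $T_1(\tilde u,\tilde\Pi)\tilde\nu$ itself jumps across $\Gamma$, so the right-hand side of your displayed formula is ambiguous; carrying the argument through correctly yields a Calder\'on-type operator identity rather than a direct pointwise conclusion, and in the unbounded graph geometry the cancellation of the boundary terms at infinity requires more than the pointwise decay $w,q\to 0$ established in Proposition~\ref{auxpropbeta}. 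The paper's explicit $Z_j$ approach sidesteps both issues.
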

 \begin{proof}
 For $j=0,\ldots,3$, let $\clz_j\in {\rm C}^1(\RRM^2\setminus\{0\})$ be given by
 \[\clz_j(y):= \frac{y_1^{3-j}y_2^j}{|y|^4},\qquad y\in \RRM^2\setminus\{0\}.\]
 Given $\phi\in H^1(\RRM)$, we define the function $Z_j[\phi]:\R^2\setminus\Gamma\longrightarrow\R$, $j=0,\ldots,3,$  by
 \[
 Z_j[\phi](x):= \int_\RRM\clz_j(r)\phi\,ds,  \quad x\in\RRM^2\setminus\Gamma,\, r:=x-(s,f(s)).
 \]
  Recalling $\eqref{defw2}_1$, we have
 \[w=\frac{1}{\pi}\left(
 -\left(\begin{array}{cc}
 Z_0&Z_1\\
 Z_1&Z_2
 \end{array}\right)[f'\beta]
 +\left(\begin{array}{cc}
 Z_1&Z_2\\
 Z_2&Z_3
 \end{array}\right)[\beta]\right).\]
 
 It is shown in \cite[Lemma A.1]{MP2021} that  $Z_j[\phi]^\pm \in{\rm C}(\ov{\0^\pm})$, with 
 \be\label{jumpZ}
 \begin{pmatrix}
 Z_0[\phi]^\pm\\
 Z_1[\phi]^\pm\\
 Z_2[\phi]^\pm\\
 Z_3[\phi]^\pm
 \end{pmatrix}\circ\Xi=\begin{pmatrix}
 B^0_{ 0,2}(f)[\phi]\\
 B^0_{ 1,2}(f)[\phi]\\
 B^0_{ 2,2}(f)[\phi]\\
 B^0_{ 3,2}(f)[\phi]
 \end{pmatrix}
 \mp\frac{\pi}{2\omega^4}
 \begin{pmatrix}
 {f'}^3+3f'\\
 {f'}^2-1\\
 {f'}^3-f'\\
 -3{f'}^2-1
 \end{pmatrix}\phi.
 \ee
Consequently,  $w^\pm\in {\rm C}(\ov{\Omega^\pm},\RRM^2)$, and the jump relations
 \eqref{jumpZ} imply \eqref{jumpwq}$_1$. 
  Moreover, recalling Corollary~\ref{C:1}, we get $w^\pm|_{\G}\circ\Xi\in H^2(\R)^2$.
Further,~$q^\pm\in{\rm C}(\ov{\0^\pm})$ follows from \cite[Lemma~2.1]{MBV18}.
 
 Exchanging integration with respect to $s$ and differentiation with respect to $x$ by dominated convergence we find from \eqref{defT}, \eqref{defw1}, and \eqref{defq1} that
 \begin{align}
     \p_lw_j(x)&=\int_\RRM\p_l\clw_j^{i,k}(r)\nu_i\beta_k\omega\,ds,
     \label{wtrep1}\\
     (T_1(w,q))_{jl}(x)&=\int_\RRM(-\delta_{jl}\clq^{i,k}+\p_l\clw_j^{i,k}+\p_j\clw_j^{i,k}
     )(r)\nu_i\beta_k\omega\,ds
     \label{wtrep2}
 \end{align}
 for $x\in\RRM^2\setminus\Gamma$ and $  l,\, j=1,\,  2$.
 
 For $E\subset\RRM^2$ open, $\clz\in {\rm C}^1(E)$, $i=1,2$, we define $\rot \clz:= (\rot^1 \clz, \rot^2 \clz)\in {\rm C}(E,\RRM^2)$ by
 \[\rot^i\clz:=
 \left\{\begin{array}{rl}
 -\p_2 \clz&\text{ if $i=1$,}\\
 \p_1\clz&\text{ if $i=2$}.\end{array}\right.\]
 With this notation, we find from integration by parts
 \[\int_\RRM (\rot^i\clz_j)(r)
 \nu_i\phi\omega\,ds=\int_\RRM(f'\p_2\clz_j(r)+\p_1\clz_j(r))\phi\,ds
 =-\int_\RRM\p_s(\clz_j(r))\phi\,ds= Z_j[\phi'].\]
 Together with \eqref{wtrep1}, \eqref{wtrep2}, and the identities
 \begin{align*}
      \p_1\clw_1^{i,1}&=-\p_2\clw_2^{i,1}=-\p_2\clw_1^{i,2}= \frac{1}{\pi}\rot^i\clz_1,\\[1ex]
      \p_1\clw_1^{i,2}&=-\p_2\clw_2^{i,2}= \p_1\clw_2^{i,1}=\frac{1}{\pi}\rot^i\clz_2,\\[1ex]
      \p_2\clw_1^{i,1}&=-\frac{1}{\pi}\rot^i\clz_0,\\[1ex]
      \p_1\clw_2^{i,2}&= \frac{1}{\pi}\rot^i\clz_3
 \end{align*}
 and
 \begin{align*}
      \clq^{i,1}&= \frac{1}{\pi}\rot^i(-\clz_1-\clz_3),\\[1ex]
      \clq^{i,2}&= \frac{1}{\pi}\rot^i(\clz_0+\clz_2),   
 \end{align*}
 this yields $w^\pm\in {\rm C}^1(\ov{\Omega^\pm},\RRM^2)$ and \eqref{jumpwq}$_2$.

 \end{proof}

{\bf Acknowledgements:} The research leading to this paper was carried out while the second author enjoyed the hospitality of DFG Research Training Group 2339
``Interfaces, Complex Structures, and Singular Limits in Continuum Mechanics - Analysis and Numerics'' at the Faculty of Mathematics of Regensburg University.

 \bibliographystyle{siam}
 \bibliography{MP}
\end{document}